\newtheorem{thm}{Theorem}[section]
\newtheorem{lemma}[thm]{Lemma}
\newtheorem{proposition}[thm]{Proposition}
\newtheorem{step}{Step}
\theoremstyle{definition}
\newtheorem{remark}[thm]{Remark}
\newtheorem{parg}[thm]{}
\newcommand{\ph}{\varphi}
\newcommand{\w}{\widetilde}
\newcommand{\la}{\longrightarrow}
\newcommand{\wi}{\widehat}
\newcommand{\pr}{\mathbb{P}}
\newcommand{\Q}{\mathbb{Q}}
\newcommand{\R}{\mathbb{R}}
\newcommand{\N}{\mathcal{N}_1}
\newcommand{\Nu}{\mathcal{N}^1}
\newcommand{\Sing}{\operatorname{Sing}}
\newcommand{\NE}{\operatorname{NE}}
\newcommand{\Exc}{\operatorname{Exc}}
\newcommand{\Lo}{\operatorname{Locus}}
\newcommand{\codim}{\operatorname{codim}}
\newcommand{\Eff}{\operatorname{Eff}}
\title[Fano manifolds with a rational fibration]{On some Fano manifolds admitting a rational fibration}
\author{C.~Casagrande}
\address{Universit\`a di Torino,
Dipartimento di Matematica,
via Carlo Alberto 10,
10123 Torino - Italy}
\email{cinzia.casagrande@unito.it}
\date{December 17, 2013}
\subjclass[2010]{14J45, 14E30}
\begin{document}
\maketitle
\section{Introduction}
Let $X$ be a smooth, complex Fano variety.
We denote by $\N(X)$ the real vector space of one-cycles on $X$, with real coefficients, modulo numerical equivalence; it dimension is the Picard number $\rho_X$ of $X$, which coincides with the second Betti number. 

\renewcommand{\theequation}{\thethm}

For any prime divisor $D\subset X$, let us consider the linear subspace $\N(D,X)$ of $\N(X)$ spanned by classes of curves contained in $D$, and define
$$c(D):=\codim \N(D,X)=\dim\ker\left(H^2(X,\R)\to H^2(D,\R)\right).$$
Notice that if $n\geq 3$ and $D$ is ample, then $c(D)=0$ by Lefschetz Theorem on hyperplane sections.
 
It is a special ``positivity property'' of Fano manifolds that for an arbitrary prime divisor $D$, $c(D)$ cannot be too large; moreover, the presence of a prime divisor $D\subset X$ with $c(D)>0$ has consequences on the geometry of $X$: the larger $c(D)$, the stronger these consequences.

More precisely, let us define
$$c_X:=\max\bigl\{c(D)\,|\,D\text{ a prime divisor in }X\bigr\}.$$
We have $c_X\in\{0,\dotsc,\rho_X-1\}$, and $c_X\geq \rho_X-\rho_D$ for every prime divisor $D\subset X$.
This invariant of $X$ has been introduced in \cite{codim}, and has some remarkable properties.
\begin{thm}[\cite{codim}, Th.~3.3]\label{codim}
Let $X$ be a smooth Fano  variety. We have
 $c_X\leq 8$, and moreover:
\begin{enumerate}[$\bullet$]
\item
if $c_X\geq 4$, then $X\cong S\times Y$, $S$ a Del Pezzo surface with $\rho_S=c_X+1$;
\item
if $c_X=3$, then there is an equidimensional 
fibration in Del Pezzo surfaces\footnote{A fibration in Del Pezzo surfaces is a surjective morphism, with connected fibers, whose general fiber is a Del Pezzo surface.} 
$\psi\colon X\to Y$, where $Y$ is smooth and Fano, and $\rho_X-\rho_Y=4$.
\end{enumerate}
\end{thm}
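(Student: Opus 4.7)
The plan is to fix a prime divisor $D\subset X$ realizing $c(D)=c_X$ and to extract from the large codimension of $\N(D,X)$ a Mori fiber space structure on $X$ with Del Pezzo fibers transverse to $D$; the bound $\rho_F\leq 9$ for a Del Pezzo surface $F$ will then force $c_X\leq 8$.

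I would begin by observing that, because $\N(D,X)$ has codimension $c_X\geq 3$ in $\N(X)$, there must exist extremal rays $R$ of $\NE(X)$ whose classes lie outside $\N(D,X)$; indeed, $\NE(X)$ spans $\N(X)$ by the cone theorem. For any such ray, the Mori contraction $\varphi_R$ cannot have a positive-dimensional fiber contained in $D$, since such a fiber would produce a curve in $D$ whose class lies in $R\not\subset \N(D,X)$. I would then run a $K_X$-MMP contracting only extremal rays whose class lies in $\N(D,X)$: since $-K_X$ is ample, the process terminates at a Mori fiber space $X\dashrightarrow X'\to Y$. By construction the strict transform $D'\subset X'$ of $D$ dominates $Y$, a complement of $\N(D',X')$ in $\N(X')$ is spanned by curve classes coming from a general fiber $F$, and $F$ is a smooth Fano variety with $\rho_F\geq c_X+1$. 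Since the numerical directions transverse to $D'$ must already live inside $F$, dimensional considerations force $\dim F=2$ as soon as $c_X\geq 3$. Thus $F$ is a Del Pezzo surface with $\rho_F\geq c_X+1$, giving $c_X\leq 8$. A bit more work — showing that no small contractions or flips occur in the MMP, and that the base $Y$ is smooth and Fano — then yields the equidimensional fibration claimed in the case $c_X=3$, with $\rho_X-\rho_Y=\rho_F=4$.

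The main obstacle is upgrading this picture to a product decomposition $X\cong S\times Y$ when $c_X\geq 4$. In this regime the fiber $F$ is a Del Pezzo surface with $\rho_F\geq 5$, so it has only finitely many $(-1)$-curves and is rigid in moduli. The strategy is first to show that the birational map $X\dashrightarrow X'$ is an isomorphism (Fanoness of $X$, together with the largeness of $\rho_F$, should rule out small contractions and flips), so that $X\to Y$ is already a smooth morphism with Del Pezzo fibers, and then to promote smoothness to global triviality. For the triviality step one would exploit the rigidity of $F$ together with a splitting $\N(X)\cong\N(F)\oplus\N(Y)$: each $(-1)$-class on $F$ should globalize to a divisor on $X$ because the obstructions to deformation vanish, and the resulting collection of divisors produces a second equidimensional fibration $X\to S$ transverse to $X\to Y$, forcing $X\cong S\times Y$ with $\rho_S=c_X+1$. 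Controlling the monodromy of the family of $(-1)$-curves over $Y$ is the technical heart, and it becomes feasible precisely because $\rho_F\geq 5$ — which is also why only the weaker fibration statement is claimed when $c_X=3$.
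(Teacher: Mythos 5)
The paper does not prove this statement: it is quoted from \cite{codim}, Th.~3.3, whose method (the ``special MMP for $-D$'') is recalled in \S\ref{basic}. Your proposal does not follow that method, and it contains a gap that breaks the argument at its first essential step. You run a MMP contracting only extremal rays whose classes lie in $\N(D,X)$, end with an \emph{elementary} Mori fiber space $X'\to Y$, and then assert that a complement of $\N(D',X')$ is spanned by curve classes of a general fiber $F$. But for an elementary fiber-type contraction every curve in $F$ is contracted, so $\N(F,X')\subseteq\ker f_*$ is one-dimensional, whereas the complement you need has dimension $c_X\geq 3$. Hence $F$ cannot carry the transverse numerical directions, the subsequent deductions ``$\dim F=2$'' and ``$\rho_F\geq c_X+1$'' have no basis, and the bound $c_X\leq 8$ does not follow. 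Two further problems: your restricted MMP can stall, since there need not exist any $K$-negative extremal ray contained in $\N(D_i,X_i)$; and even if it terminates, $D'$ need not dominate $Y$ when the contracted ray lies in $\N(D',X')$ (think of $\pr^1\times\pr^1$ with $D$ a fiber of the projection you contract).

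The actual construction is in a sense dual to yours: one runs a MMP for $-D$, contracting $K$-negative rays $R_i$ with $D_i\cdot R_i>0$, and the whole content lies in the finitely many ``special'' steps where $R_i\not\subset\N(D_i,X_i)$ --- precisely the steps your procedure is designed to avoid. In \cite{codim} it is shown that there are $c(D)$ such steps, each the blow-up of a smooth codimension-two center (producing the exceptional $\pr^1$-bundles described in \S\ref{typea}), and that composing them with the final elementary fiber-type contraction yields, for $c(D)\geq 3$, an equidimensional quasi-elementary fibration with \emph{surface} fibers and relative Picard number $c_X+1$; only then does $\rho_F\leq 9$ for a Del Pezzo surface give $c_X\leq 8$. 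The product decomposition for $c_X\geq 4$ is likewise not obtained by a monodromy argument on $(-1)$-curves, but from the structure theory of quasi-elementary contractions of Fano manifolds in \cite{fanos}. Your final inequality is the right one, but the fibration it is applied to has to be manufactured by the special MMP, not by a single elementary contraction.
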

In this paper we consider the next case, $c_X=2$. We show that 
up to a birational modification given by a sequence of flips, $X$ has either a conic bundle structure, or a fibration in Del Pezzo surfaces.
\begin{thm}\label{main}
Let $X$ be a smooth Fano variety with
 $c_X=2$. Then one of the following holds:
\begin{enumerate}[$(i)$]
\item  there exist a sequence of flips $X\dasharrow X'$  and a conic bundle $f\colon X'\to Y$ where  $X'$ and $Y$ are smooth, $\rho_X-\rho_Y=2$, and $f$ factors through a 
 smooth $\pr^1$-fibration\footnote{A smooth morphism whose fibers are isomorphic to $\pr^1$.} over $Y$;
\item there is an equidimensional fibration in Del Pezzo surfaces $\psi\colon X\to Y$, where $Y$ is factorial, has  canonical singularities, $\codim\Sing(Y)\geq 3$, and $\rho_X-\rho_Y=3$.
\end{enumerate}
\end{thm}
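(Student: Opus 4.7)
My plan is to fix a prime divisor $D$ with $c(D) = 2$ and run an MMP controlled by the two-dimensional quotient $V := \N(X)/\N(D, X)$. The strategy parallels the proof of Theorem \ref{codim}, where $c_X \geq 3$ was strong enough to force a direct fibration on $X$, but now $V$ is too small to preclude intermediate small modifications, so flips must be allowed in case (i).

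First, I would use techniques of \cite{codim} to locate an extremal face of $\overline{\NE}(X)$ whose rays have classes spanning $V$ together with a subspace of $\N(D, X)$. The geometry of $D$ and the Fano condition should produce a contraction of one of two types: a direct fiber-type contraction giving a Del Pezzo fibration (leading to case (ii)), or a small or divisorial contraction that must be resolved by a sequence of flips and a subsequent fiber-type contraction of relative dimension $1$ to produce a conic bundle (leading to case (i)).

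For case (ii), once a $K_X$-negative fiber-type contraction $\psi\colon X \to Y$ of relative dimension $2$ is obtained, I would verify that the general fiber is a Del Pezzo surface by adjunction and the Fano condition, that $\rho_X - \rho_Y = 3$ from the dimension of $V$ combined with the fibration direction, and that $Y$ is factorial with canonical singularities by standard properties of Mori contractions from smooth varieties. The delicate point $\codim \Sing(Y) \geq 3$ would come from analyzing possible fiber degenerations over codimension-two loci in $Y$. For case (i), after a chain of flips I would reach $X'$ admitting a divisorial contraction $X' \to Z$ onto a smooth $\pr^1$-bundle $Z \to Y$, exhibiting $X' \to Y$ as a conic bundle of relative Picard number $2$; the smoothness assertions would be verified by tracking the locus swept by curves whose classes escape $\N(D, X)$ throughout the flip sequence.

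The principal obstacle will be precisely this control of singularities: verifying $\codim \Sing(Y) \geq 3$ in case (ii) and the smoothness statements in case (i). Both require detailed analysis of how extremal contractions and flips can fail to preserve regularity along codimension-two loci, and I expect to handle them by combining the adjunction and deformation arguments underlying \cite{codim} with Koll\'ar-type structure theorems for Mori fibrations from smooth Fano manifolds.
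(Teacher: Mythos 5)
Your overall framework (run a MMP for $-D$ where $c(D)=2$, and read off either a conic bundle after flips or a Del Pezzo fibration) matches the paper's strategy in spirit, but the proposal leaves the central difficulty unaddressed: the actual dichotomy between cases $(i)$ and $(ii)$. You propose to split according to whether the first contraction one finds is of fiber type or must be ``resolved by flips,'' but this is not a dichotomy you can read off from $V=\N(X)/\N(D,X)$, and it does not explain the crucial asymmetry in the statement: in case $(ii)$ the Del Pezzo fibration is a \emph{morphism defined on $X$ itself}, with no birational modification allowed, whereas case $(i)$ permits flips. The paper's dichotomy is of a different nature: a special MMP for $-D$ always terminates with a fiber-type contraction $\ph\colon X_k\to Y$ with $\ph(D_k)=Y$, and the two cases are distinguished by whether $\ker\ph_*\subset\N(D_k,X_k)$ (type $(a)$) or not (type $(b)$). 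Type $(b)$ leads, via the smoothness lemma for conic bundles whose singular fibers avoid the flipped locus, to case $(i)$ (or, if the conic bundle is not smooth, back to case $(ii)$ --- a branch entirely absent from your outline). The genuinely hard part, occupying all of Section 4, is the situation where \emph{every} special MMP for \emph{every} eligible divisor is of type $(a)$: there one must manufacture a configuration of four or five pairwise-interacting exceptional $\pr^1$-bundles and use them (via Lemma \ref{4r}) to prove that a particular special MMP for $-E_0$ has \emph{no flips at all} and consists of exactly two smooth codimension-two blow-downs followed by an elementary fiber-type contraction; only this yields a fibration on $X$ itself with $\rho_X-\rho_Y=3$. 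Nothing in your proposal produces or even suggests this flip-free MMP, and without it you would only obtain a Del Pezzo fibration on some flip modification $X'$, which is strictly weaker than statement $(ii)$.

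A secondary gap: in case $(i)$ you assert that after the flips one reaches a divisorial contraction onto a smooth $\pr^1$-bundle over $Y$, to be verified by ``tracking the locus swept by curves whose classes escape $\N(D,X)$.'' In the paper this factorization comes from the precise structure of a type $(b)$ MMP: the exceptional divisor $E$ and its partner $\wi{E}$ with $\psi^*(Z)=E+\wi{E}$, $\ell\equiv e+\wi{e}$, and the fact that every fiber meeting the indeterminacy locus $T$ is integral, which forces $X_k$ and $Y$ to be smooth and $\ph$ to be a conic bundle (Lemma \ref{min}). These smoothness statements are not routine bookkeeping and need the specific input that $\Sing(X_k)\subseteq T\subset D_k$ with $\ph$ finite on $D_k$. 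By contrast, your treatment of the remaining points in case $(ii)$ (general fiber Del Pezzo by adjunction, $Y$ factorial with canonical singularities, $\codim\Sing(Y)\geq 3$ by cutting down to surfaces) is essentially what the paper does in Lemma \ref{quasiel}, though it additionally requires knowing the fibration is quasi-elementary and equidimensional, which again comes out of the explicit Section 4 construction rather than from general principles.
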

We give a more detailed version of $(i)$ in Th.~\ref{main2}.

It is easy to find examples (among toric Fano varieties) of $X$ as in Th.~\ref{main}$(i)$ where a birational modification is necessary, namely $X$ itself does not have a conic bundle structure, nor a fibration in Del Pezzo surfaces.

On the other hand, the author knows no example of case $(ii)$ which does not also satisfy $(i)$ (just by taking a factorization of $\psi$ in elementary contractions, so that $X'=X$).

Results related to Th.~\ref{main} were already known in special cases, such as $\dim X\leq 4$, $\rho_X\leq 3$, or $X$ toric; we refer the reader to section \ref{last} for more details. We also refer the reader to \cite{gloria} for properties of $c_X$ in the singular case.

In particular, after Th.~\ref{codim} and Th.~\ref{main}, a Fano manifold $X$ with $c_X\geq 2$ is always covered by a family of rational curves of anticanonical degree $2$ (see also Rem.~\ref{X0}).

Finally, when $c_X=1$, we show that $X$ still has some (weaker) property, see Prop.~\ref{codimone}.

\medskip

Let us give an outline of the paper.
The proof of Th.~\ref{main} is based on a birational construction introduced in \cite{codim}, consisting in a ``Minimal Model Program'' (MMP) for $-D$, where $D\subset X$ is a prime divisor with $c(D)=c_X$ (see \S \ref{basic} for more details). The existence of such a MMP follows from \cite{BCHM}. This
construction has two outcomes: first, it yields some special smooth prime divisors in $X$, that we call ``exceptional $\pr^1$-bundles''\footnote{An exceptional  $\pr^1$-bundle is a  smooth prime divisor $E\subset X$, which is a $\pr^1$-bundle with fiber $e\subset E$, such that $E\cdot e=-1$.}; second, it gives a rational fibration on $X$ (which however a priori could be trivial, if the base turns out to be a point).

More precisely, we need to consider a ``special'' MMP, that is  a MMP for $-D$ where all extremal rays are $K$-negative.  This MMP can be of two types, which we call type $(a)$ and type $(b)$ (see \ref{basic}), and have different properties. The type of a MMP depends on the divisor $D$ and on the choice of the extremal rays of the MMP, and we have no control on it.
In view of Th.~\ref{main}, the case where
 the MMP is of type $(b)$ is better, as we automatically get a conic bundle defined on an open subset of $X$. 
All these properties, together with the results that we need from \cite{codim}, are recalled in section \ref{MMP}, where we also prove a few  related technical lemmas.

In section \ref{prel} we gather some other preliminary facts, mainly on exceptional $\pr^1$-bundles. The most important result here is Lemma \ref{4r}, which allows to describe (under suitable assumptions) extremal rays having positive intersection with an exceptional $\pr^1$-bundle $E$. This is used in the sequel  to study a MMP for $-E$.

The proof of Th.~\ref{main} is contained in sections \ref{secdelpezzo} and \ref{cb}. Let us sketch the strategy.

Let $X$ be a smooth Fano variety with $c_X=2$.
We first need to consider the possibility that for all choices of prime divisors $D\subset X$ with $c(D)=2$, every special MMP for $-D$ is of type $(a)$. The author has no explicit example of such a situation, but was not able to exclude it.
 This case is studied in  section \ref{secdelpezzo}, and is the hardest part of the paper.  We show (Th.~\ref{intermediate}) that either there is a $D$ as above and a special MMP of type $(b)$ for $-D$, or $X$ has a fibration in Del Pezzo surfaces as in Th.~\ref{main}$(ii)$.

Then in section \ref{cb} we describe the case of a special MMP of type $(b)$ (Prop.~\ref{conicbundle}), and we use Th.~\ref{intermediate} and 
Prop.~\ref{conicbundle} to prove
 Th.~\ref{main}.

Finally in section \ref{last} we study the case $c_X=1$ and consider in more detail some special cases, in particular the toric case.
{\small\tableofcontents}
\vspace{-30pt}
\subsection*{Acknowledgements}
I am grateful to St\'ephane Druel for many useful 
conversations on this work. I also thank the referee for some useful suggestions. The 
author was partially supported by the Research Project M.I.U.R. PRIN 2009 ``Spazi di moduli e teoria
di Lie''.
\stepcounter{thm}
\subsection{Notations}\label{notation}
 We will use the definitions and apply the techniques of the Minimal Model
Program frequently, without explicit references. We refer the reader to
\cite{kollarmori} for terminology and details.

Let $X$ be a normal and $\Q$-factorial projective variety, of dimension $n$.
We denote by $\equiv$ numerical equivalence (for both curves and divisors), and by $\Nu(X)$ the real vector space of Cartier divisors in $X$, with real coefficients, modulo numerical equivalence. For every divisor $D$ or curve $C$ on $X$, we denote by $[D]\in\Nu(X)$ and $[C]\in\N(X)$ the respective numerical equivalence classes. We also set $D^{\perp}:=\{\gamma\in\N(X)\,|\,D\cdot \gamma=0\}\subset\N(X)$.
If $R$ is an extremal ray of $X$, we write 
$D\cdot R>0$ (or $D\cdot R<0$) if $D\cdot\gamma>0$ (respectively, $D\cdot\gamma<0$)
for $\gamma\in R$ non-zero.  We say that $R$ is \emph{$K$-negative} if $K_X\cdot R<0$.
We denote by $\Lo(R)\subseteq X$ the union of curves $C\subseteq X$ such that $[C]\in R$.

 A \emph{contraction} of $X$ is a surjective morphism $\ph\colon X\to Y$ with connected fibers, where $Y$ is normal. We set $\NE(\ph):=(\ker\ph_*)\cap\NE(X)$, where
$\NE(X)\subset\N(X)$ is the convex cone generated by classes of effective curves.
We say that $\ph$ is \emph{$K$-negative} if $K_X\cdot C<0$ for every curve $C\subset X$ such that $\ph(C)=\{pt\}$.

If $\ph$ is of fiber type, we say that $\ph$ is
{\bf quasi-elementary} if every curve contracted by $\ph$ is numerically equivalent to a one-cycle in a general fiber, see \cite[\S 3]{fanos}. In particular, an elementary contraction of fiber type is always quasi-elementary.

We say that $R$ is {\bf an extremal ray of type $(n-1,n-2)^{sm}$} if its contraction $\ph\colon X\to Y$ is the blow-up of a smooth, irreducible subvariety of codimension two in $Y$, contained in the smooth locus of $Y$. 

\medskip

For every closed subset $Z\subset X$, we denote by $\N(Z,X)$ the subspace of $\N(X)$ generated by classes of curves in $Z$, namely $\N(Z,X)=i_*(\N(Z))$, where $i\colon Z \hookrightarrow X$ is the inclusion.

\medskip

A \emph{smooth $\pr^1$-fibration} is a smooth morphism whose fibers are isomorphic to $\pr^1$. A \emph{$\pr^1$-bundle} is the projectivization of a rank two vector bundle.
A \emph{conic bundle} is a proper morphism between smooth varieties, such that every fiber is isomorphic to a plane conic, see \cite[\S 4]{wisn}.

\medskip

Let $X$ be a smooth Fano variety. By \cite[Cor.~1.3.2]{BCHM}, $X$ is a Mori dream space. In particular, this implies that we can freely use all tools of the Minimal Model Program, see \cite[Prop.~1.11]{hukeel}. Moreover, the convex cone $\Eff(X)\subset\Nu(X)$, generated by classes of effective divisors, is closed and polyhedral, see \cite[Prop.~1.11(2)]{hukeel}.

A \emph{contracting birational map} is a birational map $f\colon X\dasharrow X'$ such that $X'$ is projective, normal, and $\Q$-factorial, and $f^{-1}$ does not contract divisors. Equivalently, $f$ factors as a finite sequence of flips and elementary divisorial contractions.

We say that a prime divisor $D\subset X$ is \emph{fixed} if it
coincides with its stable base locus, namely if it is not movable. This happens, for instance, if $D$
 is covered by a family of curves with which it has negative intersection. 
One can see that $D$ is fixed if and only if, after a sequence of flips, it becomes the exceptional divisor of a divisorial contraction (see for instance \cite[Rem.~2.19]{eff}). Moreover, if $D$ is fixed, then $[D]$ spans a one-dimensional face of the cone of effective divisors $\Eff(X)$, and $D$ is the unique prime divisor having class in this face.

An {\bf exceptional $\pr^1$-bundle} is a smooth prime divisor $E\subset X$, which is a $\pr^1$-bundle with fiber $e\subset E$, such that $E\cdot e=-1$. In particular, $E$ is a fixed prime divisor.
Throughout the paper we will adopt the following {\bf convention:} 

\emph{An exceptional $\pr^1$-bundle is always denoted with a capital letter $E$, $F$, $G$, etc. When working with the divisor $E$, we fix a $\pr^1$-bundle structure on $E$. A fiber of this $\pr^1$-bundle is always denoted with the lower-case letter corresponding to the divisor, e.g.\ $e\subset E$, $f\subset F$, $e_1\subset E_1$, $\wi{e}\subset \wi{E}$, and so on. }
\section{Preliminaries on special MMPs}\label{MMP}
\stepcounter{thm}
\subsection{Special MMPs and exceptional $\pr^1$-bundles}\label{basic}
In this paragraph we recall from \cite[\S 2]{codim} the construction that we will use to prove Th.~\ref{main}.

Let $X$ be a smooth Fano variety, and  consider a prime divisor $D\subset X$ with $c(D)>0$.
A {\bf special MMP for $-D$} is a sequence
$$\xymatrix{
{X=X_1}\ar@/^1pc/@{-->}[rrrr]^{\sigma}\ar@/_1pc/@{-->}[drrrr]_{\psi}
\ar@{-->}[r]_{\,\quad \sigma_1}& 
{X_2}\ar@{-->}[r]&
{\cdots}\ar@{-->}[r] &
{X_{k-1}}\ar@{-->}[r]_{\sigma_{k-1}}& 
{X_k}\ar[d]^{\ph}\\
&&&&Y 
}$$
where:
\begin{enumerate}[$\bullet$]
\item every $X_i$ is a normal and $\Q$-factorial projective variety, with terminal singularities;
\item for every $i=1,\dotsc,k-1$ there exists an extremal ray $R_i$ of $X_i$ such that $D_i\cdot R_i>0$ ($D_i\subset X_i$ being the transform of $D\subset X$) and $-K_{X_i}\cdot R_i>0$;
\item $\ph\colon X_k\to Y$ is a $K$-negative elementary contraction of fiber type, such that $\ph(D_k)=Y$.
\end{enumerate}
We set $\sigma:=\sigma_{k-1}\circ\cdots\circ\sigma_1$ and $\psi:=\ph\circ\sigma$.

The word ``special'' refers to a choice of a MMP for $-D$ where all involved extremal rays have positive intersection with the anticanonical divisor. This is possible because $X$ is Fano, using a MMP with scaling of $-K_X$ \cite[Rem.~3.10.9]{BCHM} (see \cite[Prop.~2.4]{codim} for a proof in this specific context).

This construction is studied in detail in \cite[\S 2]{codim}, and by \cite[Lemma 2.7(2)]{codim}
there are two possible cases, depending on $\N(D_k,X_k)\subseteq\N(X_k)$:
\begin{enumerate}[$(a)$]
\item $c(D_k)=0$ ({\bf MMP of type $(a)$});
\item $c(D_k)=1$ ({\bf MMP of type $(b)$}).
\end{enumerate}
We describe separately the two types (when $c(D)=c_X=2$) in paragraphs \ref{typea} and \ref{typeb}; let us first see a technical property that we will need in the sequel.
\begin{lemma}\label{flips}
Let $i\in\{2,\dotsc,k\}$ and let
$\Gamma$ be a one-cycle in $X$, with real coefficients,  such that its transform\footnote{More precisely, $\Gamma_{j+1}$ is defined as $(\sigma_j)_*\Gamma_j$ if $\sigma_j$ is a divisorial contraction, and as the transform of $\Gamma_j$ if $\sigma_j$ is a flip.} $\Gamma_j$ in $X_j$ has  no components contained in 
  the indeterminacy locus of $\sigma_j$, for every $j=1,\dotsc,i-1$.

Let $G\subset X$ be a prime divisor, and define $G_i$ to be the
 transform of $G$ in $X_i$ if $G$ is not contracted by $X\dasharrow X_i$, and $G_i=0$ otherwise.

If $[\Gamma]\in\N(G,X)+\N(D,X)$, then $[\Gamma_i]\in\N(G_i,X_i)+\N(D_i,X_i)$, 
where we set $\N(G_i,X_i):=\{0\}$ if $G_i=0$.
\end{lemma}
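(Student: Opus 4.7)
The plan is to induct on $i$, so the whole content is in the one-step claim: assuming $[\Gamma_j]\in\N(G_j,X_j)+\N(D_j,X_j)$ and $\Gamma_j$ has no component in the indeterminacy locus of $\sigma_j$, show $[\Gamma_{j+1}]\in\N(G_{j+1},X_{j+1})+\N(D_{j+1},X_{j+1})$. I split into two cases depending on the type of $\sigma_j$.

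First, suppose $\sigma_j\colon X_j\to X_{j+1}$ is a divisorial contraction of an extremal ray $R_j$ with exceptional divisor $E_j$. The key geometric observation is that $Z_j:=\sigma_j(E_j)\subseteq D_{j+1}$: indeed $D_j\cdot R_j>0$ forces $D_j\ne E_j$, and writing $\sigma_j^*D_{j+1}=D_j+aE_j$ and intersecting with $R_j$ (using $(\sigma_j)_*R_j=0$ and $E_j\cdot R_j<0$) gives $a>0$, so $D_{j+1}$ contains $Z_j$. Now decompose $[\Gamma_j]=\alpha+\beta$ with $\alpha\in\N(G_j,X_j)$ and $\beta\in\N(D_j,X_j)$. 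Any curve $C\subseteq G_j$ has $\sigma_j(C)\subseteq\sigma_j(G_j)\subseteq G_{j+1}\cup Z_j\subseteq G_{j+1}\cup D_{j+1}$; if $G_j=E_j$ then $G_{j+1}=0$ by convention and $\sigma_j(C)\subseteq Z_j\subseteq D_{j+1}$. Similarly $\sigma_j(D_j)\subseteq D_{j+1}$. Pushing $\alpha+\beta$ forward thus gives $[\Gamma_{j+1}]=(\sigma_j)_*[\Gamma_j]\in\N(G_{j+1},X_{j+1})+\N(D_{j+1},X_{j+1})$.

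Next, suppose $\sigma_j$ is a flip with small contractions $\sigma_j^-\colon X_j\to Y$ and $\sigma_j^+\colon X_{j+1}\to Y$, and let $R_j^+$ denote the flipped ray. Flips contract no divisor, so $G_{j+1}$ and $D_{j+1}$ are the strict transforms of $G_j$ and $D_j$; since $\Gamma_j$ avoids the flipping locus, $\Gamma_{j+1}$ is its strict transform and $(\sigma_j^-)_*\Gamma_j=(\sigma_j^+)_*\Gamma_{j+1}$ as cycles on $Y$. Pushing $[\Gamma_j]=\alpha+\beta$ forward to $Y$ lands in $\N(G_Y,Y)+\N(D_Y,Y)$, where $G_Y:=\sigma_j^-(G_j)=\sigma_j^+(G_{j+1})$ and $D_Y:=\sigma_j^-(D_j)=\sigma_j^+(D_{j+1})$. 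Since $\sigma_j^+$ restricts to birational surjections $G_{j+1}\to G_Y$ and $D_{j+1}\to D_Y$, the two pieces lift to $\alpha_{j+1}\in\N(G_{j+1},X_{j+1})$ and $\beta_{j+1}\in\N(D_{j+1},X_{j+1})$ with $(\sigma_j^+)_*(\alpha_{j+1}+\beta_{j+1})=(\sigma_j^-)_*[\Gamma_j]=(\sigma_j^+)_*[\Gamma_{j+1}]$, whence $[\Gamma_{j+1}]-\alpha_{j+1}-\beta_{j+1}\in\ker(\sigma_j^+)_*=\R[R_j^+]$.

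The crux of the proof---and the only non-bookkeeping input---is absorbing this residual $\R[R_j^+]$ into $\N(D_{j+1},X_{j+1})$. For this I invoke the standard sign-reversal for $\Q$-Cartier divisors across a flip: $D_j\cdot R_j>0$ implies $D_{j+1}\cdot R_j^+<0$ for the strict transform, which forces every curve representing $R_j^+$ to be contained in $D_{j+1}$, giving $[R_j^+]\in\N(D_{j+1},X_{j+1})$ and hence $[\Gamma_{j+1}]\in\N(G_{j+1},X_{j+1})+\N(D_{j+1},X_{j+1})$, closing the induction.
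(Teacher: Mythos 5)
Your proof is correct and follows essentially the same route as the paper's: induction on the step, with the divisorial case handled by noting $\sigma_j(\Exc(\sigma_j))\subseteq D_{j+1}$ (which the paper deduces directly from $D_j\cdot R_j>0$, where you give the explicit pullback computation), and the flip case by pushing down to the common contraction and absorbing $\ker(\sigma_j^+)_*$ into $\N(D_{j+1},X_{j+1})$ via the sign reversal $D_{j+1}\cdot R_j^+<0$.
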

\begin{proof}
By induction on $i$, we can assume that $[\Gamma_{i-1}]\in\N(G_{i-1},X_{i-1})+\N(D_{i-1},X_{i-1})$. Consider $\sigma_{i-1}\colon X_{i-1}\dasharrow X_i$. 

Suppose that $\sigma_{i-1}$ is a divisorial contraction. We have
$(\sigma_{i-1})_*(\N(D_{i-1},X_{i-1}))=\N(D_i,X_i)$, which settles the case $G_{i-1}=0$. If instead $G_{i-1}$ is a prime divisor, we have
$(\sigma_{i-1})_*(\N(G_{i-1},X_{i-1}))=\N(\sigma_{i-1}(G_{i-1}),X_i)$, so if $G_{i-1}$ is not exceptional, then $\sigma_{i-1}(G_{i-1})=G_i$ and the statement is clear. 
Finally if $G_{i-1}=\Exc(\sigma_{i-1})$, then  we have $\sigma_{i-1}(G_{i-1})\subset D_i$
because $D_{i-1}\cdot R_{i-1}>0$, and this gives again the statement.

If $\sigma_{i-1}$ is a flip, let us consider the diagram associated with the flip:
$$\xymatrix{{X_{i-1}}\ar@{-->}[rr]^{\sigma_{i-1}}\ar[dr]_{\zeta}&&{X_i}\ar[dl]^{\zeta'}\\
&Z&
}$$
We consider the case where $G_{i-1}$ is a prime divisor, the case $G_{i-1}=0$ being similar. We have $\zeta_*([\Gamma_{i-1}])=\zeta'_*([\Gamma_i])$, $\zeta(G_{i-1})=\zeta'(G_i)$, and  $\zeta(D_{i-1})=\zeta'(D_i)$, hence
$$\zeta'_*([\Gamma_i])\in \N(\zeta(G_{i-1}),Z)+\N(\zeta(D_{i-1}),Z)
=\zeta'_* \bigl(\N(G_{i},X_i)+\N(D_{i},X_i)\bigr).$$
Since $D_{i-1}\cdot\NE(\zeta)>0$, we have $D_i\cdot\NE(\zeta')<0$, hence $\ker\zeta'_*\subset\N(D_i,X_i)$, which yields the statement.
\end{proof}
\stepcounter{thm}
\subsection{Properties of a special MMP of type $(a)$, when $c_X=c(D)=2$.}\label{typea}
All the statements in this paragraph (except the last line)
follow from \cite[Lemma 2.7]{codim}.

There are two special indices $i_1,i_2\in\{1,\dotsc,k-1\}$,  $i_1<i_2$, such that $R_{i_j}\not\subset\N(D_{i_j},X_{i_j})$ for $j=1,2$, and $R_i\subset\N(D_i,X_i)$ for every $i\in\{1,\dotsc,k\}\smallsetminus\{i_1,i_2\}$ (in particular $k\geq 3$).

For $j=1,2$, $\sigma_{i_j}\colon X_{i_j}\to X_{i_j+1}$ is the blow-up of a smooth subvariety of codimension $2$, contained in the smooth locus of $X_{i_j+1}$.
 Moreover, $\Exc(\sigma_{i_j})$ is contained in the open subset where the birational map $X_{i_j}\dasharrow X$ is an isomorphism.

Let $E_1$ and $E_2$ be the transforms in $X$ of the exceptional divisors of $\sigma_{i_1}$ and $\sigma_{i_2}$ respectively. Then $E_j$ is an exceptional $\pr^1$-bundle such that $D\cdot e_j>0$, for $j=1,2$; the divisors $D$, $E_1$ and $E_2$ are distinct, and $E_1\cap E_2=\emptyset$ (see fig.~\ref{ta}).

Finally by \cite[Lemma 3.1.8]{codim} we have $c(E_j)=2$ and $\dim\N(D\cap E_j,X)=\rho_X-3$ for $j=1,2$. 
\stepcounter{thm}
\begin{figure}[h]
\begin{center} 
\scalebox{0.3}{\includegraphics{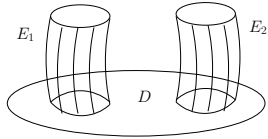}}
\caption{MMP of type $(a)$.}
\label{ta}
\end{center}
\end{figure}
\stepcounter{thm}
\subsection{Properties of a special MMP of type $(b)$, when $c_X=c(D)=2$.}
\label{typeb} 
All the statements in this paragraph (except Lemma \ref{min}) follow from
\cite[Lemmas 2.7 and 2.8]{codim}.

We have $\ker\ph_*\not\subset\N(D_k,X_k)$, and
 $\ph\colon X_k\to Y$ is finite on $D_k$, so that every fiber of $\ph$ has dimension one, and $\dim Y=n-1$. The general fiber of $\ph$ is a smooth rational curve.

Let $T\subset X_k$ be the indeterminacy locus of $\sigma^{-1}$. 
We have:
 $$\Sing(X_k)\subseteq T\subset D_k,$$
and every fiber of $\ph$ intersecting $T$ is an integral rational curve.

There is a special index $i_1\in\{1,\dotsc,k-1\}$ such that $R_{i_1}\not\subset\N(D_{i_1},X_{i_1})$, and $R_i\subset\N(D_i,X_i)$ for every $i\in\{1,\dotsc,k-1\}\smallsetminus\{i_1\}$. 

The map $\sigma_{i_1}\colon X_{i_1}\to X_{i_1+1}$ is the blow-up of a smooth subvariety of codimension $2$, contained in the smooth locus of $X_{i_1+1}$. 
Moreover $\Exc(\sigma_{i_1})$ (respectively, $\sigma_{i_1}(\Exc(\sigma_{i_1}))$) is contained in the open subset where 
the  birational map $X_{i_1}\dasharrow X$ (respectively,
 $X_{i_1+1}\dasharrow X_k$) is an isomorphism. 

Let $E\subset X$ be the transform of $\Exc(\sigma_{i_1})$, so that $E$ is an exceptional $\pr^1$-bundle in $X$ such that $D\cdot e>0$, and $\sigma\colon 
X\dasharrow X_k$ is regular around $E$.

Set $A:=\sigma(E)\subset X_k$. Then $A$ is smooth of dimension $n-2$, 
 $$A\subseteq T\subset D_k,$$ and $A$ is a connected component of $T$. 
The morphism $\ph$ is finite on $A$, and the image $Z:=\ph(A)\subset Y$ is a prime divisor.

Consider the divisor $\ph^{-1}(Z)\subset X_k$, and let $\widehat{E}\subset X$ be its transform.
Then $E\cup \widehat{E}\subset X$
 (respectively, $\ph^{-1}(Z)$) is contained in the open subset where the birational map  $X\dasharrow X_{i_1}$
(respectively,  $X_k\dasharrow X_{i_1+1}$) is an isomorphism. Thus $\psi\colon X\dasharrow Y$ is a regular conic bundle in a neighborhood of  $E\cup \widehat{E}$, $Z=\psi(E)=\psi(\wi{E})$, and
 $\psi^*(Z)=E+\wi{E}$  (see fig.~\ref{tb}).

Let $\ell\subset X$ be a general fiber of $\psi$ (and we still denote by $\ell$ its transform in each $X_i$, so that $\ell\subset X_k$ is a general fiber of $\ph$). 
The divisor $\wi{E}$ is an exceptional $\pr^1$-bundle, $\ell\cap (E\cup\wi{E})=\emptyset$, and we have:
 $$[\wi{e}\,]\not\in\N(E,X), \quad
\ell\equiv e+\wi{e},\quad
E\cdot\ell=\wi{E}\cdot\ell=0,\quad E\cdot\wi{e}=\wi{E}\cdot e=1.$$ 
\stepcounter{thm}
\begin{figure}[h]
\begin{center} 
\scalebox{0.3}{\includegraphics{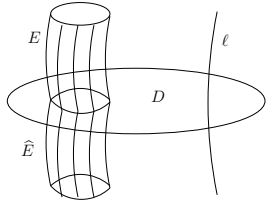}}
\caption{MMP of type $(b)$.}
\label{tb}
\end{center}
\end{figure}
\begin{lemma}\label{min}
Under the assumptions of \ref{typeb}, both $X_k$ and $Y$ are smooth, $\ph\colon X_k\to Y$ is a conic bundle, and $\ph$ is smooth on $\ph(T)$.
\end{lemma}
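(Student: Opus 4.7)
My plan is to reduce the statement to the classification of $K$-negative elementary contractions with one-dimensional fibers. From \ref{typeb} I already know that every fiber of $\ph$ has dimension one, that $\ph$ is $K$-negative, elementary, of fiber type, and that the general fiber is a smooth rational curve. The first thing I would do is pin down the anticanonical degree of the general fiber: since $\ell\equiv e+\wi{e}$ in $X$ and $E$, $\wi{E}$ are exceptional $\pr^1$-bundles, adjunction gives $-K_X\cdot e=-K_X\cdot\wi{e}=1$, hence $-K_X\cdot\ell=2$; as $\ell$ avoids all indeterminacy loci of the MMP, this is preserved and $-K_{X_k}\cdot\ell=2$. So $R:=\NE(\ph)$ is a $K$-negative extremal ray of length at most $2$, with equidimensional $1$-dimensional fibers.

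Next I would invoke the structure theorem for such contractions (Ando's theorem in the smooth setting, and its extension to $\Q$-factorial terminal varieties): a $K$-negative elementary contraction of fiber type with all fibers of dimension one from a smooth variety is a conic bundle over a smooth base. To apply it I need to know that $X_k$ is smooth. Since $\Sing(X_k)\subseteq T$, it suffices to check smoothness along $T$. Around the component $A$ of $T$, the description in \ref{typeb} gives that $X_k$ is isomorphic to $X_{i_1+1}$ in a neighborhood of $A$, and since $A$ is the blow-up center of $\sigma_{i_1}$ contained in the smooth locus of $X_{i_1+1}$, the variety $X_k$ is smooth there. For any other component of $T$ (which would have to come from flipping loci of the $\sigma_i$ with $i>i_1$), I would argue that a singularity of $X_k$ on a fiber $F\subset\ph^{-1}(\ph(T))$ is incompatible with $F$ being an integral curve of anticanonical degree $2$ in an elementary $K$-negative contraction with $1$-dimensional fibers: locally near such a point $\ph$ would have to be a conic bundle by the length bound on $R$, which forces the total space to be smooth along $F$.

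Once $X_k$ is smooth, applying Ando gives that $Y$ is smooth and $\ph$ is a conic bundle. The smoothness of $\ph$ on $\ph(T)$ is then immediate: every fiber meeting $T$ is an integral rational curve, and the only integral plane conic is a smooth $\pr^1$ (the alternatives, two distinct lines and a double line, are not integral). So the scheme-theoretic fibers of $\ph$ over $\ph(T)$ are smooth $\pr^1$'s, and combined with smoothness of both $X_k$ and $Y$ this gives smoothness of the morphism $\ph$ at every point of $\ph^{-1}(\ph(T))$.

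The hard part of this plan is the verification that $X_k$ is smooth along the components of $T$ other than $A$: there I do not have a direct local model (the flips after $\sigma_{i_1}$ may a priori introduce terminal singularities on $X_k$), and I must instead use the global elementary conic-bundle-like structure of $\ph$ to rule them out, via the interplay between $\Sing(X_k)\subseteq T$, the integrality of the fibers meeting $T$, and the length bound $-K_{X_k}\cdot\ell=2$ for the extremal ray $\NE(\ph)$.
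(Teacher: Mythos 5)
Your overall skeleton --- first prove that $X_k$ is smooth, then invoke the structure theorem for elementary $K$-negative contractions with one-dimensional fibers to get that $Y$ is smooth and $\ph$ is a conic bundle, then use integrality of the fibers over $\ph(T)$ to conclude smoothness of $\ph$ there --- is the same as the paper's, and your treatment of the component $A$ of $T$ and of the final step is fine (the anticanonical degree computation $-K_{X_k}\cdot\ell=2$ is correct but not actually needed). However, the step you yourself flag as ``the hard part'', namely smoothness of $X_k$ along the components of $T$ produced by the flips $\sigma_i$ with $i>i_1$, is not actually proved, and the argument you sketch for it is circular: you propose to say that ``locally near such a point $\ph$ would have to be a conic bundle by the length bound on $R$, which forces the total space to be smooth,'' but every version of the conic-bundle structure theorem you could invoke (Ando, Wi\'sniewski, \cite[Th.~4.1]{AWaview}) takes smoothness of the total space as a \emph{hypothesis}. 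The bound $-K_{X_k}\cdot\ell=2$ by itself does not exclude a terminal singularity of $X_k$ sitting on an integral fiber, so this step needs a genuinely different input.

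The paper supplies that input as follows: since every fiber of $\ph$ meeting $T$ is an integral rational curve, \cite[Th.~II.2.8]{kollar} yields that $\ph$ is a \emph{smooth morphism} in a neighbourhood of $\ph^{-1}(\ph(T))$ --- a statement that is proved by deformation theory of the integral rational fibers and does not presuppose that $X_k$ is smooth. Consequently $\Sing(X_k)$, being contained in $T\subseteq\ph^{-1}(\ph(T))$, must coincide with $\ph^{-1}(\Sing(Y)\cap\ph(T))$, which is a union of one-dimensional fibers of $\ph$; since $\Sing(X_k)\subseteq T$ and $\ph$ is finite on $T$, this set is empty. Smoothness of $Y$ and the conic bundle structure then follow from \cite[Th.~4.1(2)]{AWaview} exactly as you intend. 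Without this deformation-theoretic step (or an equivalent local analysis ruling out terminal singularities of $X_k$ on integral fibers), your proof of the smoothness of $X_k$ is incomplete.
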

\begin{proof}
 We reproduce an argument from 
 \cite[3.3.7]{minimal}.
Since every fiber of $\ph$ intersecting $T$ is an integral rational curve, by \cite[Th.~II.2.8]{kollar}
$\ph$ is smooth in a neighbourhood of $\ph^{-1}(\ph(T))$. Thus
$\Sing(X_k)=\ph^{-1}(\Sing(Y)\cap\ph(T))$, because
 $\Sing(X_k)\subseteq T$. This implies that 
$\Sing(X_k)=\emptyset$, as $\ph$ is finite on $T$. Therefore $Y$ is smooth  and $\ph$ is a conic bundle
(see \cite[Th.~4.1(2)]{AWaview} and references therein). 
\end{proof}
\begin{parg}
We conclude this section with some additional properties needed in the sequel.
We keep the same notation as in the previous paragraphs.
\begin{lemma}\label{directsum0}
Let $X$ be a smooth Fano variety with $c_X=2$, $D\subset X$ a prime divisor with $c(D)=2$, and consider a special MMP for $-D$.

If the MMP is of type $(a)$, we have:
$$\N(X)=\N(D,X)\oplus\R[e_1]\oplus\R[e_2].$$

If the MMP is of type $(b)$, we have:
$$\N(X)=\N(D,X)\oplus\R[e]\oplus\R[\wi{e}\,].$$
\end{lemma}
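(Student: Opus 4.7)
Since $c(D)=2$, we have $\dim\N(D,X)=\rho_X-2$, so the claimed direct sum amounts, by a dimension count, to the statement that the two curve classes $[e_1],[e_2]$ (respectively $[e],[\wi{e}\,]$) are linearly independent in the quotient $\N(X)/\N(D,X)$. The plan is, in each case, to assume a relation $a[e_1]+b[e_2]\in\N(D,X)$ (respectively $a[e]+b[\wi{e}\,]\in\N(D,X)$) for some $(a,b)\in\R^2$ and to deduce $a=b=0$.

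The tool is Lemma~\ref{flips}, applied with $G=D$ to the one-cycle $\Gamma:=ae_1+be_2$ (respectively $ae+b\wi{e}$). The hypothesis of that lemma---that the components of $\Gamma_i$ stay out of the indeterminacy locus of each $\sigma_i$---is exactly the content of the ``open iso'' statements in \ref{typea} and~\ref{typeb}: $E_1,E_2$ (respectively $E\cup\wi E$) lie in the open subsets on which the relevant birational maps between $X$ and $X_{i_j}$ are isomorphisms. The conclusion of Lemma~\ref{flips} is then that $[\Gamma_i]\in\N(D_i,X_i)$ for every $i=1,\dots,k$.

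For type $(a)$, I would then evaluate at two distinguished steps. At $i=i_2$, the transform of $e_1$ has already been contracted by $\sigma_{i_1}$ (since $i_1<i_2$), so as a cycle $\Gamma_{i_2}=b\,e_{2,i_2}$, where $e_{2,i_2}$ is a fiber of $\sigma_{i_2}$ and spans the extremal ray $R_{i_2}$. Since $R_{i_2}\not\subset\N(D_{i_2},X_{i_2})$, the inclusion $[\Gamma_{i_2}]\in\N(D_{i_2},X_{i_2})$ forces $b=0$. Then $\Gamma=ae_1$, and the same argument at $i=i_1$, using $R_{i_1}\not\subset\N(D_{i_1},X_{i_1})$, forces $a=0$.

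For type $(b)$ the same template applies with the final fiber-type contraction $\ph\colon X_k\to Y$ in place of $\sigma_{i_2}$: using $\ell\equiv e+\wi e$ together with the fact that the transform of $e$ has been contracted at step $i_1$, and that $\ph$ is a $\pr^1$-bundle by Lemma~\ref{min} (so $[\wi e_k]=[\ell_k]$), one gets $[\Gamma_k]=b\,[\ell_k]$ with $[\ell_k]$ a generator of $\ker\ph_*\not\subset\N(D_k,X_k)$; hence $b=0$, and the step-$i_1$ argument again yields $a=0$. The only real technical point is the verification of the hypothesis of Lemma~\ref{flips}, which reduces to the ``open iso'' statements of \ref{typea} and~\ref{typeb} together with the disjointness $E_1\cap E_2=\emptyset$ (respectively the description of $E\cup\wi E$); once this bookkeeping is done, the remainder is a short calculation in the intermediate groups $\N(X_i)$.
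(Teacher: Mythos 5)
Your proposal is correct and follows essentially the same route as the paper: reduce to directness of the sum by the dimension count, transport the cycle $ae_1+be_2$ (resp.\ $ae+b\wi{e}$) along the MMP via Lemma~\ref{flips}, and kill the coefficients one at a time using $R_{i_2}\not\subset\N(D_{i_2},X_{i_2})$ and $R_{i_1}\not\subset\N(D_{i_1},X_{i_1})$ (resp.\ $\ker\ph_*\not\subset\N(D_k,X_k)$). The only nitpick is that in type $(b)$ you cite Lemma~\ref{min} to call $\ph$ a $\pr^1$-fibration, whereas it is only a conic bundle; but all you need is that $[\wi{e}_k]$ generates the one-dimensional space $\ker\ph_*$, which holds regardless.
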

\begin{proof}
The proofs in the two cases are very similar; we consider the case where the MMP is of type $(a)$. We keep the same notation as in \ref{typea}.

We have $\R[e_1]\neq\R[e_2]$ (for instance because $E_1\cdot e_1=-1$ and $E_1\cdot e_2=0$), so it is enough to show that $\N(D,X)\cap (\R[e_1]\oplus\R[e_2])=\{0\}$.

Suppose that $\lambda_1[e_1]+\lambda_2[e_2]\in\N(D,X)$ for some $\lambda_i\in\R$, set $\Gamma:=\lambda_1e_1+\lambda_2e_2$, and consider the birational map $X\dasharrow X_{i_2}$. 

For every $j<i_1$,  $e_1$ and $e_2$ are contained in the open subset where $\sigma_j$ is an isomorphism, so the transform of $\Gamma$ in $X_{i_1}$ is $\Gamma_{i_1}=\lambda_1e_1+\lambda_2e_2$ (for simplicity we still denote by $e_1$ and $e_2$ their transforms along the MMP). 

The blow-up $\sigma_{i_1}$ contracts $e_1$ to a point and $e_2\cap\Exc(\sigma_{i_1})=\emptyset$, so that  the transform of $\Gamma$ in $X_{i_1+1}$ is $\Gamma_{i_1+1}=(\sigma_{i_1})_*(\Gamma_{i_1})=\lambda_2e_2$.

Finally, for every $j<i_2$, $e_2$ is contained in the open subset where $\sigma_j$ is an isomorphism, and  the transform of $\Gamma$ in $X_{i_2}$ is $\Gamma_{i_2}=\lambda_2e_2$. 

Now applying Lemma \ref{flips} (with $G=D$), we deduce that 
 $\lambda_{1}[e_{1}]+\lambda_{2}[e_{2}]\in\N(D_{i_1},X_{i_1})$
and
$\lambda_2 [e_2]\in\N(D_{i_2},X_{i_2})$.

We have $R_{i_2}\not\subset\N(D_{i_2},X_{i_2})$ and $[e_{2}]\in R_{i_2}$, hence  $[e_2]\not\in\N(D_{i_2},X_{i_2})$, and we deduce that
 $\lambda_2=0$ and  $\lambda_{1}[e_{1}]\in\N(D_{i_1},X_{i_1})$. Similarly 
$[e_1]\not\in\N(D_{i_1},X_{i_1})$ yields
 $\lambda_1=0$, and we get the statement.
\end{proof}
\begin{remark}[\cite{codim}, Rem.~3.1.3]\label{elem2}
Let $X$ be a smooth projective variety,
$E\subset X$ an exceptional $\pr^1$-bundle, and $D\subset X$ a prime divisor with $D\cdot
e>0$. Then the following holds:
\begin{enumerate}[$(1)$]
\item $\N(E,X)=\R[e]+\N(D\cap E,X)$;
\item
either $[e]\in\N(D\cap E,X)$ and $\N(D\cap E,X)=\N(E,X)$, 
or $[e]\not\in\N(D\cap E,X)$ and $\N(D\cap E,X)$ has codimension
  $1$ in $\N(E,X)$; 
\item
for every irreducible curve $C\subset E$ we have
$C\equiv \lambda
e + \mu \w{C}$,
 where $\w{C}$ is a 
curve contained in $D\cap E$, $\lambda,\mu\in\R$, and
$\mu\geq 0$.
\end{enumerate}
\end{remark}
\begin{lemma}\label{directsum}
Let $X$ be a smooth Fano variety with $c_X=2$, $D\subset X$ a prime divisor with $c(D)=2$, and consider a special MMP of type $(a)$ for $-D$.
 We have: 
$$\N(E_1,X)\cap\N(E_2,X)=\N(D\cap E_1,X)\cap\N(D\cap E_2,X).$$
\end{lemma}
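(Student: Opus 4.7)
The inclusion $\N(D\cap E_1,X)\cap\N(D\cap E_2,X)\subseteq \N(E_1,X)\cap\N(E_2,X)$ is immediate from $D\cap E_j\subseteq E_j$, so only the reverse inclusion needs work. My plan is to pick an arbitrary class $\alpha\in \N(E_1,X)\cap\N(E_2,X)$, write it in two ways using the description of $\N(E_j,X)$ from Remark \ref{elem2}(1), and then use the direct sum decomposition from Lemma \ref{directsum0} to kill the fiber classes.

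More precisely, by Remark \ref{elem2}(1), for $j=1,2$ we can write
$$\alpha=\lambda_j[e_j]+\beta_j,\qquad \beta_j\in\N(D\cap E_j,X),\ \lambda_j\in\R.$$
Equating the two expressions gives $\lambda_1[e_1]-\lambda_2[e_2]=\beta_2-\beta_1$. Since $\N(D\cap E_j,X)\subseteq\N(D,X)$, the right-hand side lies in $\N(D,X)$. The key point is now to invoke Lemma \ref{directsum0}, which in the type $(a)$ case yields $\N(X)=\N(D,X)\oplus\R[e_1]\oplus\R[e_2]$. This decomposition forces $\lambda_1=\lambda_2=0$ and $\beta_1=\beta_2$, so $\alpha=\beta_1=\beta_2\in\N(D\cap E_1,X)\cap\N(D\cap E_2,X)$, as desired.

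There is no real obstacle here; the lemma is essentially a bookkeeping consequence of the two preceding results. The only subtle point worth double-checking is that the direct sum in Lemma \ref{directsum0} is genuine, so that membership of $\lambda_1[e_1]-\lambda_2[e_2]$ in $\N(D,X)$ really does pin down both coefficients to zero — but this is exactly what ``$\oplus$'' asserts, so the argument is a short chase of definitions.
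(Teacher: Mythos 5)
Your proposal is correct and follows essentially the same route as the paper: both write $\alpha=\lambda_j[e_j]+\beta_j$ via Remark~\ref{elem2}(1) (using $D\cdot e_j>0$ from \ref{typea}) and then invoke the direct sum decomposition of Lemma~\ref{directsum0} to force $\lambda_1=\lambda_2=0$. The only cosmetic difference is that you subtract the two expressions before applying the decomposition, whereas the paper applies it to the two expressions directly; the content is identical.
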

\begin{proof}
Clearly $\N(D\cap E_1,X)\cap\N(D\cap E_2,X)\subseteq \N(E_1,X)\cap\N(E_2,X)$.

Conversely, let $\gamma\in\N(E_1,X)\cap\N(E_2,X)$, and fix $i\in\{1,2\}$.
Since $D\cdot e_i>0$, we have $\N(E_i,X)=\R[e_i]+\N(D\cap E_i,X)$ by Rem.~\ref{elem2}(1).
Therefore we can write
 $$\gamma=\lambda_i[e_i]+\eta_i$$ 
with $\lambda_i\in\R$ and $\eta_i\in\N(D\cap E_i,X)\subseteq\N(D,X)$ for $i=1,2$.  By Lemma \ref{directsum0}, we deduce that $\lambda_1=\lambda_2=0$, and $\gamma=\eta_1=\eta_2\in\N(D\cap E_1,X)
\cap\N(D\cap E_2,X)$. 
\end{proof}
\end{parg}
\stepcounter{thm}
\section{Preliminary results on exceptional $\pr^1$-bundles}\label{prel}
In this section we show some properties that we will need in the sequel, mainly concerning  exceptional $\pr^1$-bundles and extremal rays. 
\begin{remark}\label{intersection}
Let $X$ be a smooth Fano variety with $c_X=2$, $D\subset X$ a prime divisor, and $B_1,B_2,B_3\subset X$ distinct fixed prime divisors. Then $D$ must intersect at least one of the $B_i$'s.

Indeed, the classes $[B_1],[B_2],[B_3]\in\Nu(X)$ generate distinct one-dimensional faces of the effective cone $\Eff(X)$ (see \ref{notation}); in particular, these classes are linearly independent. This implies that the intersection $L:=B_1^{\perp}\cap B_2^{\perp}\cap B_3^{\perp}$ in $\N(X)$ has codimension $3$. As $c(D)\leq c_X=2$, we cannot have $\N(D,X)\subseteq L$, thus $D$ cannot be disjoint from $B_1\cup B_2\cup B_3$.
\end{remark}
Similarly one shows the following.
\begin{remark}\label{inter2}
Let $X$ be a smooth Fano variety with $c_X=2$, $D\subset X$ a prime divisor, and $B_1,B_2\subset X$ distinct fixed prime divisors. 
If $D\cap (B_1\cup B_2)=\emptyset$, then
$$c(D)=2\quad\text{and}\quad\N(D,X)=B_1^{\perp}\cap B_2^{\perp}.$$
\end{remark}
\begin{remark}\label{e}
Let $X$ be a smooth projective variety,
$E\subset X$ an exceptional $\pr^1$-bundle, and $D_1,D_2\subset X$ two disjoint prime divisors such that $D_i\cap E\neq\emptyset$ for $i=1,2$. Then one of the following holds:
\begin{enumerate}[$(i)$]
\item $D_1\cdot e>0$ and $D_2\cdot e>0$ (see fig.~\ref{figlemma1});
\item  $D_1\cdot e=D_2\cdot e=0$, and $[e]\in\N(D_1,X)\cap\N(D_2,X)$.
\end{enumerate}

Indeed, $D_1$ and $D_2$ must be distinct from $E$, so that $D_i\cdot e\geq 0$ for $i=1,2$.
Suppose that $D_1\cdot e=0$. Since $D_1$ intersects $E$, then $D_1$ must contain some curve $e$; in particular $[e]\in\N(D_1,X)$. As $D_1\cap D_2=\emptyset$, this also gives $D_2\cdot e=0$, and finally  $[e]\in\N(D_2,X)$.
\end{remark}
\begin{lemma}\label{1}
Let $X$ be a smooth Fano variety and $E\subset X$ an exceptional 
  $\pr^1$-bundle. Suppose  that there exist two disjoint prime divisors $D_1,D_2\subset X$ such that $D_i\cdot e>0$ for $i=1,2$ (see fig.~\ref{figlemma1}).

Let  $R$ be
an  extremal ray of $X$ whose contraction is not finite on $E$, and such that $[e]\not\in R$. Then we have one of the possibilities:
\begin{enumerate}[$(a)$]
\item $D_1\cdot R<0$, $D_2\cdot R=0$, $\Lo(R)\subseteq D_1$;
\item $D_1\cdot R=0$, $D_2\cdot R<0$, $\Lo(R)\subseteq D_2$;
\item $D_1\cdot R=D_2\cdot R=0$, and there are curves with class in $R$ both in $D_1\cap E$ and in $D_2\cap E$.
\end{enumerate}
In particular $D_i\cdot R\leq 0$ for $i=1,2$, and the contraction of $R$ is not finite on $D_1\cup D_2$.
\stepcounter{thm}
\begin{figure}[h]
\begin{center} 
\scalebox{0.3}{\includegraphics{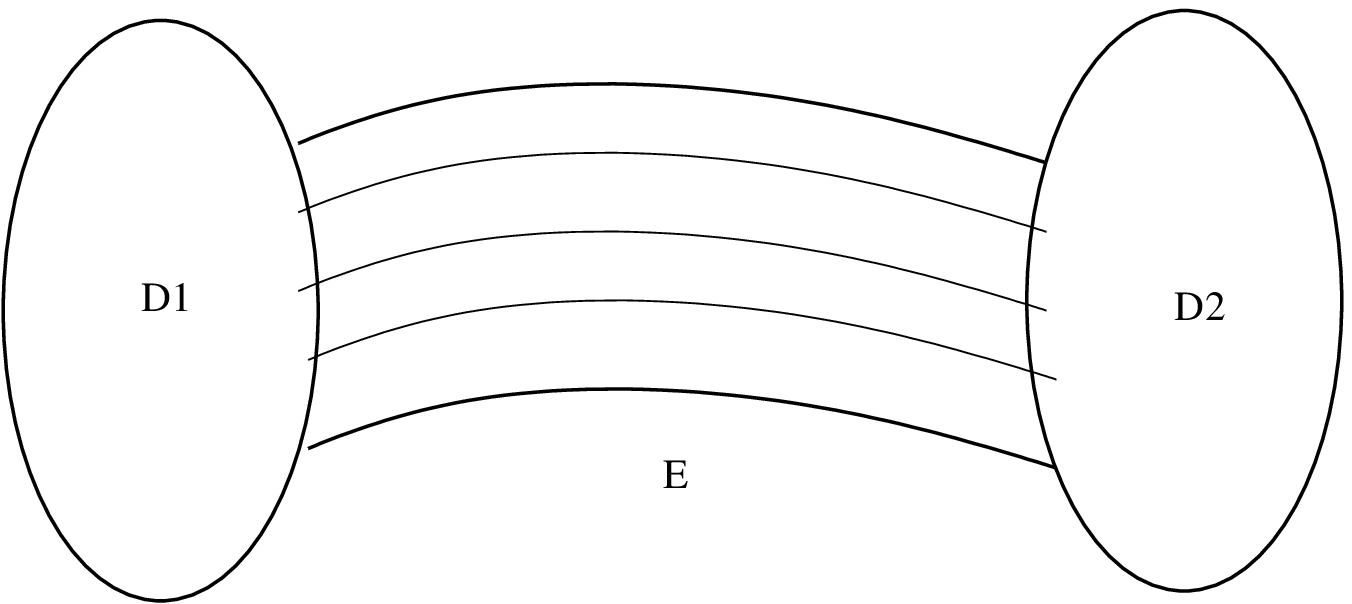}}
\caption{}
\label{figlemma1}
\end{center}
\end{figure}
\end{lemma}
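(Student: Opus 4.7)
The plan is to pick an irreducible curve $C\subseteq E$ with $[C]\in R$---available since the contraction $\ph_R$ of $R$ is not finite on $E$---and to analyze its class using the disjointness $D_1\cap D_2=\emptyset$. The hypothesis $[e]\notin R$ forces $[C]\not\propto[e]$: otherwise, since both classes are effective and $\overline{\NE}(X)$ is strictly convex ($X$ being Fano), one would conclude $[e]\in R$. Applying Rem \ref{elem2}(3) with $D=D_1$ and with $D=D_2$, I write
$$[C]=\lambda_1[e]+\mu_1[\w{C}_1]=\lambda_2[e]+\mu_2[\w{C}_2],$$
with $\w{C}_i\subseteq D_i\cap E$ irreducible and $\mu_i\ge 0$; in fact $\mu_1,\mu_2>0$, since $\mu_i=0$ would again force $[C]\propto[e]$.

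The disjointness $D_1\cap D_2=\emptyset$ makes the two decompositions interact cleanly: because $\w{C}_1\subseteq D_1$, one has $D_2\cdot\w{C}_1=0$, and symmetrically $D_1\cdot\w{C}_2=0$. Intersecting the first decomposition with $D_2$ and the second with $D_1$ gives
$$D_2\cdot C=\lambda_1(D_2\cdot e),\qquad D_1\cdot C=\lambda_2(D_1\cdot e),$$
so the signs of $D_i\cdot R$ are controlled by the $\lambda_j$ (using $D_i\cdot e>0$). The main step---and the one that requires care---is to show $\lambda_1,\lambda_2\le 0$: if, say, $\lambda_1>0$, then $[C]=\lambda_1[e]+\mu_1[\w{C}_1]$ writes $[C]\in R$ as a strictly positive combination of two nonzero effective classes, so extremality of $R$ forces $[e]\in R$, contradicting the hypothesis.

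The trichotomy then follows from a short case analysis on $(\lambda_1,\lambda_2)$. Having both strictly negative would give $\Lo(R)\subseteq D_1\cap D_2=\emptyset$, absurd; so at most one of them is strictly negative. If $\lambda_2<0$, then $D_1\cdot R<0$ and $\Lo(R)\subseteq D_1$, while $\lambda_1<0$ is now excluded, forcing $\lambda_1=0$ and $D_2\cdot R=0$: this is case $(a)$, and case $(b)$ is symmetric. If $\lambda_1=\lambda_2=0$, each decomposition collapses to $[C]=\mu_i[\w{C}_i]$ with $\mu_i>0$, exhibiting irreducible curves $\w{C}_i\subseteq D_i\cap E$ with $[\w{C}_i]\in R$---that is, case $(c)$. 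The closing assertions $D_i\cdot R\le 0$ and the non-finiteness of $\ph_R$ on $D_1\cup D_2$ are then immediate.
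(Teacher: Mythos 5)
Your proof is correct and follows essentially the same route as the paper's: both rest on Rem.~\ref{elem2}(3) together with the vanishing of the cross terms $D_i\cdot\w{C}_j$ forced by $D_1\cap D_2=\emptyset$, and on extremality of $R$ to conclude $\lambda\le 0$. The only difference is organizational --- the paper runs the decomposition for one divisor at a time while you carry both simultaneously and case-split on $(\lambda_1,\lambda_2)$, and you are somewhat more explicit about why $\mu_i>0$ and $\lambda_i\le 0$, which the paper leaves terse.
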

\begin{proof}
Let $C\subset E$ be an irreducible curve such that $[C]\in R$. Since $D_2\cdot e>0$, by Rem.~\ref{elem2}(3) we have
$$C\equiv \lambda e+\mu\w{C}$$
where $\lambda,\mu\in\R$, $\mu>0$, and $\w{C}$ is a curve in $E\cap D_2$. As $R$ is an extremal ray, $[e]\not\in R$, and $\mu>0$, we must have $\lambda\leq 0$.

Intersecting with $D_1$ we get $D_1\cdot C=\lambda D_1\cdot e\leq 0$, hence $D_1\cdot R\leq 0$. If $D_1\cdot R<0$, we get $(a)$. If $D_1\cdot R=0$, then $\lambda=0$ and hence $[\w{C}]\in R$. 

Applying the same reasoning to $D_2$, we get $(b)$ or $(c)$.
\end{proof}
\begin{lemma}\label{4r}
Let $X$ be a smooth Fano variety with $c_X=2$, and let $E_0,E_1,E_2\subset X$ be exceptional 
 $\pr^1$-bundles  such that
 $c(E_i)=2$ for $i=0,1,2$. Suppose that $E_1\cap E_2=\emptyset$, and that $E_0\cdot e_i>0$ and $E_i\cdot e_0>0$ for $i=1,2$. Finally assume that there exist prime divisors $D_1,D_2$ (possibly equal) such that $D_i\cap E_i\neq\emptyset$  and $D_i\cap E_0=\emptyset$ for $i=1,2$ (see fig.~\ref{figlemma4r}).

Let $R$ be an extremal ray of $X$ such that $E_0\cdot R>0$. 

Then $R$ is  of type $(n-1,n-2)^{sm}$, $\Lo(R)\cap (E_1\cup E_2)\neq\emptyset$,
the contraction of $R$ is finite on $E_0$, and for $i\in\{1,2\}$ 
 the contraction of $R$ is finite on $E_i$ unless $[e_i]\in R$.
\stepcounter{thm}
\begin{figure}[h]
\begin{center} 
\scalebox{0.3}{\includegraphics{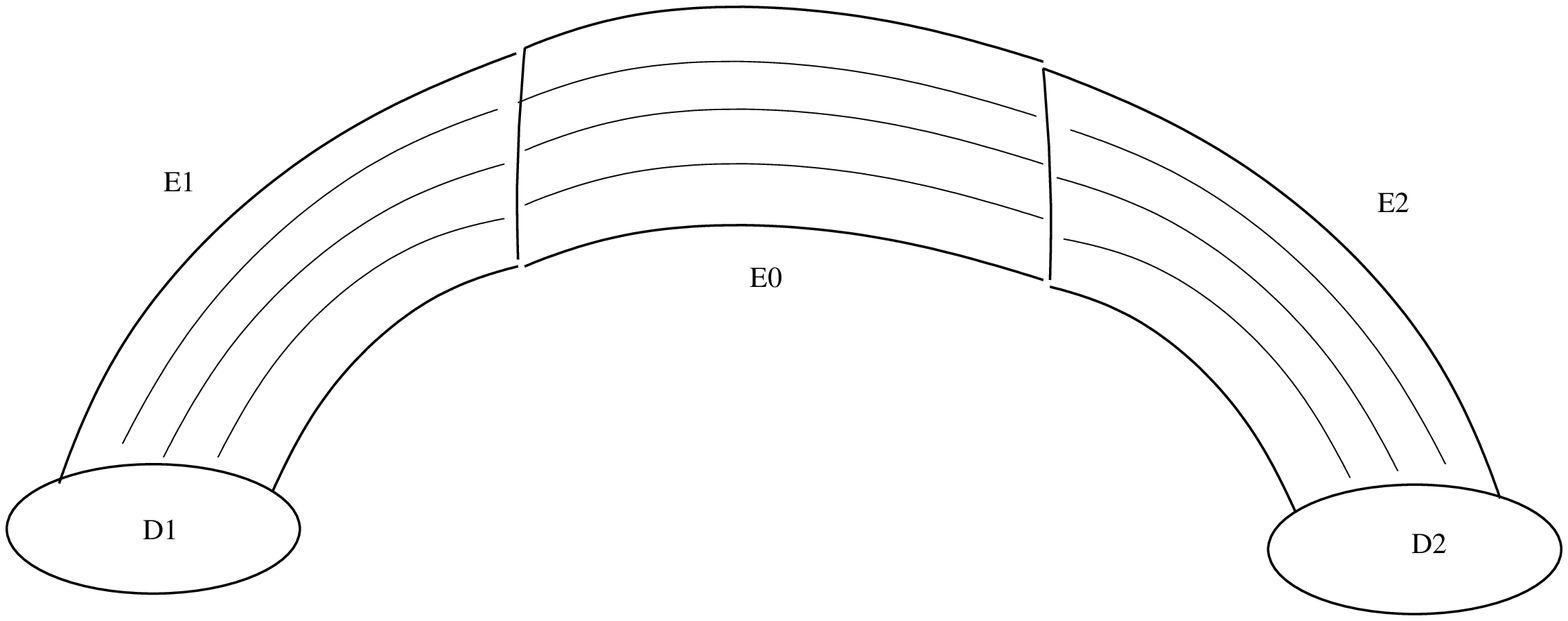}}
\caption{The divisors in Lemma \ref{4r}.}\label{figlemma4r}
\end{center}
\end{figure}
\end{lemma}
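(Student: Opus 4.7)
The plan is to deploy Lemma~\ref{1} on $E_0$, using $E_1,E_2$ as the two disjoint divisors with positive intersection with $e_0$. The preliminary observation is that $[e_0]\not\in R$, coming from $E_0\cdot e_0=-1<0<E_0\cdot R$; this is exactly the hypothesis required to invoke Lemma~\ref{1} whenever the contraction of $R$ is not finite on $E_0$.

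The central (and hardest) assertion is that the contraction of $R$ is finite on $E_0$, and I would argue this by contradiction. If it is not, Lemma~\ref{1} applied to $(E_0;E_1,E_2)$ delivers $E_1\cdot R\leq 0$, $E_2\cdot R\leq 0$, together with a curve $C'$ with $[C']\in R$ contained in $E_0\cap E_j$ for some $j\in\{1,2\}$. Since $C'\subset E_0$ and $D_j\cap E_0=\emptyset$, we get $D_j\cdot R=0$. The contradiction should come from combining $D_j\cdot R=0$ with $D_j\cap E_j\neq\emptyset$ and the $\pr^1$-bundle structure of $E_j$: using Remark~\ref{elem2}(3) inside $E_j$ with auxiliary divisor $E_0$ (where $E_0\cdot e_j>0$), one writes any class in $R\cap\N(E_j,X)$ as $\lambda e_j+\mu\widetilde{C}$ with $\widetilde{C}\subset E_0\cap E_j$ and $\mu\geq 0$; tracking the intersection of $D_j$ with a fiber $e_j(p)$ through a point $p\in D_j\cap E_j$ should then force $e_j(p)\subset D_j$, in conflict with $D_j\cap E_0=\emptyset$ and $E_0\cdot e_j>0$.

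The remaining conclusions follow comparatively easily. For finiteness on $E_i$ modulo $[e_i]\in R$: a curve $C\subset E_i$ with $[C]\in R$ and $[e_i]\not\in R$ decomposes by Remark~\ref{elem2}(3) (inside $E_i$ with $D=E_0$) as $C\equiv \lambda e_i+\mu\widetilde{C}$ with $\widetilde{C}\subset E_0\cap E_i$ and $\mu\geq 0$; extremality of $R$, combined with $[e_i]\not\in R$, forces $[\widetilde{C}]\in R$, but then $\widetilde{C}\subset E_0$ contradicts Step~1. For $\Lo(R)\cap(E_1\cup E_2)\neq\emptyset$, I would use $E_0\cdot R>0$ to see that $\Lo(R)\cap E_0\neq\emptyset$, and then trace the intersection through Remark~\ref{elem2}(3) inside $E_0$ down to $E_0\cap(E_1\cup E_2)$. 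Finally, $R$ being of type $(n-1,n-2)^{sm}$ should follow from $K$-negativity of $R$, the finiteness of the contraction on $E_0$ together with $E_0\cdot R>0$ pinning down the length of $R$ to be $1$, and an application of Ionescu--Wi\'sniewski type bounds on the fiber dimension of the contraction.

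The main obstacle is Step~1. The hypothesis on $D_j$ is comparatively weak, and simultaneously exploiting $D_j\cap E_0=\emptyset$, $D_j\cap E_j\neq\emptyset$ (for $j=1,2$), and $E_1\cap E_2=\emptyset$ through the two $\pr^1$-bundle structures on $E_0$ and $E_j$, with the Remark~\ref{elem2}(3) decompositions, will require delicate intersection-theoretic bookkeeping.
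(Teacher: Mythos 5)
Your opening moves match the paper's: Lemma~\ref{1} applied to $E_0$ with the disjoint pair $E_1,E_2$, the observation that $[e_0]\notin R$, and the verification that $D_j\cdot e_j>0$ (a fiber $e_j$ contained in $D_j$ would meet $E_0$). Your argument for ``finite on $E_i$ unless $[e_i]\in R$'' is also essentially sound. But the central step does not close as written: decomposing $C'\equiv\lambda e_j+\mu\widetilde{C}$ with $\widetilde{C}\subset E_0\cap E_j$ and intersecting with $D_j$ only yields $\lambda=0$, i.e.\ $[C']\in\R_{>0}[\widetilde{C}]$ with $\widetilde{C}\subset E_0\cap E_j$ --- which is where you started. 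The decomposition must be taken with respect to $D_j$ instead: then $\widetilde{C}\subset D_j\cap E_j$, intersecting with $E_0$ gives $\lambda E_0\cdot e_j=E_0\cdot C'>0$, hence $\lambda>0$, hence $[e_j]\in R$ by extremality, hence $D_j\cdot e_j=0$ because $D_j\cdot R=0$ --- a contradiction. Alternatively (and this is what the paper does, in the opposite order: first finiteness on $E_1\cup E_2$, then on $E_0$), one applies Lemma~\ref{1} a second time, to $E_j$ with the disjoint pair $(E_0,D_j)$, obtaining $E_0\cdot R\le 0$. So this part is repairable, but the choice of auxiliary divisor is not a detail.

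The serious gap is $\Lo(R)\cap(E_1\cup E_2)\neq\emptyset$. ``Tracing through Rem.~\ref{elem2}(3) inside $E_0$'' cannot work: once the contraction is known to be finite on $E_0$, no curve with class in $R$ lies in $E_0$, so the $\pr^1$-bundle decomposition on $E_0$ says nothing about $\Lo(R)$, and the mere nonemptiness of $\Lo(R)\cap E_0$ carries no link to $E_1\cup E_2$. The paper's argument here is global and is precisely where $c_X=2$, $c(E_i)=2$ and the divisor $D_1$ enter: assuming $E_R:=\Lo(R)$ disjoint from $E_1\cup E_2$, one exhibits a codimension-$3$ subspace of $\N(X)$ inside $D_1^{\perp}\cap E_1^{\perp}\cap E_2^{\perp}\cap E_R^{\perp}$, hence a relation $\lambda_0D_1+\lambda_1E_1+\lambda_2E_2+\lambda_3E_R\equiv 0$ whose coefficients, after intersecting with $e_1$, $e_2$, $e_R$, are forced to be nonnegative and not all zero --- impossible for an effective divisor. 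Note that your proposal never invokes $c_X=2$ or $c(E_i)=2$; besides the step just described, these hypotheses are needed to exclude that the contraction of $R$ is of fiber type (otherwise $E_0$ would dominate the base $Y$ and $\N(Y)=\varphi_*\N(E_0,X)$ would have dimension $\rho_X-2<\rho_X-1$), which is a prerequisite for concluding that $R$ is of type $(n-1,n-2)^{sm}$.
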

\begin{proof}
We observe first of all that $R$ cannot be of fiber type. This is because if the contraction $\ph\colon X\to Y$ of $R$ is of fiber type, then $\ph(E_0)=Y$ (because $E_0\cdot R>0$) and hence $\ph_*(\N(E_0,X))=\N(Y)$, which is impossible as $\dim\N(Y)=\rho_X-1$ and $\dim\N(E_0,X)=\rho_X-2$. Thus the contraction of $R$ is birational.

We show that $[e_1]$ and $[e_2]$ do not belong to $\N(E_0,X)$: let $i\in\{1,2\}$. One can easily see that $\dim\N(E_0\cap E_i,X)=\rho_X-3$ (see \cite[Lemma 3.1.8]{codim}), and hence $[e_i]\not\in\N(E_0\cap E_i,X)$ by Rem.~\ref{elem2}(2).
We have 
$$\N(E_0\cap E_i,X)\subseteq\N(E_0,X)\cap \N(E_i,X),$$
and $\dim\N(E_0,X)=\dim\N(E_i,X)=\rho_X-2$.
On the other hand $\N(E_0,X)\neq \N(E_i,X)$ (for instance because
 $\N(E_i,X)\subset E_{3-i}^{\perp}$ while $\N(E_0,X)\not\subset E_{3-i}^{\perp}$), thus $\N(E_0\cap E_i,X)=\N(E_0,X)\cap \N(E_i,X)$. Since $[e_i]\not\in\N(E_0\cap E_i,X)$
and $[e_i]\in\N(E_i,X)$, we conclude that
$[e_i]\not\in\N(E_0,X)$.

\medskip

Suppose that 
 $[e_i]\in R$ for some $i\in\{1,2\}$. Since $[e_i]\not\in\N(E_0,X)$, we have $R\not\subset\N(E_0,X)$, hence the contraction of $R$ must be finite on $E_0$. On the other hand, every non-trivial fiber $F$ of the contraction must intersect $E_0$, because $E_0\cdot R>0$. This yields $\dim F=1$, hence
 $R$ is of type $(n-1,n-2)^{sm}$ by \cite[Th.~1.2]{wisn}, and we have the statement.

\medskip

Let us assume now that $[e_1]$ and $[e_2]$ do not belong to $R$. 

We show that the contraction of $R$ is finite on $E_1\cup E_2$. By contradiction, suppose that
 the contraction of $R$ is not finite on $E_i$, with $i\in\{1,2\}$.
Then
 we apply Lemma \ref{1} to $E_i$ and get $E_0\cdot R\leq 0$, against our assumptions (here we use the existence of the divisor $D_i$). Thus the contraction of $R$ is finite on $E_1\cup E_2$. 

Notice that since $E_0\cdot R>0$ and $E_0\cdot e_0=-1$, we also have $[e_0]\not\in R$. Using again Lemma \ref{1} on $E_0$, we conclude that the contraction of $R$ is finite on $E_0$ too. As before, this implies that
 $R$ is of type $(n-1,n-2)^{sm}$.

We are left to show that $E_R\cap (E_1\cup E_2)\neq\emptyset$, where $E_R:=\Lo(R)$. By contradiction, assume  that $E_R\cap(E_1\cup E_2)=\emptyset$. 

Then by \cite[Lemmas 3.1.8 and 3.1.7]{codim} we deduce that there exists a linear subspace $L\subset\N(X)$, \emph{of codimension $3$,} such that: 
$$L\subseteq \N(E_0,X)\cap E_1^{\perp}\cap E_2^{\perp}\cap E_R^{\perp}
\subseteq  D_1^{\perp}\cap E_1^{\perp}\cap E_2^{\perp}\cap E_R^{\perp}$$
(recall that $E_0\cap D_1=\emptyset$, hence $\N(E_0,X)\subset D_1^{\perp}$).

This implies that $[D_1],[E_1],[E_2],[E_R]$ are linearly dependent in $\Nu(X)$, and there exist $\lambda_i\in\Q$, not all zero, such that
$$\lambda_0D_1+\lambda_1E_1+\lambda_2E_2+\lambda_3E_R\equiv 0.$$
Intersecting with $e_j$ ($j=1,2$) and $e_R$, we get $\lambda_j=\lambda_0D_1\cdot e_j$ and  $\lambda_3=\lambda_0D_1\cdot e_R$, so that $\lambda_0\neq 0$, and this yields:
$$D_1+(D_1\cdot e_1)E_1+(D_1\cdot e_2)E_2+(D_1\cdot e_R)E_R\equiv 0.$$
Notice that $D_1$ is distinct from $E_1$, $E_2$, and $E_R$, because these three divisors all intersect $E_0$, while $D_1\cap E_0=\emptyset$. Thus the coefficients above are non-negative;
but this is impossible, because a non-zero effective divisor cannot be numerically trivial. Therefore $E_R\cap(E_1\cup E_2)\neq \emptyset$.
\end{proof}
\begin{lemma}\label{indet}
Let $X$ be a $\Q$-factorial projective variety with Gorenstein terminal singularities, $\phi\colon X\dasharrow X'$ a sequence of flips of $K$-negative extremal rays, and $L\subset X'$ the indeterminacy locus of $\phi^{-1}$. Then $\codim L\geq 3$. 
\end{lemma}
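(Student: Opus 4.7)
I would argue by induction on the length $k$ of the flip sequence $\phi=\sigma_{k-1}\circ\cdots\circ\sigma_0$, where each $\sigma_i\colon X_i\dasharrow X_{i+1}$ is a single $K_{X_i}$-negative flip, with $X_0=X$ and $X_k=X'$. The key preliminary observation is that \emph{a $\Q$-factorial Gorenstein terminal variety $Y$ is smooth in codimension $2$}: successive general hyperplane sections remain Gorenstein (by adjunction) and terminal (by Bertini), reducing to a Gorenstein terminal surface, which is smooth. Hence $\codim\Sing(Y)\geq 3$. Since the $\Q$-factorial Gorenstein terminal hypothesis is preserved under $K$-negative flips (standard MMP, see e.g.\ \cite[\S 3.7]{kollarmori}), each $X_i$ has this property and in particular each $\Sing(X_i)$ has codim $\geq 3$.

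The base case $k=1$ reduces to the following sublemma: \emph{for any small birational contraction $f\colon Y\to Z$ from a $\Q$-factorial Gorenstein terminal variety $Y$ to a normal variety $Z$, $\codim_Y\Exc(f)\geq 3$.} Indeed, the indeterminacy of $\sigma_0^{-1}$ is exactly $\Exc(\zeta'_0)$, where $\zeta'_0\colon X_1\to Z_0$ is the flipped small contraction and $X_1$ is Gorenstein terminal. To prove the sublemma I would set $U:=Y\setminus\Sing(Y)$, so that $\codim(Y\setminus U)\geq 3$; the restriction $f|_U$ is birational from a smooth variety to a normal one, so by Zariski's purity of the exceptional locus (\cite[Lemma~2.62]{kollarmori} in its local form) $\Exc(f|_U)$ is either empty or pure of codim $1$ in $U$. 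The smallness of $f$ rules out the second alternative, so $\Exc(f)\cap U=\emptyset$, i.e.\ $\Exc(f)\subseteq\Sing(Y)$, which has codim $\geq 3$.

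For the inductive step I would write $\phi=\sigma_{k-1}\circ\phi'$ with $\phi'\colon X_0\dasharrow X_{k-1}$, and let $L'\subset X_{k-1}$ and $L_{k-1}\subset X_k$ denote the indeterminacy loci of $(\phi')^{-1}$ and $\sigma_{k-1}^{-1}$ respectively, both of codim $\geq 3$ by induction and the base case. Since $\sigma_{k-1}$ restricts to an isomorphism between the complements of the flipping and flipped loci (each of codim $\geq 2$), the set $\wi{L'}\subset X_k$, defined as the closure of $\sigma_{k-1}(L'\setminus\Exc(\zeta_{k-1}))$, has the same dimension as $L'$, hence $\codim\wi{L'}\geq 3$. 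Unwinding the definition of indeterminacy for a composition yields $L\subseteq L_{k-1}\cup\wi{L'}$, giving $\codim L\geq 3$.

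The main obstacle, I expect, is the sublemma's appeal to Zariski's purity applied to the non-proper restriction $f|_U$: one must verify the local form of purity, which is available because at each $y\in U$ the local ring $\mathcal{O}_{Y,y}$ is regular and $\mathcal{O}_{Z,f(y)}$ is normal, so purity holds on a small neighborhood of $y$. A secondary technical issue is ensuring that Gorensteinness genuinely propagates through every step of the $K$-negative flip sequence in the specific setting used by the paper; this is standard but should be checked, as without it the inductive hypothesis degenerates in the middle of the chain.
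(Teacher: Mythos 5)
There is a genuine gap, and it sits exactly where you flagged your ``main obstacle'': the purity statement you invoke is false in the form you need. Purity of the exceptional locus ([Koll\'ar--Mori, Cor.~2.63]) requires ($\Q$-)factoriality of the \emph{target}, not regularity of the source; regularity of $\mathcal{O}_{Y,y}$ together with normality of $\mathcal{O}_{Z,f(y)}$ does not imply that $\Exc(f)$ has pure codimension one. The standard counterexample is a small resolution $Y\to Z$ of the $3$-fold node $Z=\{xy=zw\}$: here $Y$ is smooth (hence $\Q$-factorial, Gorenstein, terminal), $Z$ is normal, $f$ is small, and $\Exc(f)\cong\pr^1$ has codimension $2$. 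This refutes your sublemma as stated, so the base case collapses; and the flaw cannot be repaired by a better purity reference, because any correct argument must use the $K$-negativity of the flip and the Gorenstein hypothesis on the \emph{original} $X$ in an essential way. A second, subsidiary problem is the deferred claim that Gorenstein-ness propagates along the flip sequence: this is not automatic (only terminality and $\Q$-factoriality are preserved in general), and your induction needs each intermediate $X_i$ to be Gorenstein.

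The paper's proof avoids both issues by working with discrepancies and never inducting on the chain. Given an irreducible $\Lambda\subset X'$ of codimension $2$, one blows up $\Lambda$ (which lies generically in the smooth locus since $X'$ is terminal) to produce a divisor $D$ over $X'$ with center $\Lambda$ and $a(D,X')=1$. Monotonicity of discrepancies under $K$-negative flips ([Koll\'ar--Mori, Lemma~3.38]) gives $a(D,X)\leq a(D,X')=1$, while the Gorenstein terminal hypothesis on $X$ forces $a(D,X)$ to be a positive integer, hence $a(D,X)=a(D,X')$. The strict-inequality clause of the same lemma then shows $\Lambda$ cannot be contained in the indeterminacy locus $L$, so $\codim L\geq 3$. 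If you want to salvage your structure, you would have to replace the purity argument in the base case by precisely this discrepancy computation for the flipped contraction --- at which point the induction becomes unnecessary.
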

\begin{proof}
Let $\Lambda\subset X'$ be an irreducible closed subset of codimension $2$. Notice that $X'$ has terminal singularities, in particular $\codim\Sing(X')\geq 3$, so that $X'$ is smooth at the general point of $\Lambda$. Let $\mu\colon X''\to X'$ be the blow-up of $\Lambda$, and let $D\subset X''$ be the unique irreducible component of $\Exc(\mu)$ which dominates $\Lambda$. Then $D$ is a divisor over $X'$, with center $\Lambda$ on $X'$,  and discrepancy $a(D,X')=1$ (see \cite[Def.~2.24 and Def.~2.25]{kollarmori}). By \cite[Lemma 3.38]{kollarmori} we have $a(D,X)\leq a(D,X')$. On the other hand, since $X$ has Gorenstein terminal singularities, it has integral and positive discrepancies, so that  $a(D,X)\geq 1$ and hence  $a(D,X)= a(D,X')$. Again by \cite[Lemma 3.38]{kollarmori}, this implies that $\Lambda$  cannot be contained in $L$. This shows that $\codim L\geq 3$. 
\end{proof}
\begin{proposition}\label{small}
Let $X$ be a smooth Fano variety, $X'$ a normal and $\Q$-factorial projective variety, $f\colon X\dasharrow X'$ a contracting birational map, and $L\subset X'$ the indeterminacy locus of $f^{-1}$. Then we have the following: 
\begin{enumerate}[$(i)$]
\item
$f$ can be factored as a sequence of divisorial contractions or flips of $K$-negative extremal rays;
\item
$X'$ has terminal singularities, $\Sing(X')\subseteq L$, and for every irreducible curve $\Gamma\subset X'$ not contained in $L$  we have $-K_{X'}\cdot \Gamma \geq 1$, with strict inequality if $\Gamma\cap L\neq \emptyset$;
\item if  $\ph\colon X'\to Y$ is an elementary contraction which is not $K$-negative, then $\ph$ is small and $\Exc(\ph)\subseteq L$;
\item
if $\ph\colon X'\to Y$ is an elementary birational contraction with fibers of dimension at most one, then $\Exc(\ph)\cap L$ is a union of fibers of $\ph$.
If moreover $\ph$ is small, then  $\Exc(\ph)\subseteq L$.
\end{enumerate}
\end{proposition}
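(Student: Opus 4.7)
The plan is to establish the four parts in order: (i) via an MMP with scaling, (ii) via a discrepancy computation on a common resolution, (iii) as an immediate corollary of (ii), and (iv) by a fiber-saturation argument using (iii); the last of these will be the main obstacle. For (i), since $X$ is a smooth Fano it is a Mori dream space by \cite[Cor.~1.3.2]{BCHM}, so the factorization of $f$ into flips and elementary divisorial contractions guaranteed by the definition can be rerun as an MMP with scaling of $-K_X$. Ampleness of $-K_X$ on the starting variety forces every ray contracted or flipped to be $K$-negative, and termination follows from \cite[Rem.~3.10.9]{BCHM}, just as in \cite[Prop.~2.4]{codim}.

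For (ii), write the factorization as $X = X_1 \dashrightarrow \cdots \dashrightarrow X_k = X'$ with every step $K$-negative. An induction on the length shows each $X_i$ is $\Q$-factorial with terminal singularities, so $X'$ is terminal; and for $x' \in X' \smallsetminus L$, the map $f^{-1}$ realizes a local isomorphism between a neighborhood of $x'$ and an open subset of the smooth $X$, so $X'$ is smooth at $x'$. For the curve estimate, take a smooth common resolution $p\colon W \to X$, $q\colon W \to X'$; the negativity lemma for $K$-negative MMP yields $p^{*}K_X = q^{*}K_{X'} + F$ with $F$ effective and $q$-exceptional (using crucially that $f^{-1}$ contracts no divisor, so every $p$-exceptional divisor is also $q$-exceptional). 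For an irreducible curve $\Gamma \subset X'$ with $\Gamma \not\subset L$, its strict transform $\tilde\Gamma \subset W$ is not $q$-exceptional, hence $p_{*}\tilde\Gamma$ is a nonzero effective $1$-cycle on $X$, and
\[
-K_{X'}\cdot\Gamma \;=\; -K_X\cdot p_{*}\tilde\Gamma \;+\; F\cdot\tilde\Gamma \;\geq\; 1,
\]
the first summand being $\geq 1$ by ampleness of $-K_X$ on the smooth Fano $X$, and the second being $\geq 0$ with strict inequality precisely when $\tilde\Gamma$ meets $\mathrm{Supp}(F)$, i.e.\ when $\Gamma$ meets $L$.

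Part (iii) is an immediate consequence: for an elementary contraction $\phi\colon X' \to Y$ with $K_{X'}\cdot R \geq 0$ where $R = \NE(\phi)$, no irreducible curve $\Gamma$ with $[\Gamma]\in R$ can lie outside $L$ (otherwise (ii) gives $-K_{X'}\cdot\Gamma \geq 1 > 0$, contradicting $K_{X'}\cdot R \geq 0$); hence $\Exc(\phi) = \Lo(R) \subseteq L$, and since $\codim L \geq 2$ the contraction $\phi$ cannot be divisorial, so it must be small.

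For (iv), if $\phi$ is not $K$-negative then (iii) already gives $\Exc(\phi) \subseteq L$ and both assertions follow. So assume $\phi$ is $K$-negative elementary birational with fibers of dimension $\leq 1$; then $\phi$ is either a divisorial contraction onto a smooth codimension-two center in $Y$ or a small flipping contraction. The task is to show that every fiber $F$ of $\phi$ meeting $L$ is entirely contained in $L$, and that in the small case $\Exc(\phi) \subseteq L$. The plan is to apply (iii) after composing $f$ with further $K$-negative MMP steps: in the small case, $\phi$ admits a flip $\phi^+\colon X^+ \to Y$, and the composition $X \dashrightarrow X^+$ is again a contracting birational map with a longer $K$-negative factorization, whose inverse indeterminacy locus $L^+ \subset X^+$ contains the flipped curves and the strict transform of $L \smallsetminus \Exc(\phi)$; the curve inequality of (ii) applied to this extended factorization should force all of $\Exc(\phi)$ into $L$. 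In the divisorial case, one runs the analogous argument after the divisorial contraction, this time for $X \dashrightarrow Y$. Making the fiber-saturation argument precise---in particular tracking how $L$ transforms through the flip or divisorial step, and handling possibly reducible or non-reduced fibers on the non-Gorenstein $X'$---is the main technical obstacle.
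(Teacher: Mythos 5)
Your parts (i)--(iii) are essentially sound and close to the paper's argument: the paper proves (ii) and (iii) by citing \cite[Lemma 3.38]{kollarmori} directly (your common-resolution computation is the same fact unpacked), and proves (i) by induction on $\rho_X-\rho_{X'}$, peeling off the last divisorial contraction and using Mori dream space factorizations, running the MMP with scaling only in the isomorphism-in-codimension-one case; your ``rerun the given factorization with scaling'' is under-justified as phrased (an arbitrary factorization has no reason to be $K$-negative; what one actually runs is an MMP for $f^*A$, $A$ ample on $X'$, with scaling of $-K_X$, and one must argue it terminates at $X'$), but the idea is recoverable. Two small points in (ii): being non-$q$-exceptional does not by itself give $p_*\tilde\Gamma\neq 0$; you need that the transform in $X$ of a curve not contained in $L$ is again a curve, which follows from the step-by-step structure of the factorization. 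And the ``precisely when $\Gamma$ meets $L$'' requires knowing that $q(\operatorname{Supp}F)\supseteq L$, which is again the content of \cite[Lemma 3.38]{kollarmori}.

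The genuine gap is (iv). Your plan --- compose $f$ with the flip (or with the divisorial contraction) and apply (ii)/(iii) to the longer factorization --- does not produce the required conclusions. What it yields is that the \emph{flipped} locus $\Exc(\ph^+)\subseteq X^+$ lies in the new indeterminacy locus $L^+$ (which is automatic, since flipped curves are $K_{X^+}$-positive), but this says nothing about whether $\Exc(\ph)\subseteq L$ back on $X'$: the sets $L$ and $L^+$ live on different varieties and the curves of $\Exc(\ph)$ have no transform on $X^+$. The two ingredients you are missing are the following. First, for an irreducible component $C$ of a one-dimensional fiber of a $K$-negative elementary contraction with $C\not\subseteq\Sing(X')$, one has the length bound $-K_{X'}\cdot C\leq 1$ (\cite[(2.3.2)]{mori}, \cite[Lemma 1(ii)]{ishii}); combined with the \emph{strict} inequality $-K_{X'}\cdot C>1$ from (ii) for curves meeting but not contained in $L$, this forces $C\subseteq L$ whenever $C\cap L\neq\emptyset$, and connectedness of fibers gives the fiber-saturation. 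Second, for ``small $\Rightarrow\Exc(\ph)\subseteq L$'': set $Y_0:=Y\smallsetminus\ph(L)$ and $X_0:=\ph^{-1}(Y_0)$; then $X_0$ is smooth by (ii), and $\ph_{|X_0}$ is a $K$-negative elementary birational contraction with fibers of dimension at most one on a \emph{smooth} variety, hence divisorial by \cite[Th.~4.1]{AWaview}; so a small $\ph$ can have no nontrivial fiber over $Y_0$, which together with the fiber-saturation gives $\Exc(\ph)\subseteq L$. Your parenthetical claim that a $K$-negative divisorial $\ph$ with one-dimensional fibers is ``a blow-up of a smooth codimension-two center'' is also not available a priori on the singular $X'$; it is precisely the passage to the smooth locus above that makes such classification results usable.
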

\begin{proof}
Suppose first that $(i)$ holds.
Then $X'$ has terminal singularities, $\Sing(X')\subseteq L$, and if $\Gamma$ is as in $(ii)$ and $\w{\Gamma}\subset X$ is its transform, we have:
$$
-K_{X'}\cdot \Gamma\geq -K_X\cdot\w{\Gamma}\geq 1,\text{ and $-K_{X'}\cdot \Gamma> 1$ if $\Gamma\cap L\neq\emptyset$.}
$$
This follows from
\cite[Lemma 3.38]{kollarmori}. So $(ii)$ and $(iii)$ hold.

\medskip

To show $(i)$, we proceed by induction on $\rho_X-\rho_{X'}$. 

Suppose first that $\rho_X=\rho_{X'}$, so that  $f$ is an isomorphism in codimension one. Let $A$ be an ample Cartier divisor in $X'$, and consider the divisor $f^*(A)$ in $X$ (defined as follows: consider  $f^*(A)$ in the open subset where $f$ is regular, and then take the closure in $X$). Then $f^*(A)$ is a movable divisor in $X$, and any MMP for $f^*(A)$ yields a factorization of $f$ as a sequence of flips. Since $X$ is Fano, we can use 
a MMP with scaling of $-K_X$ (see \cite[Rem.~3.10.9]{BCHM} and \cite[Prop.~2.4]{codim}). This gives a factorization of $f$ as a sequence of flips of $K$-negative extremal rays.

Suppose now that  $\rho_X>\rho_{X'}$, and consider any  factorization of $f$ as a  (finite) sequence of elementary divisorial contractions and flips.  
Let $\sigma\colon Z\to Z'$ be 
 the last divisorial contraction occurring in the sequence; then $f$ factors as
  $X\dasharrow Z \stackrel{g}{\dasharrow}X'$, where $g$ is a contracting rational map given by $\sigma$  followed possibly by some flips. 

Notice that $Z$ is a normal and $\Q$-factorial projective variety, and it is a Mori dream space (see \cite[Prop.~1.11(2)]{hukeel}). By the properties of Mori dream spaces (see \emph{e.g.} \cite[\S  2.1 and references therein]{eff}), we can factor $g$ as $Z\stackrel{h}{\dasharrow} T \stackrel{\sigma'}{\to}X'$, where $T$ is a normal and $\Q$-factorial projective variety, $h$ is given by a finite sequence of flips, and $\sigma'$ is a birational morphism.

We remark that $\sigma'$ is elementary, because $\rho_T=\rho_Z=\rho_{Z'}+1=\rho_{X'}+1$. Moreover $\sigma'$ cannot be small because $X'$ is $\Q$-factorial (see \cite[Cor.~2.63]{kollarmori}), so that $\sigma'$ is an elementary divisorial contraction. 

In the end we get a factorization of $f$ as:
$$\xymatrix{
{X}\ar@/^1pc/@{-->}[rr]^{f}
\ar@{-->}[r]_{f'}& 
{T}\ar[r]_{\sigma'}&{X'}
}$$
where  $f'$ is a contracting birational map.
By induction, $f'$ factors as  a sequence of divisorial contractions or flips of $K$-negative extremal rays. Moreover, by $(iii)$, $\sigma'$ is $K$-negative too. This yields $(i)$.

\medskip

We show $(iv)$. If $\ph$ is not $K$-negative, we have $\Exc(\ph)\subseteq L$ by $(iii)$.
Let us assume that $\ph$ is $K$-negative.

Let $C\subset X'$ be an irreducible component of a 
fiber of $\ph$. We show that either $C\cap L=\emptyset$, or $C\subseteq L$.

Indeed, suppose that $C$ intersects $L$. If $C\subseteq\Sing(X')$, then $C$ is contained in $L$. If instead  $C\not\subseteq\Sing(X')$, then $-K_{X'}\cdot C\leq 1$, see \cite[\S 2, (2.3.2)]{mori} and \cite[Lemma 1(ii)]{ishii}.
This implies again that $C\subseteq L$ by $(ii)$.

Now let $F$ be a non-trivial fiber of $\ph$. Since $F$ is connected, we have that either $F\cap L=\emptyset$, or $F\subseteq L$. This shows that $\Exc(\ph)\cap L$ is a union of fibers of $\ph$.

Finally suppose that $\Exc(\ph)\not\subseteq L$. We set $Y_0:=Y\smallsetminus\ph(L)$, $X_0:=\ph^{-1}(Y_0)$, and $\ph_0:=\ph_{|X_0}\colon X_0\to Y_0$. Then $X_0$ is smooth and $\ph_0$ is a  birational, $K$-negative contraction, with fibers of dimension at most one. Then $\ph_0$ is divisorial (see \cite[Th.~4.1 and references therein]{AWaview}), and the same holds for $\ph$.
\end{proof}
\section{Constructing a fibration in  Del Pezzo surfaces}\label{secdelpezzo}
In this section we show the following result, which is an intermediate step 
 towards Th.~\ref{main}.
\begin{thm}\label{intermediate}
Let $X$ be a smooth Fano variety with $c_X=2$.
Then one of the following holds:
\begin{enumerate}[$(i)$]
\item  there exist a prime divisor $D\subset X$ with $c(D)=2$ and a special MMP of type $(b)$ for $-D$;
\item there is an equidimensional, quasi-elementary\footnote{See \ref{notation} for the definition of quasi-elementary.}
 fibration in Del Pezzo surfaces $X\to Y$, where $Y$ is factorial, has  canonical singularities, $\codim\Sing(Y)\geq 3$, and $\rho_X-\rho_Y=3$.
\end{enumerate}
\end{thm}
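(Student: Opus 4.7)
My approach is to argue by contradiction against (i): assume every special MMP for $-D$ (with $c(D)=2$) is of type (a), and construct the Del Pezzo fibration in (ii). Fix $D$ with $c(D)=2$ and a special MMP for $-D$. Under the hypothesis it is of type (a), so \ref{typea} supplies two disjoint exceptional $\pr^1$-bundles $E_1,E_2\subset X$ with $D\cdot e_i>0$, $c(E_i)=2$, and $\dim\N(D\cap E_i,X)=\rho_X-3$.

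Since $c(E_i)=2$ for $i=1,2$, the contradiction hypothesis applies with $E_i$ in place of $D$, and a special MMP for $-E_i$ is again of type (a), producing further disjoint exceptional $\pr^1$-bundles. Iterating this procedure, combined with the incidence constraints on fixed prime divisors from Remarks~\ref{intersection} and \ref{inter2} and Lemma \ref{directsum}, and with the rational polyhedrality of $\Eff(X)$ which bounds the number of fixed divisors, the iteration must stabilize into a configuration meeting the hypotheses of Lemma \ref{4r}. Concretely, one obtains a ``central'' exceptional $\pr^1$-bundle $E_0$ together with $E_1,E_2$ and prime divisors $D_1,D_2$ satisfying $E_0\cdot e_i>0$, $E_i\cdot e_0>0$, $D_i\cap E_i\neq\emptyset$, and $D_i\cap E_0=\emptyset$ for $i=1,2$.

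By Lemma \ref{4r}, every extremal ray $R$ with $E_0\cdot R>0$ is of type $(n-1,n-2)^{sm}$ with $\Lo(R)\cap(E_1\cup E_2)\neq\emptyset$. Running a special MMP for $-E_0$---which by the hypothesis is again of type (a), terminating after two divisorial $(n-1,n-2)^{sm}$-contractions with a $K$-negative elementary fiber-type contraction---and tracking the relevant classes back to $X$ via Lemmas \ref{flips} and \ref{directsum0}, I identify a $K$-negative extremal face $\Sigma\subset\NE(X)$ of dimension three, containing $[e_0],[e_1],[e_2]$. Its contraction $\psi\colon X\to Y$ is then of fiber type with $\rho_X-\rho_Y=3$, and is quasi-elementary because every contracted curve is numerically equivalent to a combination of classes lying in a general fiber. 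Since each $[e_i]$ has anticanonical degree one and the three classes are linearly independent, the general fiber $F$ is two-dimensional, smooth, with $-K_F=-K_X|_F$ ample, i.e.\ a Del Pezzo surface on which the $E_i$'s cut out $(-1)$-curves. Equidimensionality follows from Lemma \ref{4r} applied to each of the three extremal rays generating $\Sigma$.

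The base $Y$ acquires its properties by factoring $\psi$ via Prop.~\ref{small}: each intermediate divisorial step is $(n-1,n-2)^{sm}$ and hence $K$-negative, so smoothness is lost only in codimension at least three by Lemma \ref{indet}, and canonical Gorenstein singularities propagate through both divisorial and flip steps; factoriality on $Y$ follows from $\Q$-factoriality coupled with the Gorenstein property in codimension two. The main obstacles are the stabilization argument of the second paragraph and the identification of the three-dimensional face in the third: one must show that iterated type-(a) MMPs cannot produce an unbounded cascade of new exceptional $\pr^1$-bundles but must fold back into the pattern of Lemma \ref{4r}, and that the resulting face $\Sigma$ is exactly three-dimensional and contracts onto a base of dimension $n-2$ rather than $n-1$ (the latter corresponding to case (i), which has been excluded).
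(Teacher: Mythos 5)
Your high-level plan coincides with the paper's: assume (i) fails, so every special MMP for $-D$ with $c(D)=2$ is of type $(a)$, manufacture a configuration of exceptional $\pr^1$-bundles to which Lemma \ref{4r} applies, and extract a three-dimensional fiber-type face. But the two places where you explicitly flag ``obstacles'' are exactly where the real content lies, and your proposed ways around them do not work as stated, so I have to count this as a genuine gap rather than a complete proof.

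First, the ``stabilization'' step. You claim that iterating type-$(a)$ MMPs must fold back into the hypotheses of Lemma \ref{4r} because $\Eff(X)$ is polyhedral and so there are finitely many fixed divisors. Finiteness of fixed divisors gives no control on \emph{which} divisors intersect which, nor on the signs $E_i\cdot e_j$; and Lemma \ref{4r} needs not only the incidence pattern but the quantitative conditions $E_0\cdot e_i>0$ \emph{and} $E_i\cdot e_0>0$ plus auxiliary divisors $D_i$ disjoint from $E_0$. In the paper this is Steps \ref{4or5} and \ref{step1}: a short but delicate case analysis running at most four specific MMPs (for $-D$, $-E_0$, $-E_0'$, $-E_1$) and invoking Rem.~\ref{intersection} to force coincidences among the resulting divisors, ending in one of two explicit configurations (four or five $\pr^1$-bundles). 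The hardest point, proving $E_1\cdot e_0>0$ and $E_2\cdot e_0>0$, is handled by a contradiction argument that contracts $\R_{\geq 0}[e_i]$ first and re-invokes the no-type-$(b)$ hypothesis; nothing in your outline substitutes for this.

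Second, the identification of the face $\Sigma$ and of the fiber dimension. A three-dimensional $K$-negative fiber-type face could a priori contract to a base of dimension $n-1$ (a non-elementary conic bundle) --- which is essentially case (i) in disguise --- or be partly birational; linear independence of $[e_1],[e_2]$ and a third class does not force two-dimensional fibers. The paper excludes this by explicitly describing the MMP for $-E_0$ (no flips, $i_1=1$, $i_2=2$), locating two further extremal rays $\w{R}_3,\w{R}_4$ of $\NE(\psi)$ with distinct divisorial loci $E_3\neq E_4$ inside $\Lo(\ph)$, proving $E_1\cap E_3=\emptyset$ and $E_2\cap E_4=\emptyset$ (again by deriving type-$(b)$ MMPs from the contrary), and then showing $\ph$ is finite on the blow-up centers $A_1,A_2$ so that $\dim Y=n-2$; equidimensionality and quasi-elementarity come from this finiteness, not from Lemma \ref{4r} as you assert. (Also, $[e_0]$ is not shown to lie in $\NE(\psi)$; the face is spanned by classes in $E_1,\dotsc,E_4$.) Finally, for the properties of $Y$ you appeal to Prop.~\ref{small} and Lemma \ref{indet}, which concern birational maps and are not applicable to the fiber-type contraction $\ph$; the paper instead applies Lemma \ref{quasiel} to the $K$-negative quasi-elementary contraction $\ph\colon X_3\to Y$ with $X_3$ smooth.
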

Let us outline the strategy of the proof. Suppose that $(i)$ does not hold.
Then the construction explained in paragraphs \ref{basic} and \ref{typea} yields a machinery that produces lots of pairs of disjoint exceptional $\pr^1$-bundles. After Lemma \ref{4r}, if some exceptional $\pr^1$-bundles intersect in a convenient way, we have a good description of extremal rays having positive intersection with these divisors. 
Thus the strategy is to produce a bunch of exceptional $\pr^1$-bundles having a good configuration (the reader can look at fig.~\ref{fig2} and \ref{fig1} to get an idea); this is achieved in steps \ref{4or5} and \ref{step1}. Then, in step \ref{delpezzo},  we use Lemma \ref{4r} to describe explicitly a special MMP for $-E_0$, where $E_0$ is one of these exceptional $\pr^1$-bundles. We show that the MMP has no flips and only two divisorial contractions; in the end this special MMP yields a fibration in Del Pezzo surfaces.
\begin{proof}[Proof of Th.~\ref{intermediate}]
We suppose  that $(i)$ does not hold, and show $(ii)$. This means that throughout the proof we assume the following:
\stepcounter{thm}
\begin{equation}\label{noa}
\text{\em for any prime divisor $D\subset X$ with $c(D)=2$, every special MMP for $-D$ is of type $(a)$.}
\end{equation}

So let $D\subset X$ be a prime divisor with $c(D)=2$. We consider a special MMP of type $(a)$ for $-D$, and this yields
 two disjoint exceptional $\pr^1$-bundles $E_0$ and $E_0'$ in $X$, such that $c(E_0)=c(E_0')=2$ (see \ref{typea}).

Now we repeat the construction with $E_0$: we consider a special MMP for $-E_0$, and get a new pair of disjoint exceptional $\pr^1$-bundles $E_1$ and $E_2$, such that $E_0\cdot e_1>0$ and $E_0\cdot e_2>0$, and $c(E_1)=c(E_2)=2$. Notice that $E_0,E_0',E_1,E_2$ are distinct.

Since $E_0\cap E'_0=\emptyset$, by Rem.~\ref{intersection} $E_0'$ must intersect  at least one of $E_1$, $E_2$. 

Moreover, since $E_1$ and $E_2$ are disjoint and both intersect $E_0$, we have two possibilities (see Rem.~\ref{e}): either $E_0\cap E_1$ and $E_0\cap E_2$ are both horizontal for the $\pr^1$-bundle on $E_0$ (that is: $E_1\cdot e_0>0$ and $E_2\cdot e_0>0$), or  $E_0\cap E_1$ and $E_0\cap E_2$ are both union of fibers $e_0$ (namely  $E_1\cdot e_0=E_2\cdot e_0=0$).
\begin{step}\label{4or5}
Up to replacing $E_0,E_0'$ with another pair,
we have one of the following cases:
\begin{enumerate}[$(i)$]
\item 
$E_0'$ intersects both $E_1$ and $E_2$;
\item  $E_0'\cap E_1\neq\emptyset$, $E_0'\cap E_2=\emptyset$, and  there exists  an exceptional $\pr^1$-bundle $E_0''\subset X$ such that: 
\stepcounter{thm}
\begin{equation}\label{listdisj}
c(E_0'')=2,\quad (E_0\cup E_1)\cap E_0''=\emptyset,\quad E_0''\cap E_2\neq\emptyset,\quad
E_0''\cap E_0'\neq\emptyset.\end{equation}
Moreover, whenever two among the five $\pr^1$-bundles $E_0,E_1,E_2,E_0',E_0''$ intersect each other, each divisor has positive intersection with the fiber of the $\pr^1$-bundle on the other divisor
(see fig.~\ref{fig2}).
\stepcounter{thm}
\begin{figure}[h]
\begin{center} 
\scalebox{0.3}{\includegraphics{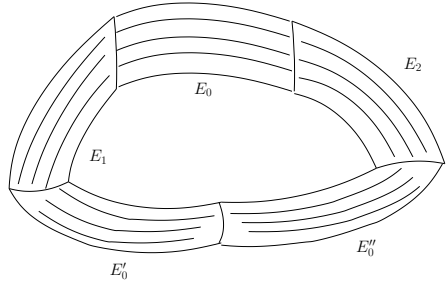}}
\caption{The case of $5$ exceptional $\pr^1$-bundles (step \ref{4or5}$(ii)$).}\label{fig2}
\end{center}
\end{figure}
\end{enumerate}
\end{step}
\begin{proof}[Proof of step \ref{4or5}]
We consider a special MMP for $-E_0'$, and we get
 disjoint exceptional $\pr^1$-bundles $F_1,F_2$ such that $E_0'\cdot f_1>0$, $E_0'\cdot f_2>0$, and $c(F_1)=c(F_2)=2$ (see \eqref{noa} and \ref{typea}).

If $E_0'$ intersects both $E_1$ and $E_2$, we have $(i)$. If $E_0$ intersects both $F_1$ and $F_2$, we just exchange $E_0$ and $E_0'$ and get again $(i)$.

Otherwise, suppose that $E_0'\cap E_2=\emptyset$ and $E_0\cap F_2=\emptyset$.
Then $E_0'$ is disjoint from $E_0$ and $E_2$, hence
$E_0'\cap E_1\neq\emptyset$  by Rem.~\ref{intersection}; similarly $E_0\cap F_1\neq\emptyset$.

If $F_2$ intersects both $E_1$ and $E_2$, we replace $E_0'$ by $F_2$ and we have again $(i)$. 

Finally, suppose that $F_2$ is disjoint from $E_1$ or $E_2$. Since
$F_2$ is already disjoint from the divisors $F_1$ and $E_0$, and $E_0$ is distinct from $E_1$ and $E_2$, by Rem.~\ref{intersection}
$F_1$ must coincide with $E_1$ or $E_2$. As  $E_0'\cap E_2=\emptyset$ and  $E_0'\cap F_1\neq\emptyset$, we must have $E_1=F_1$ and $F_2\cap E_2\neq\emptyset$.

We set   $E_0'':=F_2$. Then  we have \eqref{listdisj}, and by construction  $E_0\cdot e_1>0$, $E_0\cdot e_2>0$, and $E_0'\cdot e_0''>0$.  

Since $E_0\cap E_0'=\emptyset$, $E_0\cdot e_1>0$, and $E_0'\cap E_1\neq\emptyset$, we must have $E_0'\cdot e_1>0$ by Rem.~\ref{e}.
Similarly we see that $E_0''\cdot e_2>0$ and $E_2\cdot e_0''>0$.

We run a special MMP for $-E_1$. This yields two disjoint exceptional $\pr^1$-bundles $G_1,G_2$ such that $E_1\cdot g_i>0$ for $i=1,2$ (see \eqref{noa} and \ref{typea}).  

If $E_0''$ or $E_2$ intersect both $G_1$ and $G_2$, we replace $E_0$ by $E_1$, and we have $(i)$. 
Otherwise, suppose that $E_0''\cap G_1=\emptyset$. Since $E_0''$ is already disjoint from $E_0$ and $E_1$, and $G_1\neq E_1$,
by Rem.~\ref{intersection} we must have $G_1=E_0$. 
We have $E_2\cap E_0\neq\emptyset$, and $E_2$ does not intersect  both $G_1$ and $G_2$, therefore $E_2\cap G_2=\emptyset$, and again by Rem.~\ref{intersection} we get $G_2=E_0'$.
 
This shows that up to replacing the $\pr^1$-bundle structures on $E_0$ and $E_0'$ with other ones, we can assume that $E_1\cdot e_0>0$ and $E_1\cdot e_0'>0$.

By Rem.~\ref{e}, this implies also that $E_2\cdot e_0>0$ and $E_0''\cdot e_0'>0$, therefore we have $(ii)$.
\end{proof}
\begin{step}\label{step1} Suppose that $E_0'$ intersects both $E_1$ and $E_2$ (case $(i)$ in step \ref{4or5}).
Then,  whenever two among the four $\pr^1$-bundles $E_0,E_1,E_2,E_0'$ intersect each other, each divisor has positive intersection with the fiber of the $\pr^1$-bundle on the other divisor (see fig.~\ref{fig1}).
\stepcounter{thm}
\begin{figure}[h]
\begin{center} 
\scalebox{0.3}{\includegraphics{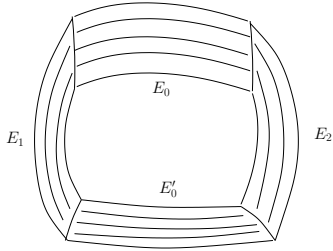}}
\caption{The case where $E_0'$ intersects both $E_1$ and $E_2$.}\label{fig1}
\end{center}
\end{figure}
\end{step}
\begin{proof}[Proof of step \ref{step1}]
By construction we have $E_0\cdot e_1>0$ and $E_0\cdot e_2>0$.
Since $E_0\cap E_0'=\emptyset$ and $E_0\cdot e_1>0$, by Rem.~\ref{e} we must also have  $E'_0\cdot e_1>0$, and similarly $E_0'\cdot e_2>0$.

By Lemma \ref{directsum} we have:
\stepcounter{thm}
\begin{equation}\label{eq0}
\N(E_1,X)\cap\N(E_2,X)=\N(E_0\cap E_1,X)\cap\N(E_0\cap E_2,X).
\end{equation}
On the other hand, as $E_0\cap E_0'=\emptyset$, 
we also have $\N(E_0,X)\subset (E_0')^{\perp}$, and we conclude that:
$$
\N(E_1,X)\cap\N(E_2,X)\subset (E_0')^{\perp}.
$$

As $E_0'\cdot e_0'=-1$, the class $[e_0']$ cannot belong to $(E_0')^{\perp}$, and we deduce that
$[e_0']\not\in\N(E_1,X)\cap\N(E_2,X)$. Then  Rem.~\ref{e} implies that  $E_1\cdot e'_0>0$ and $E_2\cdot e'_0>0$.

We are left to show that  $E_1\cdot e_0>0$ and $E_2\cdot e_0>0$. By contradiction, suppose otherwise. Then, 
again by Rem.~\ref{e}, we have
 $E_1\cdot e_0=E_2\cdot e_0=0$ and 
$$
[e_0]\in\N(E_1,X)\cap\N(E_2,X).
$$

We have $\N(E_i,X)\supseteq\N(E_0'\cap E_i,X)$ for $i=1,2$, hence
$$\N(E_1,X)\cap\N(E_2,X)\supseteq\N(E_0'\cap E_1,X)\cap\N(E_0'\cap E_2,X).$$
On the other hand:
$$\N(E_0'\cap E_1,X)\cap\N(E_0'\cap E_2,X)\subseteq E_0^{\perp},$$
because $E_0\cap E_0'=\emptyset$. As $E_0\cdot e_0=-1$, we see that 
 $[e_0]$ cannot be contained in $\N(E_0'\cap E_1,X)\cap\N(E_0'\cap E_2,X)$, and hence there is a \emph{strict} inclusion:
$$\N(E_1,X)\cap\N(E_2,X)\supsetneq\N(E_0'\cap E_1,X)\cap\N(E_0'\cap E_2,X).$$

By \cite[Lemma 3.1.8]{codim}, the subspaces  $\N(E_0'\cap E_1,X)$ and $\N(E_0'\cap E_2,X)$ have both dimension $\rho_X-3$, and are contained in $\N(E_0',X)$ which has dimension $\rho_X-2$. Thus their intersection
$\N(E_0'\cap E_1,X)\cap\N(E_0'\cap E_2,X)$ has dimension at least $\rho_X-4$, 
and this yields
$$\dim \bigl(\N(E_1,X)\cap\N(E_2,X)\bigr)\geq \rho_X-3,$$
namely by \eqref{eq0}:
$$\dim\bigl(\N(E_0\cap E_1,X)\cap\N(E_0\cap E_2,X)\bigr)\geq\rho_X-3.$$
Again, $\N(E_0\cap E_1,X)$ and $\N(E_0\cap E_2,X)$ have both dimension $\rho_X-3$ (see \ref{typea}), and  using again \eqref{eq0} we conclude that:
 $$\N(E_0\cap E_1,X)=\N(E_0\cap E_2,X)=\N(E_1,X)\cap\N(E_2,X).$$

Let $i\in\{1,2\}$. Since $E_1\cap E_2=\emptyset$, we have $\N(E_{3-i},X)\subseteq E_{i}^{\perp}$, and hence:
$$\N(E_0\cap E_i,X)\subset E_i^{\perp}.$$
After \cite[Rem.~3.1.4]{codim}, this implies that $\R_{\geq 0}[e_i]$ is an extremal ray of type $(n-1,n-2)^{sm}$, and if $\alpha_i\colon X\to Y_i$ is its contraction, then $Y_i$ is Fano.

Since $Y_i$ is smooth and Fano, we can consider a special MMP for $-\alpha_i(E_0')$ in $Y_i$. Together with $\alpha_i$, this yields a special MMP for $-E_0'$ in $X$, where
the first extremal ray is  $\R_{\geq 0}[e_i]$; by \eqref{noa} this MMP is of type $(a)$. Notice also that the extremal ray  $\R_{\geq 0}[e_i]$ cannot be contained in $\N(E_0',X)$, because $\N(E_0',X)\subset E_0^{\perp}$, while $E_0\cdot e_i>0$. Therefore one of the two special indices of this MMP of type $(a)$ is $1$ (see \ref{typea}), 
  and in this way
we get two pairs of disjoint exceptional $\pr^1$-bundles $E_1,\w{E}_2$ (for $i=1$) and 
$\w{E}_1,E_2$ (for $i=2$), all distinct from $E_0'$. 

If $E_0$ intersects $\w{E}_2$, then we get $E_1\cdot e_0>0$ by the first part of the proof, hence a contradiction. Similarly if $E_0$ intersects $\w{E}_1$.

Otherwise, $E_0$ is disjoint from  $E_0',\w{E}_1,\w{E}_2$, and  Rem.~\ref{intersection} yields  $\w{E}_1=\w{E}_2$. Then $\w{E}_1$ is disjoint from $E_0,E_1,E_2$, and we get again a contradiction by Rem.~\ref{intersection}.
This show that $E_1\cdot e_0>0$ and $E_2\cdot e_0>0$, and concludes the proof of Step \ref{step1}.
\end{proof}
\begin{step}\label{delpezzo} We describe the special MMP for $-E_0$.
\end{step}
This is the main part of the proof of Th.~\ref{intermediate}. 
 The reader should bear in mind that after steps \ref{4or5} and \ref{step1}, we have two possible configurations, with either $4$ exceptional $\pr^1$-bundles as in fig.~\ref{fig1} (if $E_0'$ intersects both $E_1$ and $E_2$), or $5$ exceptional $\pr^1$-bundles as in fig.~\ref{fig2} (if $E_0'$ does not intersect both $E_1$ and $E_2$).

Recall that $E_1$ and $E_2$ are obtained from a special MMP for $-E_0$. 
We need to study extremal rays having positive intersection with $E_0$, and we need also  to study extremal rays having positive intersection with $E_1$ or $E_2$. Thanks to the divisors $E_0'$ and $E_0''$, we can apply Lemma \ref{4r} to any such extremal ray.
\begin{parg}\label{typeofray}
Let $R$ be an extremal ray  with $E_i\cdot R>0$ for some $i\in\{0,1,2\}$. We show that 
 $R$ is of type $(n-1,n-2)^{sm}$, and $\Lo(R)\cap (E_1\cup E_2)\neq\emptyset$. 
Moreover for every $j\in\{0,1,2\}$ the contraction of $R$ is finite on $E_j$, unless $[e_j]\in R$.

\medskip

 Indeed, by steps \ref{4or5} and \ref{step1}, we can apply Lemma \ref{4r} to $R$ (here we use the existence of the divisors $E_0'$ and $E_0''$). In particular,
this implies that $R$ is of type $(n-1,n-2)^{sm}$.

If $E_0\cdot R>0$, the properties above follow from  Lemma \ref{4r}. 

Suppose instead that $E_1\cdot R>0$. Then clearly  
  $\Lo(R)\cap (E_1\cup E_2)\neq\emptyset$. By  Lemma \ref{4r} we know that the contraction of $R$ is finite on $E_1$, and also on $E_0$ unless $[e_0]\in R$. 
Finally the contraction of $R$ must be finite on $E_2$ too, because $E_1\cdot R>0$ and $E_1\cap E_2=\emptyset$.

The case $E_2\cdot R>0$ is analogous.
\end{parg}
\begin{parg}\label{posint}
Let 
$R$ be as in \ref{typeofray} and set $E_R:=\Lo(R)$. 
Suppose that  for some $j\in\{1,2\}$ the half-line $\R_{\geq 0}[e_j]$ is an extremal ray of $X$,  different from $R$, and that
  $E_j\cap E_R\neq\emptyset$.
 Then 
 $E_j\cdot R>0$ and  $E_R\cdot e_j>0$.

\medskip

Indeed we have $E_j\cdot R>0$
because the contraction of $R$ is finite on $E_j$ by \ref{typeofray}. 

To show that $E_R\cdot e_j>0$, we proceed by contradiction. Suppose for simplicity that $j=1$.

If $E_R\cdot e_1=0$, then $E_R$ must contain some curve ${e}_1$, and since $E_0\cdot e_1>0$ and $E_0'\cdot e_1>0$, we have $E_0\cap E_R\neq\emptyset$ and $E_0'\cap E_R\neq\emptyset$. 

We have $E_R\neq E_0$ (because $E_0\cdot e_1>0$) and hence
$[e_0]\not\in R$.  By \ref{typeofray}
the contraction of $R$ must be finite on $E_0$, thus $E_0\cdot R>0$. Since $E_0$ and $E_0'$ are disjoint, this also implies that 
 $E_0'\cdot R>0$ by Rem~\ref{e}.

Thus 
 the contraction of the extremal ray  $\R_{\geq 0}[e_1]$ is not finite on $E_R$, $R\neq\R_{\geq 0}[e_1]$, and $E_0,E_0'$ are disjoint prime divisors having positive intersection with $R$. By 
 Lemma \ref{1} we get $E_0\cdot e_1\leq 0$, a contradiction.
\end{parg}
\begin{parg}\label{i1}
We show that  $i_1=1$ and $[e_1]\in R_1$, the first extremal ray of the special MMP for $-E_0$ (see \ref{basic} and \ref{typea}).

Indeed $R_1$ is an extremal ray of $X$ with $E_0\cdot R_1>0$. 
Therefore  $\Lo(R_{1})\cap (E_1\cup E_2)\neq\emptyset$
by \ref{typeofray}.
Since $E_1\cup E_2$ is contained in the open subset where the birational map
$X\dasharrow X_{i_1}$ is an isomorphism (see \ref{typea}), we conclude that
$i_1=1$, $\Lo(R_1)=E_1$, and $[e_1]\in R_1$.
\end{parg}
\begin{parg}
Let $\sigma_1\colon X\to X_2$ be the contraction of $R_1$. We study the next step of the special MMP in $X_2$, which is given by a birational extremal ray $R_2$ of $X_2$ having positive intersection with $\sigma_1(E_0)$. 
We show that  $i_2=2$ and that $R_2$ contains $[\sigma_1(e_2)]$.

We have $R_2=(\sigma_1)_*(\widetilde{R}_2)$, $\widetilde{R}_2$ an extremal ray of $X$ different from $R_1$, such that $R_1+\widetilde{R}_2$ is a face of $\NE(X)$.

Since  $\sigma_1(E_0)\cdot R_2>0$, the projection formula yields
$$\sigma_1^*(\sigma_1(E_0))\cdot \widetilde{R}_2=\bigl(E_0+(E_0\cdot e_1)E_1\bigr)\cdot \widetilde{R}_2>0,$$
hence either $E_0\cdot \widetilde{R}_2>0$ or $E_1\cdot \widetilde{R}_2>0$. 

By \ref{typeofray}, this implies that $\w{R}_2$ is divisorial, of type $(n-1,n-2)^{sm}$, and that $\w{E}_2\cap (E_1\cup E_2)\neq\emptyset$, where $\w{E}_2:=\Lo(\w{R}_2)$. Moreover the contraction of $\w{R}_2$ must be finite on $E_1$, because $R_1\neq \w{R}_2$.

In particular $R_2$ is divisorial and $\Lo(R_2)=\sigma_1(\w{E}_2)$. If $C\subset X$ is an irreducible curve with class in $\w{R}_2$, we have $[\sigma_1(C)]\in R_2$, and:
$$0>\Lo(R_2)\cdot\sigma_1(C)=\bigl(\w{E}_2+(\w{E}_2\cdot e_1)E_1\bigr)\cdot C=
(\w{E}_2\cdot e_1)(E_1\cdot C)-1.$$
This implies that either $E_1\cdot \w{R}_2=0$, or $\w{E}_2\cdot e_1=0$. 
By \ref{posint}, this can happen only if $E_1\cap\w{E}_2=\emptyset$, therefore
we get $E_2\cap \w{E}_2\neq\emptyset$. This yields
$\Lo(R_2)\cap \sigma_1(E_2)\neq\emptyset$.

Since $\sigma_1(E_2)$ is contained in the open subset where the birational map $X_2\dasharrow X_{i_2}$ is an isomorphism (see \ref{typea}), we conclude that
 $i_2=2$, $\w{R}_2=\R_{\geq 0}[e_2]$,  $R_2=\R_{\geq 0}[\sigma_1(e_2)]$, and $\w{E}_2=E_2$.
\end{parg}
\begin{parg}\label{3rdstep}
Let $\sigma_2\colon X_2\to X_3$ be the contraction of $R_2$. Let us consider the next step of the MMP, and the contraction of the associated ray $\ph\colon X_3\to Y$ (so that a priori $\ph$ could be a birational contraction). We set  $\sigma:=\sigma_2\circ\sigma_1\colon X\to X_3$ and  $\psi:=\ph\circ\sigma\colon X\to Y$.
$$
\xymatrix{X\ar@/_1pc/[drr]_{\psi}\ar[r]_{\sigma_1}\ar@/^1pc/[rr]^{\sigma} &{X_2}\ar[r]_{\sigma_2}&{X_3}\ar[d]^{\ph}\\
&&Y
}
$$
We have that $\sigma(E_0)\cdot\NE(\ph)>0$, and $\NE(\psi)$ is a $3$-dimensional face of $\NE(X)$, containing the face $R_1+\w{R}_2$.
\end{parg}
\begin{parg}\label{3rdray}
Let $R$ be an extremal ray of  $\NE(\psi)$, different from $R_1$ and $\w{R}_2$. We have
$\sigma_*(R)=\NE(\ph)$, thus $\sigma(E_0)\cdot\sigma_*(R)>0$, which yields by the projection formula:
$$\bigl(E_0+(E_0\cdot e_1)E_1+(E_0\cdot e_2)E_2\bigr)\cdot R>0.$$
This means that $E_i\cdot R>0$ for some $i\in\{0,1,2\}$, hence \ref{typeofray} and \ref{posint} apply to $R$. In particular, $R$ is of type 
 $(n-1,n-2)^{sm}$, and $\Lo(R)$ is different from $E_1$ and $E_2$.
\end{parg}
\begin{parg}
Let us consider the extremal rays $\w{R}_3$ and $\w{R}_4$ of $\NE(\psi)$, different from $R_1$ and $\w{R}_2$, such that $R_1+\w{R}_3$ and $\w{R}_2+\w{R}_4$ are faces of $\NE(\psi)$ (see fig.~\ref{cone}). Notice that a priori it may be $\w{R}_3=\w{R}_4$, if $\NE(\psi)$ happens to be simplicial. By \ref{3rdray} $\w{R}_3$ and $\w{R}_4$ are of type $(n-1,n-2)^{sm}$; set $E_i:=\Lo(\w{R}_i)$ for $i=3,4$.
\stepcounter{thm}
\begin{figure}[h]
\begin{center} 
\scalebox{0.4}{\includegraphics{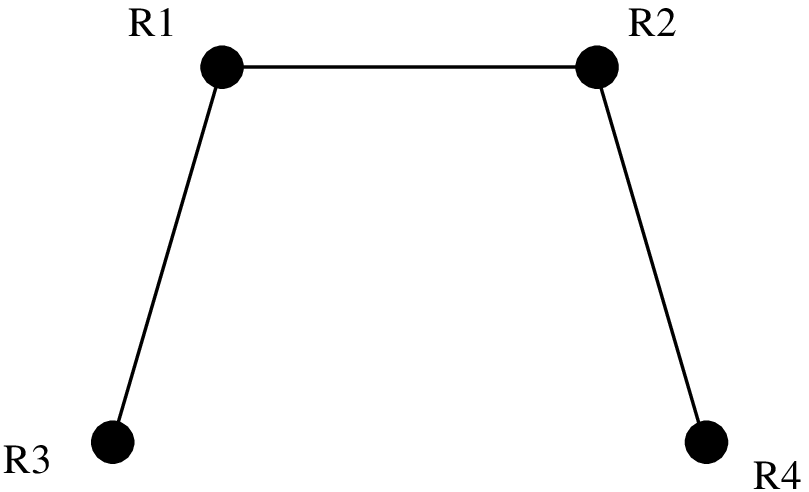}}
\caption{A section of the $3$-dimensional cone $\NE(\psi)$, where dots correspond to extremal rays.}\label{cone}
\end{center}
\end{figure}
\end{parg}
\begin{parg}
We show that $E_1\cap E_3=\emptyset$ and $E_2\cap E_4=\emptyset$.

By contradiction, suppose that 
 $E_1\cap E_3\neq\emptyset$.  Then we have
$E_1\cdot \w{R}_3>0$
and  $E_3\cdot R_1>0$ by \ref{3rdray} and \ref{posint}. 

Since $R_1+\w{R}_3$ is a face of $\NE(X)$, $(\sigma_1)_*(\w{R}_3)$ is
an extremal ray  of $\NE(X_2)$;
let $\zeta\colon X_2\to Z$ be its contraction.
We have
$$\sigma_1(E_3)\cdot \sigma_1(e_3)=\bigl(E_3+(E_3\cdot e_1)E_1\bigr)\cdot e_3=
(E_3\cdot e_1)(E_1\cdot e_3)-1\geq 0,$$
so that $\zeta$ is of fiber type. 

Suppose that $\sigma_1(E_0)\cdot \sigma_1(e_3)>0$.
Then the sequence 
$$X\stackrel{\sigma_1}{\la}X_2\stackrel{\zeta}{\la}Z$$
is a special MMP for $-E_0$ with only one birational map, 
namely $k=2$ in the notation of \ref{basic}, so that the MMP must be of type $(b)$ (see \ref{typea}). This contradicts our assumption
  \eqref{noa}.

If instead $\sigma_1(E_0)\cdot \sigma_1(e_3)\leq 0$, 
by the projection formula we have
$$0\geq \bigl(E_0+(E_0\cdot e_1)E_1\bigr)\cdot e_3=(E_0\cdot e_1)(E_1\cdot e_3)+E_0\cdot e_3.$$
Since $E_0\cdot e_1>0$ and $E_1\cdot e_3>0$, this yields $E_0\cdot e_3<0$ and hence $E_3=E_0$. By \ref{typeofray}, this also 
 implies that $[e_0]\in \w{R}_3$. 

The morphism $\zeta\circ\sigma_1\colon X\to Z$ has a second factorization in elementary contractions:
\stepcounter{thm}
\begin{equation}\label{newMMP}
X\stackrel{\widehat{\sigma}_1}{\la}\widehat{X}_2\stackrel{\widehat{\zeta}}{\la}Z
\end{equation}
where $\widehat{\sigma}_1$ is the divisorial contraction of $\w{R}_3$, and $\widehat{\zeta}$ is the contraction of the extremal ray  $(\widehat{\sigma}_1)_*(R_1)$. Since $\dim Z<n$, $\widehat{\zeta}$ is of fiber type.

Notice that $E_2\cdot \w{R}_3>0$, and
in $\widehat{X}_2$ we have:
$$\widehat{\sigma}_1(E_2)\cdot\widehat{\sigma}_1(e_1)=\bigl(E_2+(E_2\cdot e_0)E_0\bigr)\cdot e_1=
(E_2\cdot e_0)(E_0\cdot e_1)>0.$$
Therefore \eqref{newMMP} is a special MMP for $-E_2$. 
As before, this MMP contains only one birational map and hence is of type $(b)$,
 contradicting again \eqref{noa}.

Finally, if $E_2\cap E_4\neq\emptyset$ we proceed similarly and we get a special MMP of type $(b)$ for either $-E_0$ or $-E_1$, which is again a contradiction.

Therefore $E_1\cap E_3=\emptyset$ and $E_2\cap E_4=\emptyset$.
\end{parg}
\begin{parg}\label{est}
Since $E_3$ is disjoint from $E_1$, we have $E_2\cap E_3\neq\emptyset$ by  \ref{3rdray} and \ref{typeofray}, and similarly $E_1\cap E_4\neq\emptyset$.
 In particular $E_3\neq E_4$ and $\w{R}_3\neq \w{R}_4$. 

Therefore $\sigma(E_3)$ and $\sigma(E_4)$ are distinct prime divisors in $X_3$, both contained in $\Lo(\ph)$, because $\ph$ contracts both $\sigma(e_3)$ and $\sigma(e_4)$. This implies that $\ph$ and $\psi$ are of fiber type.

Moreover by \ref{3rdray} and \ref{posint} we have $E_1\cdot e_4>0$, $E_4\cdot e_1>0$, $E_2\cdot e_3>0$, and $E_3\cdot e_2>0$.

We show that $E_3\cap E_4\neq \emptyset$. This also implies that $E_3\cdot e_4>0$ and $E_4\cdot e_3>0$ by Rem.~\ref{e} (see fig.~\ref{exc}). 

To prove that $E_3$ and $E_4$ are not disjoint, we need to distinguish the cases of $5$ and $4$ exceptional $\pr^1$-bundles, which will be treated separately in paragraphs \ref{5} and \ref{4} respectively. 
\stepcounter{thm}
\begin{figure}[h]
\begin{center} 
\scalebox{0.4}{\includegraphics{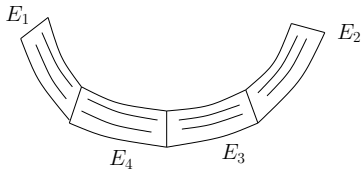}}
\caption{The exceptional divisors $E_1,E_2,E_3,E_4$.}\label{exc}
\end{center}
\end{figure}
\end{parg}
\begin{parg}[Case 
  of $5$ exceptional $\pr^1$-bundles (step \ref{4or5}$(ii)$)]\label{5}
In this case $E_1$ is disjoint from $E_2$, $E_0''$, and $E_3$, and by Rem.~\ref{intersection} we must have $E_3=E_0''$. Similarly, we conclude that $E_4=E_0'$.
Therefore 
 $E_3\cap E_4$ is non-empty. 
\end{parg}
\begin{parg}[Case 
  of $4$ exceptional $\pr^1$-bundles (step \ref{4or5}$(i)$)]\label{4}
 Consider the curves $e_1,e_2,e_3,e_4$, which are all contracted by $\psi$. Since $\dim\ker\psi_*=3$, we have a non-trivial relation of numerical equivalence among these curves. 
Moreover, since $R_1+\w{R}_2$, $R_1+\w{R}_3$, and $\w{R}_2+\w{R}_4$ are faces of $\NE(\psi)$ (see fig.~\ref{cone}), we can assume that the relation is:
$$a_1e_1+a_4e_4\equiv a_2e_2+a_3e_3,$$
with $a_i\in\R_{>0}$ for $i=1,2,3,4$. 

Set $\gamma:=a_1[e_1]+a_4[e_4]\in\N(X)$.
Notice that $E_0'\neq E_4$, because $E_0'$ intersects $E_2$ while $E_2\cap E_4=\emptyset$, thus $E_0'\cdot e_4\geq 0$ and hence 
$$E_0'\cdot\gamma=a_1 E_0'\cdot e_1+a_4E_0'\cdot e_4>0.$$

If by contradiction $E_3\cap E_4=\emptyset$, then $(E_1\cup E_4)\cap (E_2\cup E_3)=\emptyset$. By Rem.~\ref{inter2}, we get:
$$\N(E_1,X)=E_2^{\perp}\cap E_3^{\perp}=\N(E_4,X)\text{ and }\N(E_2,X)=E_1^{\perp}\cap E_4^{\perp}=\N(E_3,X).$$ 

Therefore $\gamma=a_1[e_1]+a_4[e_4]=a_2[e_2]+a_3[e_3]$ belongs to $\N(E_1,X)\cap\N(E_2,X)$.
On the other hand by Lemma \ref{directsum} we know that
$$\N(E_1,X)\cap\N(E_2,X)=\N(E_0\cap E_1)\cap\N(E_0\cap E_2,X)\subset (E_0')^{\perp},$$
because $E_0\cap E_0'=\emptyset$. This is impossible, as $E_0'\cdot\gamma\neq 0$.
\end{parg}
\begin{parg}\label{Ai}
Set $A_i:=\sigma(E_i)\subset X_3$ for $i=1,2$, so that $A_i$ is smooth, irreducible, of dimension $n-2$, $A_1\cap A_2=\emptyset$, and $\sigma\colon X\to X_3$ is the blow-up of $A_1\cup A_2$ (see fig.~\ref{X3}).
We show that $\ph$ is finite on $A_i$ and $\ph(A_i)=Y$, for $i=1,2$; in particular $\dim Y=n-2$.

In $X_3$ we have: 
$$\sigma(E_3)\cdot \sigma(e_4)=\bigl(E_3+(E_3\cdot e_2)E_2\bigr)\cdot e_4= E_3\cdot e_4>0$$
(see \ref{est}).
As $[\sigma(e_4)]\in\NE(\ph)$, we deduce that
$$
\sigma(E_3)\cdot \NE(\ph)>0.$$
Since $\ph$ is of fiber type, this means
 that  
$\sigma(E_3)$ dominates $Y$. On the other hand $\ph$ contracts $\sigma(e_3)$, therefore $\dim Y\leq n-2$.

We have $A_1\cap\sigma(E_3)=\emptyset$, hence  $\sigma(E_3)\cdot C=0$
for every curve  $C\subset  A_1$. This implies that  $[C]\not\in\NE(\ph)$, and $\ph$ is finite on $A_1$. Thus $\ph(A_1)=Y$ and $\dim Y=n-2$.

In a similar way, we see that $\sigma(E_4)\cdot \NE(\ph)>0$,
$\ph$ is finite on $A_2$, and $\ph(A_2)=Y$.
\stepcounter{thm}
\begin{figure}[h]
\begin{center} 
\scalebox{0.4}{\includegraphics{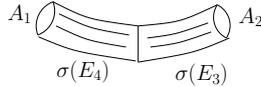}}
\caption{The images of the exceptional divisors in $X_3$.}\label{X3}
\end{center}
\end{figure}
\end{parg}
\begin{parg}
 In particular \ref{Ai} implies that $\ph$ and $\psi$ are equidimensional  fibrations in Del Pezzo surfaces, and that $\psi$ is quasi-elementary. Since $X_3$ is smooth and $\ph$ is $K$-negative, then $Y$ is factorial, has canonical singularities, and $\codim\Sing(Y)\geq 3$, by Lemma \ref{quasiel} below. Thus we have $(ii)$, and
this concludes the proof of Th.~\ref{intermediate}.
\end{parg}
\end{proof}
\begin{lemma}\label{quasiel}
Let $X$ be a smooth projective variety and $f\colon X\to Y$ a $K$-negative, quasi-elementary contraction of fiber type. Then $Y$ is factorial, has canonical singularities, and $\codim\Sing(Y)\geq 3$.
\end{lemma}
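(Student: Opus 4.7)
The plan is to derive the three asserted properties of $Y$ in sequence, exploiting that $-K_X$ is $f$-ample (by the $K$-negative hypothesis) and that $f_*\mathcal{O}_X=\mathcal{O}_Y$ (since $f$ is a contraction).

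First, I would establish rational singularities (and hence Cohen--Macaulayness). Applying relative Kawamata--Viehweg vanishing to the $f$-ample divisor $-K_X$ yields $R^if_*\mathcal{O}_X=0$ for every $i\geq 1$; combined with $f_*\mathcal{O}_X=\mathcal{O}_Y$, this proves $Y$ has rational singularities by the standard criterion (see \cite[Theorem 5.10]{kollarmori}), and in particular $Y$ is Cohen--Macaulay.

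Next, for factoriality, let $D$ be a prime Weil divisor on $Y$ and set $Y^0:=Y\smallsetminus\Sing(Y)$. Then $D|_{Y^0}$ is Cartier, and its pullback via $f$ extends uniquely to a Cartier divisor $\w{D}$ on the smooth variety $X$. For every irreducible curve $C$ contracted by $f$, quasi-elementarity provides a one-cycle $C'$ in a general fiber $F\subset f^{-1}(Y^0)$ with $[C]=[C']$, hence $\w{D}\cdot C=\w{D}\cdot C'=0$. Since $-K_X$ is $f$-ample, every general fiber $F$ is a Fano variety, $\operatorname{Pic}(F)$ is discrete, and the $f$-numerical triviality of $\w{D}|_F$ promotes to $\w{D}|_F\cong\mathcal{O}_F$. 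Cohomology and base change then produce a line bundle $L:=f_*\mathcal{O}_X(\w{D})$ on $Y$ with $f^*L\cong\mathcal{O}_X(\w{D})$; since $L$ and $\mathcal{O}_Y(D)$ agree on $Y^0$ and $Y$ is normal, I conclude $L\cong\mathcal{O}_Y(D)$, so $D$ is Cartier. Canonicity then follows from Elkik's theorem, because $Y$ is Gorenstein (factorial with $K_Y$ a Weil divisor) and has rational singularities.

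Finally, for the codimension bound, suppose for contradiction that an irreducible component $\Lambda\subset\Sing(Y)$ has codimension $2$. A general linear section through a generic point $y\in\Lambda$ produces a Gorenstein, factorial, canonical surface germ at $y$, which by the classification of rational double points is forced to be of type $E_8$ (the only factorial RDP). The main obstacle--indeed the heart of the argument--is to exclude this $E_8$ case: I would exploit the smoothness of $X$ together with a local model for $f$ near $f^{-1}(y)$, via a simultaneous resolution of $(X,f^{-1}(\Lambda))$ and $(Y,\Lambda)$, and use the discrepancy comparison $K_X=f^*K_Y+K_{X/Y}$ with $K_{X/Y}$ being $f$-anti-ample to derive a contradiction with the existence of a crepant resolution of an $E_8$ germ on the base. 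This yields $\codim\Sing(Y)\geq 3$.
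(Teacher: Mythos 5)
Your treatment of factoriality and canonical singularities is essentially sound: the descent of the Cartier divisor $\w{D}$ along the $K$-negative face $\NE(f)$ (using quasi-elementarity to reduce intersection numbers to general fibers, which are disjoint from $\w{D}$), the vanishing $R^if_*\mathcal{O}_X=0$, and the implication ``Gorenstein with rational singularities $\Rightarrow$ canonical'' all work. Note, though, that the paper does not prove these two properties at all --- it quotes them directly from \cite[Lemma 3.10(i)]{fanos} --- so the only part of the lemma that the paper argues in detail is the bound $\codim\Sing(Y)\geq 3$, and that is precisely where your proposal has a genuine gap.

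Two problems with your codimension argument. First, factoriality of $Y$ is not inherited by a general surface section through a point of $\Lambda$ (nor by the transverse slice): for instance the rank-$5$ quadric cone is factorial while its general hyperplane section, the rank-$4$ cone, is not. So you cannot conclude that the surface germ is an $E_8$ point; a priori it is only Du Val. Second, and more seriously, the step you yourself identify as the heart of the argument is not carried out, and the mechanism you sketch cannot produce a contradiction as stated: $f$ is of fiber type, not birational, so $X$ does not provide a (simultaneous) resolution of $(Y,\Lambda)$; and an $E_8$ germ \emph{does} admit a crepant resolution, namely its minimal resolution, so ``contradiction with the existence of a crepant resolution of an $E_8$ germ'' is not a contradiction. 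The actual content needed is that a $K$-negative, equidimensional contraction from a smooth variety onto a normal \emph{surface} has smooth target. The paper obtains this by cutting $Y$ with $\dim Y-2$ general very ample divisors (which avoid the codimension-$\geq 3$ locus where $f$ fails to be equidimensional, by \cite[Lemma 3.9(ii)]{fanos}, and give a smooth $X_S$ by Bertini, with $\ph_S$ still $K$-negative by adjunction), and then invoking \cite[Lemma 3.10(ii)]{fanos} for the surface base. Your proof would need to supply an argument for that surface statement; as written it does not.
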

\begin{proof}
We know that $Y$ is factorial and  has canonical singularities by \cite[Lemma 3.10(i)]{fanos}. Let us show that $\codim\Sing(Y)\geq 3$; this
 follows with standard techniques from the case where $\dim Y=2$.

Set $m:=\dim Y$. If $m=1$, then $Y$ is smooth and there is nothing to prove. Suppose that $m\geq 2$, and let $H_1,\dotsc,H_{m-2}$ be general elements of a very ample linear system on $Y$. Then $S:=H_1\cap\cdots\cap H_{m-2}$ is an irreducible, normal surface, and $X_S:=\ph^{-1}(S)$ is smooth by Bertini. Moreover $\ph_S:=\ph_{|X_S}\colon X_S\to S$ is a $K$-negative contraction.

The locus of $Y$ where $\ph$ is not equidimensional has codimension at least $3$ \cite[Lemma 3.9(ii)]{fanos}, so that $\ph_S$ is equidimensional. Therefore $S$ is smooth by \cite[Lemma 3.10(ii)]{fanos}. Thus $S\cap\Sing(Y)=\emptyset$, hence $\codim\Sing(Y)\geq 3$.
\end{proof}
\section{The conic bundle case}\label{cb}
In this section we 
first describe, in Prop.~\ref{conicbundle}, the case where there exists a special MMP of type $(b)$ for $-D$, $D$ a prime divisor in $X$ with $c(D)=2$.
Then we use Th.~\ref{intermediate} and Prop.~\ref{conicbundle} to prove 
 Th.~\ref{main2} (which is a more detailed version of our main result, Th.~\ref{main}).
\begin{proposition}\label{conicbundle}
Let $X$ be a smooth Fano variety with $c_X=2$, $D\subset X$ a prime divisor with $c(D)=2$, and suppose that there is a special MMP of type $(b)$ for $-D$. We keep the same notation as in \ref{basic} and \ref{typeb}. 
\stepcounter{thm}
\begin{equation}\label{dMMP}
\xymatrix{
{X=X_1}\ar@/^1pc/@{-->}[rrrr]^{\sigma}\ar@/_1pc/@{-->}[drrrr]_{\psi}
\ar@{-->}[r]_{\,\quad \sigma_1}& 
{X_2}\ar@{-->}[r]&
{\cdots}\ar@{-->}[r] &
{X_{k-1}}\ar@{-->}[r]_{\sigma_{k-1}}& 
{X_k}\ar[d]^{\ph}\\
&&&&Y 
}\end{equation}
Then  $X_k$ and $Y$ are smooth, $\ph\colon X_k\to Y$ is a conic bundle,
$\sigma_{j}$ is a flip for every $j\neq i_1$, and $\rho_X-\rho_Y=2$.

Moreover, we have one of
the following:
\begin{enumerate}[$(i)$]
\item  $\ph$ is  smooth;
\item  $k=2$, $\sigma=\sigma_1\colon X\to X_2$ is the blow-up of a smooth subvariety of codimension $2$, $\psi\colon X\to Y$ is a conic bundle, and there exists a smooth $\pr^1$-fibration $\zeta\colon Y\to Y'$, $Y'$ a smooth projective variety of dimension $n-2$.
$$\xymatrix{
 X\ar[d]_{\psi'}\ar[dr]^{\psi}\ar[r]^{\sigma} & {X_{2}}\ar[d]^{\ph} \\
{Y'} & Y\ar[l]^{\zeta}
}$$
 Moreover  $\psi':=\zeta\circ\psi\colon X\to Y'$ is an  equidimensional,
 quasi-elementary fibration in Del Pezzo surfaces, and $\rho_X-\rho_{Y'}=3$.
\end{enumerate}
\end{proposition}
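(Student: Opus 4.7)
The proof falls naturally into three stages.

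First, Lemma~\ref{min} yields immediately that $X_k$ and $Y$ are smooth and $\phi$ is a conic bundle. For the claim that $\sigma_j$ is a flip for every $j\neq i_1$, I would use the fact (following from the analysis of \cite[Lemmas~2.7 and 2.8]{codim}) that in a type~(b) MMP the only divisor of $X$ contracted by $\sigma$ is $E$; equivalently, the codimension-$2$ components of the indeterminacy locus $T\subset X_k$ of $\sigma^{-1}$ correspond to divisorial steps, and $A=\sigma(E)$ is the unique such component — the flipping contributions having codimension $\geq 3$ by Lemma~\ref{indet}. Counting, $\rho_X-\rho_{X_k}=1$ and $\rho_{X_k}-\rho_Y=1$, whence $\rho_X-\rho_Y=2$.

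Next, the dichotomy. If $\phi$ is smooth we are in case~(i); otherwise I aim to prove~(ii). The first goal is to show $k=2$. A degenerate fiber $F=F_1\cup F_2$ of $\phi$ lying over some $y\in Y\smallsetminus\phi(T)$ sits in the open subset of $X_k$ where $\sigma^{-1}$ is an isomorphism, hence corresponds to a degenerate fiber of $\psi$ in $X$, disjoint from $E\cup\w E$. If by contradiction $k>2$, there is a flipping extremal ray $R_j\subset\N(D_j,X_j)$ with $D_j\cdot R_j>0$ and $-K_{X_j}\cdot R_j>0$. I would trace the classes $[F_1],[F_2]$ through the MMP using Lemma~\ref{flips}, combine with the splitting $\N(X)=\N(D,X)\oplus\R[e]\oplus\R[\w e]$ of Lemma~\ref{directsum0} and the relation $\ell\equiv e+\w e$, and derive a contradiction with the extremality or $K$-negativity of $R_j$. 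This is the main obstacle of the proof and will probably require a careful case analysis according to where in the MMP sequence the flip occurs.

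Once $k=2$ is established, $\sigma=\sigma_1$ is the blow-up of the smooth codimension-$2$ subvariety $A$ contained in the smooth locus of $X_2=X_k$ (from \ref{typeb}), so $\psi=\phi\circ\sigma$ is a morphism and a conic bundle. To construct $\zeta\colon Y\to Y'$, I would identify a $K_Y$-negative extremal ray of $\NE(Y)$ whose contraction is a smooth $\pr^1$-fibration: the $\pr^1$-bundles $E,\w E\to Z$ together with the relation $\ell\equiv e+\w e$ endow $Y$ near $Z$ with the structure of a $\pr^1$-bundle admitting $Z$ as a section, and the goal is to show that this local structure globalizes to a morphism $\zeta$. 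Smoothness of $\zeta$ and of $Y'$ would then follow by checking that every fiber of $\zeta$ is $\pr^1$. Finally, $\psi'=\zeta\circ\psi\colon X\to Y'$ has general fibers that are smooth conic bundles over $\pr^1$ with $-K_X$ ample on them, hence Del Pezzo surfaces; equidimensionality and quasi-elementarity of $\psi'$ follow from those of $\psi$ and $\zeta$, and $\rho_X-\rho_{Y'}=(\rho_X-\rho_Y)+1=3$.
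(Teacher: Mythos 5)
Your proposal correctly identifies the easy part (Lemma~\ref{min} gives smoothness of $X_k$, $Y$ and the conic bundle structure), but the remaining three claims all have genuine gaps. First, the assertion that ``in a type~$(b)$ MMP the only divisor contracted by $\sigma$ is $E$'' is not among the properties recalled in \ref{typeb} from \cite[Lemmas 2.7 and 2.8]{codim}: those lemmas only control the special step $i_1$, and say nothing about whether the steps with $R_i\subset\N(D_i,X_i)$ are flips. Your justification is circular: you argue that $A=\sigma(E)$ is the unique codimension-$2$ component of $T$, but another divisorial step would produce exactly such an additional component, so uniqueness is what has to be proved. The paper's argument is numerical and uses $c_X=2$ essentially: if $G\subset X$ is the transform of another exceptional divisor, then $G\cap(E\cup\wi{E})=\emptyset$, so $\N(G,X)=E^{\perp}\cap\wi{E}^{\perp}$ by Rem.~\ref{inter2}, hence $[\ell]\in\N(G,X)$; pushing forward with Lemma~\ref{flips} (where $G_k=0$) gives $[\ell]\in\N(D_k,X_k)$, contradicting $\ker\ph_*\not\subset\N(D_k,X_k)$.

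Second, for $k=2$ your plan of ``tracing $[F_1],[F_2]$ through the MMP'' for a degenerate fiber $F_1\cup F_2$ is not an argument: such a fiber is disjoint from $E\cup\wi{E}$ and (if general) from all flipping loci, so its components pass through the MMP untouched and produce no tension with a flip. The paper's mechanism is entirely different and hinges on an auxiliary divisor your proposal never introduces: one first shows $c(E)=2$ (using that the discriminant divisor $\Delta$ is disjoint from $E\cup\wi{E}$), runs a special MMP for $-E$ to obtain an exceptional $\pr^1$-bundle $F$ with $E\cdot f>0$, $\Delta_0\cdot f>0$ and $F\cdot\ell>0$, proves that every flip of the original MMP is disjoint from $F$, and then derives a contradiction from a component of $F_k\cap\ph^{-1}(\ph(C))$ for $C$ a flipped curve, forcing its class into $\N(D_k,X_k)$ and then into the flipping ray. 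Third, your construction of $\zeta$ is based on a misreading of the geometry: $\sigma$ contracts each fiber $e\subset E$ to a point of $A$, so $\psi(e)$ is a point and the rulings of $E,\wi{E}$ induce nothing on $Y$; the divisor $Z=\psi(E)$ is a multisection, not a section, of $\zeta$. The smooth $\pr^1$-fibration on $Y$ is produced by the curves $\psi(f)$ coming from the same divisor $F$, via the computation $\dim\N(Z,Y)=\rho_Y-1$ and $\psi^*(Z)\cdot f>0$ together with \cite[Lemma 3.2.25]{codim}. Without $F$, neither the no-flip claim nor the fibration $\zeta$ can be obtained.
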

\begin{proof}
Let $\ell\subset X$ be a general fiber of the rational conic bundle $\psi\colon X\dasharrow Y$; recall that $\ell\cap (E\cup\wi{E})=\emptyset$ (see \ref{typeb}).
Lemma \ref{min} shows that $X_k$ and $Y$ are smooth, and $\ph$ is a conic bundle.
 \begin{parg}\label{a}
We show that $\sigma_{j}$ is a flip for every $j\neq i_1$.
 By contradiction, let  $\sigma_{j}$ be the first divisorial contraction (different from $\sigma_{i_1}$) occurring in the special MMP. 

Let $G\subset X$  be the transform of $\Exc(\sigma_j)$. 
We have $G\cap(E\cup\wi{E})=\emptyset$ (see \ref{typeb}), so that $\N(G,X)= E^{\perp}\cap\wi{E}^{\perp}$ by Rem.~\ref{inter2}. 
Since $E\cdot\ell=\wi{E}\cdot\ell=0$, this implies that $[\ell]\in\N(G,X)$.
Using Lemma \ref{flips}, we deduce that
  $[\ell]\in\N(D_k,X_k)$, a contradiction because $[\ell]\in\ker\ph_*$ and $\ker\ph_*\not\subset\N(D_k,X_k)$ (see \ref{typeb}).
\end{parg}
\begin{parg}
  To show the last part of the statement, 
\emph{we assume for the rest of the proof 
 that the conic bundle $\ph\colon X_k\to Y$ is not smooth,} and we prove $(ii)$. 
Let us sketch our strategy, which is similar to the one used in \cite[\S 3.3]{codim} to study the case $c_X=3$.

The assumption that $\ph$ is not smooth yields an effective divisor $\Delta\subset X$ disjoint from $E\cup \wi{E}$ (given by singular fibers of $\ph$, see \ref{c}). This implies that $c(E)=2$ (paragraph \ref{cE}), and by looking at a special MMP for $-E$, we find an exceptional $\pr^1$-bundle $F\subset X$ which dominates $Y$, as in fig.~\ref{delta} (paragraphs \ref{F} and \ref{positive}). We use $F$ to prove that the MMP \eqref{dMMP} has no flips, in paragraphs \ref{flipdisj}--\ref{sabrina}, so that $\psi\colon X\to Y$ is regular and is a conic bundle. Finally, we show that the $\pr^1$-bundle on $F$ induces a smooth $\pr^1$-fibration on $Y$ (paragraph \ref{unsplit}).
\end{parg}
\begin{parg}\label{c}
Since $\ph$ is a conic bundle and is not smooth, the union
$\Delta\subset X_k$ of its singular fibers is a non-empty reduced effective divisor in $X_k$. 
Moreover, every fiber of $\ph$ over $\ph(T)$ is integral (recall that $T\subset X_k$ is the indeterminacy locus of $\sigma^{-1}\colon X_k\dasharrow X$, see \ref{typeb}), hence $\ph$ is smooth over $\ph(T)$, namely: $$\Delta\cap\ph^{-1}(\ph(T))=\emptyset.$$

This shows that $\Delta$ is contained in the open subset where
$\sigma^{-1}\colon X_k\dasharrow X$ is an isomorphism, and that
 in $X$ we have:
\stepcounter{thm}
\begin{equation}\label{cabofrio}
\Delta\cap (E\cup\widehat{E})=\emptyset,\end{equation}
where we still denote by $\Delta$ the transform of $\Delta$ in $X$. As $\Delta$ may be reducible, we fix an irreducible component $\Delta_0\subset X$; notice that $\Delta_0$ is the inverse image under $\psi$ of an irreducible component of the discriminant locus of $\ph$ in $Y$. 
\end{parg}
\begin{parg}\label{sim}
We show that
$\N(E,X)\cap\N(\Delta_0,X)\subseteq\N(D,X)$.

Let $\gamma\in \N(E,X)\cap\N(\Delta_0,X)$. Since $\gamma\in\N(E,X)$ and $D\cdot e>0$, by Rem.~\ref{elem2}(1) we have
\stepcounter{thm}
\begin{equation}\label{first}
\gamma=\lambda [e]+\eta,\end{equation}
with $\lambda\in\R$ and $\eta\in\N(D\cap E, X)\subseteq\N(D,X)$.

On the other hand, since $D\cdot\ell>0$, we also have 
$$\N(\Delta_0,X)=\R[\ell]+\N(D\cap\Delta_0,X)$$
(see \cite[Prop.~II.4.19]{kollar}).
Thus $\gamma\in\N(\Delta_0,X)$ yields
\stepcounter{thm}
\begin{equation}\label{second}
\gamma=\lambda'[\ell]+\eta'=\lambda' [e]+\lambda' [\wi{e}\,]  +\eta',
\end{equation}
where $\lambda'\in\R$ and $\eta'\in\N(D\cap \Delta_0,X)\subseteq\N(D,X)$ (recall that $\ell\equiv e+\wi{e}$, see \ref{typeb}).

By Lemma \ref{directsum0}, comparing \eqref{first} and \eqref{second}, we deduce that $\lambda=\lambda'=0$ and $\gamma=\eta\in\N(D,X)$.
\end{parg}
\begin{parg}\label{cE}
We show that $c(E)=2$.

Indeed by \eqref{cabofrio} we have 
$$\N(E,X)\cup\N(\wi{E},X)\subseteq\Delta^{\perp}.
$$
The subspaces $\N(E,X)$ and $\N(\wi{E},X)$ have dimension at least $\rho_X-2$, because $c_X=2$. On the other hand  $\N(E,X)$
 cannot contain $\N(\wi{E},X)$, because
$[\wi{e}\,]\not\in\N(E,X)$ (see \ref{typeb}). This gives the statement.
\end{parg}
\begin{parg}\label{F}
There exists an exceptional $\pr^1$-bundle $F\subset X$ such that $E\cdot f>0$, $F\neq E$, and $F\neq\wi{E}$. In particular, $F\cap E\neq\emptyset$.

Indeed, as $c(E)=2$,   we can consider a special MMP for $-E$. By \ref{typea} and \ref{typeb}, either we get $F$ as above, or 
the only possibility is that the special MMP is of type $(b)$, 
and there is an exceptional $\pr^1$-bundle 
 $\w{E}\subset X$ such that $\w{E}\cdot e=0$ and $\w{E}\cdot\wi{e}=1$. Then we get $\w{E}\cdot \ell=\w{E}\cdot(e+\wi{e}\,)=1$, a contradiction because in this case every fiber of $\ph$ should be smooth.
\end{parg}
\begin{parg}\label{positive}
We show that: $\Delta_0\cdot f>0$, $F\cdot e>0$, and $F\cdot \ell>0$.
\stepcounter{thm}
\begin{figure}[h]
\begin{center} 
\scalebox{0.3}{\includegraphics{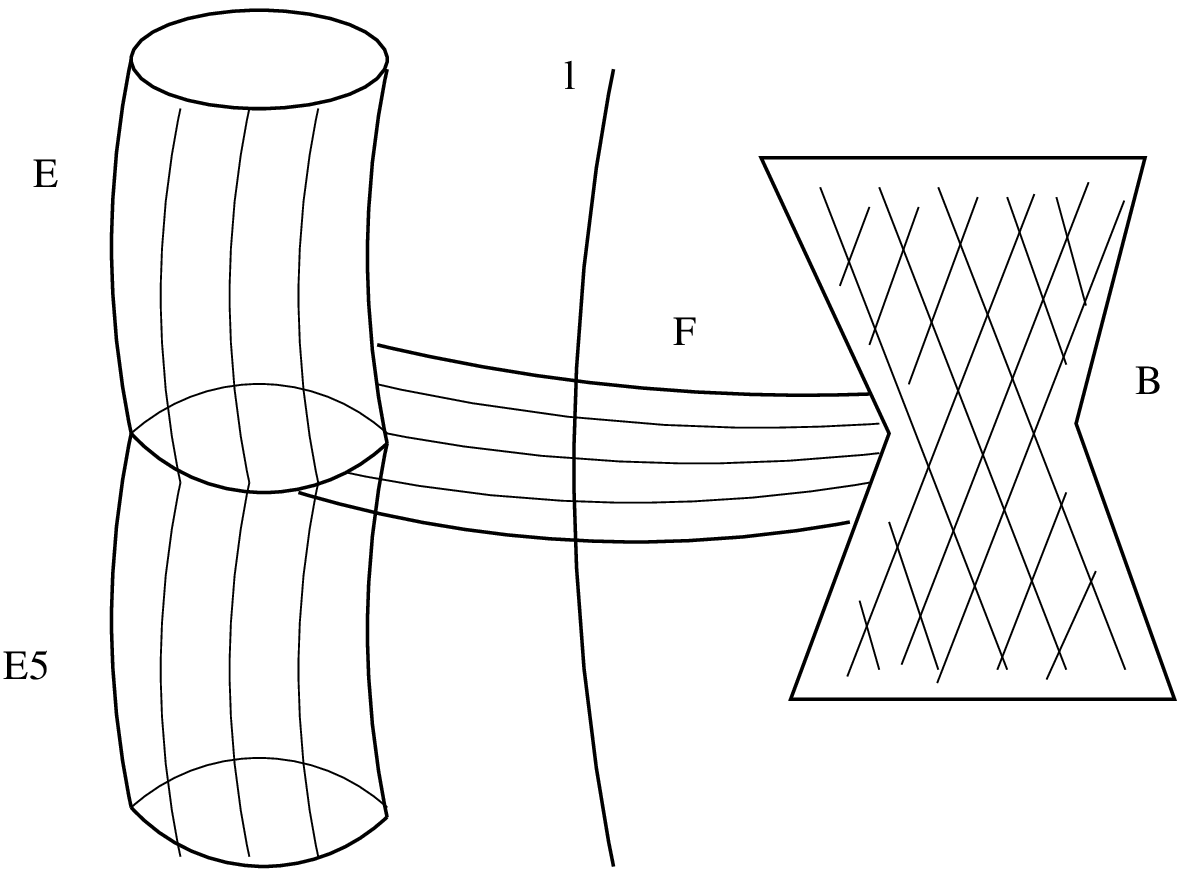}}
\caption{}
\label{delta}
\end{center}
\end{figure}

Indeed $\Delta_0$ is disjoint from $E$ and $\wi{E}$, so that $\Delta_0\cap F\neq\emptyset$ by Rem.~\ref{intersection}. Then $\Delta_0\cdot f>0$ by Rem.~\ref{e}, as
 $E\cdot f>0$ and  $E\cap\Delta_0=\emptyset$. 

We have $F\cdot e\geq 0$ because $F\neq E$. By contradiction, if $F\cdot e=0$, then there exists some curve ${e}$ contained in $F$, and since $\Delta_0\cdot f>0$, by Rem.~\ref{elem2}(3)  we have
$${e}\equiv \lambda f+\mu C$$
where $C$ is a curve contained in $\Delta_0\cap F$ and $\lambda,\mu\in\R$. Intersecting with $E$ we get
$-1=E\cdot {e}=\lambda E\cdot f$, so that $\lambda<0$ (recall that $E\cdot f>0$ by \ref{F}). 
Since $F\neq\wi{E}$, we have $\wi{E}\cdot f\geq 0$, and
intersecting with $\wi{E}$ we get $1=\wi{E}\cdot {e}=\lambda \wi{E}\cdot f\leq 0$ (see \ref{typeb}), a contradiction. Thus $F\cdot e>0$.

Finally $F\cdot\wi{e}\geq 0$ because $F\neq\wi{E}$, hence $F\cdot\ell=F\cdot (e+\wi{e}\,)>0$.
\end{parg}
\begin{parg}\label{Fagain}
If $\Gamma\subset F$ is an irreducible curve such that $\Gamma\cap(E\cup\Delta_0)=\emptyset$, then $\Gamma\equiv\mu_1\Gamma_1\equiv\mu_2\Gamma_2$, where $\mu_1,\mu_2\in\R$, $\Gamma_1$ is a curve contained in $E\cap F$, and $\Gamma_2$ is a curve contained in $\Delta_0\cap F$.

Indeed as $E\cdot f>0$, by Rem.~\ref{elem2}(3) we have
$$\Gamma\equiv\lambda f+\mu_1 \Gamma_1,$$
where $\lambda,\mu_1\in\R$ and $\Gamma_1$ is a curve contained in $E\cap F$. Intersecting with $\Delta_0$ we get $\lambda\Delta_0\cdot f=0$ and hence
$\lambda=0$, because $\Delta_0\cdot f>0$ by \ref{positive}.

Exchanging the roles of $E$ and $\Delta_0$, we get also the second part of the statement above.
\end{parg}
\begin{parg}\label{flipdisj}
Recall that by \ref{a}, $\sigma_j$ is a flip for every $j\neq i_1$.

We show that the locus of every flip of the MMP is disjoint from $F$. The proof will take paragraphs \ref{flipdisj}, \ref{notfinite}, and \ref{finite}.

By contradiction, let $j_0\in\{1,\dotsc,k-1\}\smallsetminus\{i_1\}$ be the first index such that $\Lo(R_{j_0})\cap F_{j_0}\neq\emptyset$, where $F_{j_0}$ is the transform of $F$ in $X_{j_0}$.
Set
$$\gamma:=\sigma_{j_0-1}\circ\cdots\circ\sigma_1\colon X\dasharrow X_{j_0},$$
and notice that $\gamma$ is regular around both $E$ (see \ref{typeb}) and $F$ (by our choice of $j_0$). Moreover, $\gamma(E)\cap\Lo(R_{j_0})=\emptyset$ (again \ref{typeb}).
 \end{parg}
\begin{parg}\label{notfinite}
Suppose  that the contraction of  $R_{j_0}$  is not finite on $F_{j_0}$, let $C\subset F_{j_0}$ be an irreducible curve with class in 
$R_{j_0}$,
 and let $\w{C}\subset F$ be its transform in $X$. Then $\w{C}$ is disjoint from $E$ and $\Delta_0$, because these divisors are disjoint from the loci of all flips of the MMP.

By \ref{Fagain}, we have 
 $\w{C}\equiv\mu C'$, where $\mu\in\R$ and $C'$ is a curve contained in $E\cap F$.
Both $\w{C}$ and $C'$ are contained in $F$, on which the birational map $\gamma\colon X\dasharrow X_{j_0}$ is regular; set $C'':=\gamma(C')\subset X_{j_0}$. 

Since $C=\gamma(\w{C})$, in $X_{j_0}$ we get $C\equiv \mu' C''$ with $\mu'\in\R$, hence $[C'']\in R_{j_0}$. But this is impossible, because $C''\subset\gamma(E)$ and $\gamma(E)\cap\Lo(R_{j_0})=\emptyset$.
\end{parg}
\begin{parg}\label{finite}
Suppose now that the contraction of  $R_{j_0}$ is finite on $F_{j_0}$. Since $\Lo(R_{j_0})\cap F_{j_0}\neq\emptyset$,
we have  $F_{j_0}\cdot R_{j_0}>0$, and every non-trivial fiber of the contraction of $R_{j_0}$ must have dimension one. Then Prop.~\ref{small}$(iv)$ implies that $\Lo(R_{j_0})$ is contained in the indeterminacy locus $W$
of $\gamma^{-1}$.

If $i_1>j_0$, then $\gamma$ is a composition of flips, and by our choice of $j_0$ we have $F_{j_0}\cap W=\emptyset$, a contradiction.

If instead $i_1<j_0$, then $W=\gamma(E)\cup W'$, where $W'$ is disjoint from $\gamma(E)$ and is given by the union of the loci of the flips preceding $\sigma_{j_0}$. We have $\Lo(R_{j_0})\subseteq W'$, because $\Lo(R_{j_0})\cap \gamma(E)=\emptyset$.
As before this gives a contradiction, because  by our choice of $j_0$ we have $F_{j_0}\cap W'=\emptyset$.

Therefore every flip of the MMP is disjoint from $F$.
\end{parg}
\begin{parg}\label{sabrina}
We show that $k=2$, so that $i_1=1$ and the MMP \eqref{dMMP} has just one step. By \ref{a}, we have to exclude the presence of flips.

By contradiction, let $\sigma_m\colon X_m\dasharrow X_{m+1}$ be the last flip 
of the MMP, so that either $m=k-1$ and $i_1<k-1$, or $m=k-2$ and $i_1=k-1$.
Notice that we have an induced conic bundle $\ph'\colon X_{m+1}\to Y$, where $\ph'=\ph$ if $m=k-1$, and $\ph'=\ph\circ\sigma_{k-1}$ if $m=k-2$ and $i_1=k-1$. Notice also that in the latter case, the indeterminacy locus of $\sigma_{k-2}^{-1}$ in $X_{k-1}$ is disjoint from $(\ph')^{-1}(Z)$ (recall that $Z=\psi(E)=\psi(\wi{E})\subset Y$).

For simplicity we assume $m=k-1$, the other case being similar.
 
Let $C\subset X_{k}$ be an irreducible curve with class in the small ray $R_{k-1}'$ corresponding to the flip $\sigma_{k-1}\colon X_{k-1}\dasharrow X_k$; notice that $C\subset D_k$, because $D_k\cdot R_{k-1}'<0$.

Set $S:=\ph^{-1}(\ph(C))$. Since $C\subset T$ and $\ph$ is smooth over $\ph(T)$ (see \ref{c}), we have 
\stepcounter{thm}
\begin{equation}
\label{ultima0}
S\cap\left(\ph^{-1}(Z)\cup \Delta\right)=\emptyset.\end{equation}

Set $F_k:=\sigma(F)\subset X_k$.  
The surface $S$ intersects $F_{k}$, because $F_{k}\cdot\ell>0$. On the other hand $F_{k}\cap C=\emptyset$ by \ref{flipdisj}, so that $\dim(S\cap F_{k})=1$. 

Let $C_2\subset S\cap F_{k}$ be an irreducible component of $S\cap F_{k}$.
Since $S$ is ruled, we have 
\stepcounter{thm}
\begin{equation}
\label{ultima}
C_2\equiv a \ell+b C\qquad\text{ with $a,b\in\R$.}
\end{equation}

Let us consider the transform $\w{C}_2$ of $C_2$ in $X$; notice that by \ref{flipdisj}, $\w{C}_2$ is contained in the open subset where $\sigma\colon X\dasharrow X_k$ is an isomorphism. We have
$$\w{C}_2\subset F\quad\text{and}\quad
 \w{C}_2\cap(E\cup \Delta_0)=\emptyset$$ 
(see \eqref{ultima0}). Thus \ref{Fagain} implies that:
  $$[\w{C}_2]\in\N(E,X)\cap\N(\Delta_0,X).$$

By \ref{sim}, we get $[\w{C}_2]\in\N(D,X)$, and finally 
Lemma \ref{flips} yields
 that $[C_2]\in\N(D_{k},X_{k})$.

Consider now \eqref{ultima}. We have $[C],[C_2]\in \N(D_k,X_k)$, therefore
 $a[\ell]\in\N(D_{k},X_{k})$. Since $[\ell]\not\in\N(D_{k},X_{k})$ (see \ref{typeb}), we deduce that $a=0$ and $[C_2]\in R_{k-1}'$. This contradicts \ref{flipdisj}, because $C_2\subset F_k$.

We conclude that there are no flips in the MMP, and $k=2$.
\end{parg}
\begin{parg}\label{unsplit}
Since $k=2$, the map $\psi\colon X\to Y$ is regular and is a conic bundle. 
$$\xymatrix{X\ar[r]^{\sigma=\sigma_1}\ar[dr]_{\psi}&{X_2}\ar[d]^{\ph}\\
&Y
}$$
The divisor $F$ dominates $Y$ because $F\cdot\ell>0$, hence $\dim\psi(F)=n-1=\dim F$. In particular, $\psi(f)\subset Y$ is a curve.

Let us consider the prime divisor $Z=\psi(E)\subset Y$; recall that $\psi^*(Z)=E+\wi{E}$.
Consider also $\psi_*\colon\N(X)\to\N(Y)$. We have $\ker\psi_*=\R[e]\oplus\R[\wi{e}\,]$ and $[\wi{e}\,]\not\in\N(E,X)$ (see \ref{typeb}), so $\dim(\ker\psi_*\cap\N(E,X))=1$, $\dim\N(E,X)=\rho_X-2$ (see \ref{cE}), and 
$$\dim\N(Z,Y)=\dim\N(E,X)-1=\rho_X-3=\rho_Y-1.$$
Finally we have:
$$\psi^*(Z)\cdot f=(E+\wi{E})\cdot f>0$$
(see \ref{F}).
Therefore by \cite[Lemma 3.2.25]{codim} there   exist a smooth projective variety $Y'$, and a smooth $\pr^1$-fibration $\zeta\colon Y\to Y'$, 
whose fibers are the curves $\psi(f)\subset Y$.

Hence $\psi'=\zeta\circ\psi\colon X\to Y'$ is an equidimensional fibration in Del Pezzo surfaces, and $\rho_{Y'}=\rho_X-3$. 

Moreover $Z\subset Y$ dominates $Y'$, and $Z=\ph(A)$, where $A\subset X_2$ is the center of the blow-up $\sigma$.
Thus $A$ dominates $Y'$, and this implies that 
$\psi'$ is quasi-elementary.

This concludes the proof of Prop.~\ref{conicbundle}.
\end{parg}
\end{proof}
\begin{thm}\label{main2}
Let $X$ be a smooth Fano variety with
 $c_X=2$. Then one of the following holds:
\begin{enumerate}[$(i)$]
\item  there exists a diagram
$$\xymatrix{
X\ar@{-->}[r]^{\phi}&{X'}\ar@/^1pc/[rr]^f\ar[r]_{\alpha}& {X''}\ar[r]_{\ph} &Y
}$$
where all varieties are smooth and projective, $\phi$ is a sequence of flips, 
$\ph$  is a smooth $\pr^1$-fibration, $\alpha$ is the blow-up of a smooth, irreducible subvariety $A\subset X''$ of codimension $2$, $f$ is a conic bundle, and $\rho_X-\rho_Y=2$.
Moreover $f^{-1}(\ph(A))\subset X'$ is contained in the open subset where $\phi^{-1}$ is an isomorphism;
\item there is an equidimensional, quasi-elementary fibration in Del Pezzo surfaces $\psi\colon X\to Y$, where $Y$ is factorial, has  canonical singularities, $\codim\Sing(Y)\geq 3$, and $\rho_X-\rho_Y=3$.
\end{enumerate}
\end{thm}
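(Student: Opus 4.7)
The plan is to chain Theorem \ref{intermediate} with Proposition \ref{conicbundle}. First apply Theorem \ref{intermediate}: in case $(ii)$, the Del Pezzo fibration already satisfies Theorem \ref{main2}$(ii)$. Otherwise there is a prime divisor $D\subset X$ with $c(D)=2$ admitting a special MMP of type $(b)$ for $-D$, and Proposition \ref{conicbundle} applies. Its case $(ii)$ yields Theorem \ref{main2}$(ii)$ directly, taking $Y:=Y'$, which is smooth and hence automatically factorial with canonical singularities and $\codim\Sing(Y)\geq 3$. So we may focus on case $(i)$ of Proposition \ref{conicbundle}, where $\varphi\colon X_k\to Y$ is a smooth $\pr^1$-fibration and $X\dasharrow X_k$ factors through flips $\sigma_j$ ($j\neq i_1$) together with a single blow-up $\sigma_{i_1}\colon X_{i_1}\to X_{i_1+1}$ of a smooth codimension-$2$ subvariety.

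To construct the diagram of Theorem \ref{main2}$(i)$, set $X'':=X_k$, let $A:=\sigma(E)\subset X_k$ (smooth of codimension $2$ by \ref{typeb}), and take $\alpha\colon X'\to X''$ to be the blow-up of $A$. Then $\alpha$ has the required form, and $f:=\varphi\circ\alpha\colon X'\to Y$ is a conic bundle: over $Y\setminus\varphi(A)$ the fibers are smooth $\pr^1$'s coming from $\varphi$, while over $\varphi(A)$ each fiber splits as the strict transform of $\varphi^{-1}(y)$ together with the exceptional $\pr^1$ over $A\cap\varphi^{-1}(y)$, forming a reducible conic. The Picard numbers give $\rho_X=\rho_{X'}=\rho_{X_k}+1=\rho_Y+2$, as required.

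The key step is to show that the induced birational map $\phi\colon X\dasharrow X'$ is a sequence of flips. By \ref{typeb} the flips $X_{i_1+1}\dasharrow X_k$ are isomorphisms in a neighborhood of $A_{i_1+1}$, so the exceptional divisors $E\subset X$ (of $\sigma_{i_1}$) and $E'\subset X'$ (of $\alpha$) are both the projectivization $\pr(N_{A/X_k})$ and correspond to the same divisorial valuation on $K(X)$. Hence $\phi$ is small, and Proposition \ref{small}$(i)$ applied to $\phi$ yields a factorization into $K$-negative elementary contractions; smallness rules out divisorial contractions, so $\phi$ is a sequence of flips. The remaining condition, that $f^{-1}(\varphi(A))=E'\cup\wi{E}'$ lies in the isomorphism locus of $\phi^{-1}$, follows from the statement in \ref{typeb} that $E\cup\wi{E}$ lies in the isomorphism locus of $X\dasharrow X_{i_1}$, combined with the identification of the exceptional divisors through $\sigma_{i_1}$ and the lifted post-blow-up flips. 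The main obstacle is precisely this construction of $X'$ and the proof that $\phi$ is a sequence of flips, which rests on identifying $E$ and $E'$ as realizing the same divisorial valuation and on Proposition \ref{small}$(i)$; the remaining verifications are routine book-keeping through the MMP.
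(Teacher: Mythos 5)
Your proof is correct, and its overall architecture is exactly the paper's: Theorem \ref{intermediate} splits off case $(ii)$, Proposition \ref{conicbundle} handles the type $(b)$ situation, its case $(ii)$ lands again in $(ii)$ of the theorem, and the real content is the construction of the diagram when $\ph$ is smooth. Where you genuinely diverge is in how $X'$ is produced. The paper invokes the Mori dream space factorization \cite[Prop.~1.11]{hukeel} to write $\sigma\colon X\dasharrow X_k$ as flips $\phi$ followed by a morphism $\alpha$, and then identifies $\alpha$ \emph{a posteriori}: elementary since $\rho$ drops by one, divisorial since $X_k$ is $\Q$-factorial, with exceptional divisor the transform of $E$ (the only divisor contracted by $\sigma$), hence the blow-up of $A$. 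You instead define $X'$ as the blow-up of $A$ from the outset, use the local description of $\sigma$ near $E$ from \ref{typeb} to see that $E$ and $\Exc(\alpha)$ realize the same divisorial valuation, conclude that $\phi=\alpha^{-1}\circ\sigma$ is small, and apply Proposition \ref{small}$(i)$, whose proof in the case $\rho_X=\rho_{X'}$ produces precisely a sequence of $K$-negative flips. Both routes are sound; yours gets the blow-up structure of $\alpha$ by construction at the price of the smallness argument, while the paper gets the factorization for free and pays by identifying $\alpha$. One remark: the last condition, that $f^{-1}(\ph(A))=E'\cup\wi{E}'$ lies in the isomorphism locus of $\phi^{-1}$, is better justified not through the individual flips (in your construction these form a \emph{new} sequence coming from a MMP for $\phi^*$ of an ample divisor, not lifts of the original $\sigma_j$), but simply by observing that $\phi$ restricts to an isomorphism between a neighborhood of $E\cup\wi{E}$ in $X$ and a neighborhood of $E'\cup\wi{E}'$ in $X'$, since near these loci both $\sigma$ and $\alpha$ are the blow-up of $A$ composed with isomorphisms; this already gives that $\phi^{-1}$ is an isomorphism on the latter neighborhood, however $\phi$ is factored.
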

Notice that Th.~\ref{main} follows from Th.~\ref{main2}.
\begin{proof}
By Th.~\ref{intermediate}, either we have $(ii)$, or there exist a prime divisor $D\subset X$ with $c(D)=2$ and a special MMP of type $(b)$ for $-D$.

In this last case, we apply Prop.~\ref{conicbundle}. We keep the same notation as in the Proposition. We have that $X_k$ and $Y$ are smooth, $\ph\colon X_k\to Y$ is a conic bundle, and $\rho_X-\rho_Y=2$.

If $\ph$ is not smooth, then we are in Prop.~\ref{conicbundle}$(ii)$, which gives  again $(ii)$.

If $\ph$ is smooth, we set $X'':=X_k$.
The birational map $\sigma\colon X\dasharrow X''$ is a composition of flips and a unique divisorial contraction. 
We can factor it as
$$X\stackrel{\phi}\dasharrow X'\stackrel{\alpha}{\longrightarrow} X'',$$
where $\phi\colon X\dasharrow X'$ is a sequence of flips, and $\alpha\colon X'\to X''$ is regular (see \cite[Prop.~1.11]{hukeel}). Then $\alpha$ must be elementary and divisorial, with exceptional divisor the transform of $E$. Thus $\alpha$ is just the blow-up of $A=\sigma(E)\subset X''$, $X'$ is smooth, and no flip in $\phi$ intersects $E\cup\wi{E}$. 
This gives $(i)$.
\end{proof}
\begin{remark}\label{X0}
Let $X$ be a smooth Fano variety with $c_X=2$, and suppose that $X$ satisfies  Th.~\ref{main2}$(i)$. Then
  there exist an open subset $X_0\subseteq X$ such that $\codim(X\smallsetminus X_0)\geq 2$, and a conic bundle $f_0\colon X_0\to Y_0$ where $Y_0$ is smooth and quasi-projective, such that $f_0$ has relative Picard number two, and factors through a 
 smooth $\pr^1$-fibration over $Y_0$.

Indeed, let $L\subset X'$ be the indeterminacy locus of $\phi^{-1}$. By Lemma \ref{indet}, we have $\codim L\geq 3$, thus $\codim f^{-1}(f(L))\geq 2$. We set $X_0:=\phi^{-1}(X'\smallsetminus f^{-1}(f(L)))$, $Y_0:=Y\smallsetminus f(L)$,  and $f_0:=f\circ\phi_{|X_0}$, and we have the properties above.
\end{remark}
\section{Related results and examples}\label{last}
In this last section we consider the case $c_X=1$, and some special issues of Th.~\ref{main}.
\begin{proposition}\label{codimone}
Let $X$ be a smooth Fano variety with $c_X=1$. Then one of the following holds:
\begin{enumerate}[$(i)$]
\item there are 
an exceptional $\pr^1$-bundle $E\subset X$ and a sequence of flips $\phi\colon X\dasharrow X'$, such that $E$ is contained in the open subset where $\phi$ is an isomorphism, and the transform $E'\subset X'$  is the locus of an extremal ray of type $(n-1,n-2)^{sm}$;
\item there are a contracting birational map $\sigma\colon X\dasharrow X'$, and a conic bundle $\ph\colon X'\to Y$,
 such that $X'$ and $Y$ are smooth and projective, and $\rho_Y=\rho_{X'}-1$.
\end{enumerate}

If moreover $X$ contains two disjoint prime divisors $D$ and $D'$, then  we can replace $(ii)$ by:
\begin{enumerate}
\item[$(ii)'$]
there are a sequence of flips $\sigma\colon X\dasharrow X'$, and a conic bundle $\ph\colon X'\to Y$, such that $D'$ is contained in the open subset where $\sigma$ is an isomorphism,
$X'$ and $Y$ are smooth and projective, $\rho_Y=\rho_{X}-1$, and 
$\ph$ is finite on $\sigma(D')$.
\end{enumerate}\end{proposition}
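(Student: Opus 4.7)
The plan is to apply the machinery of special MMPs in the simpler setting $c(D)=1$. I would pick a prime divisor $D\subset X$ with $c(D)=c_X=1$ and run a special MMP for $-D$ as in~\S\ref{basic}. Adapting the dichotomy of \cite[Lemma~2.7]{codim} to $c(D)=1$ gives two alternatives: \emph{type (a)}, with exactly one special index $i_1$ such that $\sigma_{i_1}$ is the blow-up of a smooth codimension-$2$ subvariety contained in the smooth locus, whose exceptional divisor lies in the isomorphism locus of $X_{i_1}\dasharrow X$; or \emph{type (b)}, where every $R_j\subset\N(D_j,X_j)$ and the final contraction $\ph\colon X_k\to Y$ satisfies $\ker\ph_*\not\subset\N(D_k,X_k)$. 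In type~(b), Lemma~\ref{min} applies verbatim to yield that $X_k$ and $Y$ are smooth and $\ph$ is a conic bundle.

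In case~(a), the transform $E\subset X$ of $\Exc(\sigma_{i_1})$ is an exceptional $\pr^1$-bundle, hence a fixed prime divisor. By the Mori dream space fact recalled in~\S\ref{notation}, there is a sequence of flips taking $E$ to the exceptional divisor of an elementary divisorial contraction; by Prop.~\ref{small}(i) this sequence $\phi\colon X\dasharrow X'$ may be taken $K$-negative, and $E$ sits in the isomorphism locus of $\phi$. The $\pr^1$-bundle structure on $E'\subset X'$ is inherited from $E$, so the fibers of the resulting divisorial contraction are all of dimension~$1$, and \cite[Th.~1.2]{wisn} forces the associated ray to be of type $(n-1,n-2)^{sm}$; this gives~(i). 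In case~(b), the map $\sigma\colon X\dasharrow X_k$ is a contracting birational map, so Prop.~\ref{small}(i) factors it as a sequence of $K$-negative flips and divisorial contractions; together with the elementary conic bundle $\ph\colon X_k\to Y$, setting $X':=X_k$ yields~(ii) with $\rho_Y=\rho_{X'}-1$.

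For the improved statement $(ii)'$, assume $X$ contains disjoint prime divisors $D,D'$. Then $[D']\in\N(D,X)^\perp$, and since $c(D)\leq 1$ and $[D']\neq 0$ we must have $c(D)=1$ with $[D']$ generating the one-dimensional subspace $\N(D,X)^\perp\subset\Nu(X)$. Running the special MMP for $-D$, in the type~(b) case I would track the transform $D'_j$ through each step using an analog of Lemma~\ref{flips} to show that $[D'_j]$ remains orthogonal to $\N(D_j,X_j)$, so in particular $D'_j\cdot R_j=0$. The key point is that if some $\sigma_j$ were divisorial, its exceptional divisor $G_j$ would satisfy $G_j\cdot R_j<0$ and $[G_j]\in\N(D_j,X_j)^\perp$; by uniqueness of the prime divisor in a one-dimensional face of $\Eff(X_j)$, this would force $G_j=D'_j$, contradicting $D'_j\cdot R_j=0$. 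Hence every $\sigma_j$ is a flip, $D'$ lies in the isomorphism locus of $\sigma$, and $\rho_Y=\rho_X-1$. Finally, finiteness of $\ph$ on $\sigma(D')$ follows since $[\ell]\notin\N(D_k,X_k)$ while $[\sigma(D')]\in\N(D_k,X_k)^\perp$, giving $\sigma(D')\cdot\ell>0$.

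The main obstacle is the argument in $(ii)'$ excluding divisorial contractions: it requires combining the perpendicularity $[D']\in\N(D,X)^\perp$ with a careful transport of this relation through flips (à la Lemma~\ref{flips}) and with the uniqueness of prime divisors in faces of the effective cone on the Mori dream space $X_j$. A subtler technical point is verifying inductively that $D'_j$ never meets the flipping locus, so that its numerical class is tracked coherently along the MMP.
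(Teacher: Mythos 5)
Your argument for part $(i)$ has a genuine gap. The sequence of flips supplied by the ``fixed divisor'' characterization recalled in \ref{notation} is obtained by running a MMP for $E$ itself: every flipped ray $R$ in that sequence satisfies $E\cdot R<0$, hence $\Lo(R)$ is contained in the (transform of the) divisor $E$. So unless that sequence is empty, $E$ is precisely \emph{not} contained in the open subset where $\phi$ is an isomorphism; your assertion to the contrary is unjustified and in general false. The paper uses a different $\phi$, namely the initial segment $\sigma_{i_1-1}\circ\cdots\circ\sigma_1$ of the special MMP for $-D$ (whose steps are $D$-\emph{positive}), which is an isomorphism around $E$ by \cite[Lemma~2.7]{codim}; the real content of case $(i)$ is then to prove that this segment contains no divisorial contraction. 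That step uses $c_X=1$ essentially: if some $\sigma_j$ with $j<i_1$ were divisorial, one shows $\N(\Exc(\sigma_j),X_j)\subseteq\N(D_j,X_j)$, transports this back to get $\N(G_1,X)=\N(D,X)$ (equality forced by $c_X=1$), and contradicts $G_1\cap E=\emptyset$ against $\N(D,X)\not\subseteq E^{\perp}$. Nothing in your write-up replaces this argument. (A smaller point: your claim that the resulting contraction is of type $(n-1,n-2)^{sm}$ presupposes that its nontrivial fibers are the fibers $e'$ of the $\pr^1$-bundle, which also needs justification.)

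The pivotal step of your argument for $(ii)'$ is also incorrect. In type $(b)$ one has $R_j\subset\N(D_j,X_j)$ for every $j$; since $G_j\cdot R_j<0$, the class $[G_j]$ cannot lie in $\N(D_j,X_j)^{\perp}$, so the two properties you attribute to $G_j$ contradict each other, and the conclusion $G_j=D'_j$ neither follows nor can hold (as $D'_j\cdot R_j=0\neq G_j\cdot R_j$). The paper argues differently: after establishing $D'_i\cap\bigl(D_i\cup\Lo(R_i)\bigr)=\emptyset$ for all $i$, it transports the exceptional divisor back to a divisor $B\subset X$ with $D'\cap(D\cup B)=\emptyset$, deduces from $c_X=1$ that $D^{\perp}=B^{\perp}$ in $\N(X)$, i.e.\ that $[B]$ and $[D]$ are proportional in $\Nu(X)$, and derives a contradiction from the fact that the fixed divisor $B$ is the unique prime divisor with class on $\R_{\geq 0}[B]$. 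Your concluding finiteness claim is also too quick: $\sigma(D')\cdot\ell>0$ alone does not prevent a curve of class in $R_k$ from lying inside $\sigma(D')$; what is needed is $R_k\not\subset\N(D'_k,X_k)$, which the paper obtains from $\N(D'_k,X_k)\subseteq D_k^{\perp}$ together with $D_k\cdot R_k>0$. The unprimed type-$(b)$ case, giving $(ii)$ via Lemma \ref{min}, is fine and agrees with the paper.
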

\begin{proof}
Let $D\subset X$ be a prime divisor with $c(D)=1$, and consider a special MMP for $-D$ as described in \ref{basic}.

Suppose that the MMP is of type $(a)$. We keep the same notation as in \ref{basic}.

By \cite[Lemma 2.7]{codim}, 
there is a special index $i_1\in\{1,\dotsc,k-1\}$ such that:
 $R_i\subset\N(D_i,X_i)$ for every $i\in\{1,\dotsc,k\}\smallsetminus\{i_1\}$,
 $R_{i_1}\not\subset\N(D_{i_1},X_{i_1})$, and $R_{i_1}$ is of type $(n-1,n-2)^{sm}$.
 Moreover, $\Lo(R_{i_1})$ is contained in the open subset where the birational map $X_{i_1}\dasharrow X$ is an isomorphism, and its transform $E\subset X$ is an exceptional $\pr^1$-bundle.

We set $X':=X_{i_1}$ and $\phi:=\sigma_{i_1-1}\circ\cdots\circ \sigma_1\colon X \dasharrow X'$. In order to get $(i)$, we just have
to show that $\sigma_j$ is a flip for every $j<i_1$.

By contradiction, let $j\in\{1,\dotsc,i_1-1\}$ be the first index such that $\sigma_j$ is a divisorial contraction. 
Since $D_j\cdot R_j>0$, we have $\sigma_j(\Exc(\sigma_j))=\sigma_j(D_j\cap\Exc(\sigma_j))$ and hence 
$(\sigma_j)_*(\N(\Exc(\sigma_j),X_j))=(\sigma_j)_*(\N(D_j\cap\Exc(\sigma_j),X_j))$.
On the other hand $\ker(\sigma_j)_*=\R R_j\subseteq \N(D_j,X_j)$, thus
$$\N(\Exc(\sigma_j),X_j)=\R R_j+\N(D_j\cap\Exc(\sigma_j),X_j)\subseteq\N(D_j,X_j).$$

 We show that:
$$\N(G_i,X_i)\subseteq\N(D_i,X_i)\quad\text{ for every $i=1,\dotsc,j$,}$$ where 
$G_i\subset X_i$ is the transform of $\Exc(\sigma_j)\subset X_j$.
Indeed suppose that $\N(G_i,X_i)\subseteq\N(D_i,X_i)$ for some 
 $i\in\{2,\dotsc,j\}$, and consider 
 the diagram associated with the flip:
$$\xymatrix{{X_{i-1}}\ar@{-->}[rr]^{\sigma_{i-1}}\ar[dr]_{\zeta}&&{X_i}\ar[dl]^{\zeta'}\\
&Z&
}$$
We have 
$$\zeta_*(\N(G_{i-1},X_{i-1}))\subseteq \zeta_* (\N(D_{i-1},X_{i-1})).$$
Moreover $\ker\zeta_*=\R R_{i-1}\subseteq\N(D_{i-1},X_{i-1})$, which yields that  $\N(G_{i-1},X_{i-1})\subseteq\N(D_{i-1},X_{i-1})$.

In the end we get
$$\N(G_1,X)\subseteq\N(D,X)\subsetneq\N(X),$$
 and since $c(X)=1$, we conclude that $\N(G_1,X)=\N(D,X)$. However this is impossible, because
$G_1\cap E=\emptyset$ (because $E$ is contained in the locus where $\phi$ is an isomorphism), thus $\N(G_1,X)\subseteq E^{\perp}$. On the other hand $D\cap E\neq\emptyset$ and $D\neq E$, hence there are curves $C\subset D$ with $E\cdot C>0$, namely  $\N(D,X)\not\subseteq E^{\perp}$.

\medskip

Suppose now that the MMP is of type $(b)$. By
\cite[Lemmas 2.7 and 2.8]{codim} we have 
 $R_i\subset\N(D_i,X_i)$ for every $i\in\{1,\dotsc,k-1\}$,  $\dim Y=n-1$, and every fiber of 
$\ph$ has dimension one.
As in Lemma \ref{min}, we see that 
$X_k$ and $Y$ are smooth, and that $\ph\colon X_k\to Y$ is a conic bundle. Thus we set $X':=X_k$ and we have $(ii)$.

\medskip

Finally, suppose that there is a prime divisor $D'$ disjoint from $D$, and let
$D'_i\subset X_i$ be the transform of $D'$, for every $i=1,\dotsc,k$.

We show that: 
\stepcounter{thm}
\begin{equation}\label{roma}
D_i'\cap \bigl(D_i\cup \Lo(R_i)\bigr)=\emptyset\quad\text{ for all }i=1,\dotsc,k-1.
\end{equation}
Indeed suppose that $D_i\cap D'_i=\emptyset$ (which holds for $i=1$). Then we have $\N(D_i,X_i)\subseteq (D_i')^{\perp}$, and conversely  $\N(D'_i,X_i)\subseteq D_i^{\perp}$. 

Since $D_i\cdot R_i>0$ and $R_i\subset \N(D_i,X_i)$, we deduce that  $R_i\not\subset\N(D_i',X_i)$  and that  $D_i'\cdot R_i=0$, which yields
$$\Lo(R_i)\cap D_i'=\emptyset\quad\text{and}\quad D_{i+1}\cap D_{i+1}'=\emptyset.$$
This gives \eqref{roma}. Similarly, we deduce that $D_k\cap D_k'=\emptyset$ and that $R_k\not\subset\N(D_k',X_k)$.

In particular, $D'$ is contained in the open subset where $\sigma\colon X\dasharrow X_k$ is an isomorphism, and $\ph$ is finite on $\sigma(D')\subset X_k$.

Moreover, if $\sigma_j$ is divisorial for some $j\in\{1,\dotsc,k-1\}$, let $B\subset X$ be the transform of $\Exc(\sigma_j)$. Then $D'\cap (D\cup B)=\emptyset$, thus 
$$\N(D',X)\subseteq D^{\perp}\cap B^{\perp}.$$
Since $c_X=1$, we have $\dim\N(D',X)\geq\rho_X-1$, which implies that $D^{\perp}= B^{\perp}$, namely the divisors $B$ and $D$ are numerically proportional. However this is impossible, because since $B$ is a fixed divisor, it is the unique prime divisor having class in the half-line $\R_{\geq 0}[B]\subset\Nu(X)$ (see \ref{notation}). Therefore $\sigma_i$ is a flip for every $i=1,\dotsc,k-1$, and we have $(ii)'$.
\end{proof}
\begin{parg}[The case $c_X=2$ and $\rho_X=3$]
If $X$ is a Fano variety with $c_X=2$, we have $\rho_X\geq 3$. Fano manifolds with $c_X=2$ and $\rho_X=3$ are described in \cite[Th.~3.5]{minimal}; in
particular $X$ always satisfies Th.~\ref{main2}$(i)$ with $X=X'$ (without flips), and $X''$ is the projectivization of a decomposable rank $2$ vector bundle over $Y$.
\end{parg}
\begin{parg}[Small dimensions]
If $X$ is a Del Pezzo surface, then $c_X=\rho_X-1$.

Let $X$ be a Fano $3$-fold. If $\rho_X\geq 4$, then $c_X=\rho_X-2$ by \cite[Lemma 5.1]{gloria} and \cite[Lemma 3.1 and references therein]{minimal}.

Therefore Fano $3$-folds with $c_X=2$ have $\rho_X\in\{3,4\}$;  it follows from Mori and Mukai's classification that they are all
conic bundles over $\pr^2$, $\pr^1\times\pr^1$, or $\mathbb{F}_1$ (see for instance \cite[Th.~3.5]{minimal} for the case $\rho_X=3$, \cite[Th.~7.1.15 and Theorem on p.~141]{fanoEMS} and references therein for the case $\rho_X=4$).

In dimension $4$, a (slightly weaker) version of Th.~\ref{main} has been proved in \cite{eff,cdue}. Using Th.~\ref{main}, one can show that Fano $4$-folds with $c_X=2$ have
$\rho_X\leq 12$, see \cite[Th.~1.2]{cdue}.
\end{parg}
\begin{parg}[Toric Fano manifolds]
A toric version of Th.~\ref{codim}, Th.~\ref{main}, and Prop.~\ref{codimone} 
 was already present in \cite[Th.~3.4]{fano}. Even if the techniques used in the toric case are completely combinatorial,
they rely on a ``basic construction'' \cite[p.~1478]{fano} which can be easily translated geometrically in a special MMP for a torus-invariant prime divisor $D\subset X$. This special MMP is very explicit, and allows to say more.
\begin{proposition}
Let $X$ be a smooth toric Fano variety with $c_X=2$. 
Then there is a contraction $g\colon X\to Y_0$, with general fiber a smooth rational curve, where $Y_0$ is a Gorenstein toric Fano variety, with canonical singularities, and having a smooth crepant resolution $Y\to Y_0$ with $\rho_Y=\rho_X-2$.
\end{proposition}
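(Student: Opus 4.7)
The plan is to use the explicit combinatorial form of the basic construction from \cite[\S 3]{fano} applied to a torus-invariant prime divisor $D\subset X$ with $c(D)=c_X=2$, together with the fact that in the toric case every rational contraction between toric varieties is realized combinatorially by a projection of fans.

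By \cite[Th.~3.4]{fano}, or equivalently by Theorem~\ref{main2}$(i)$ combined with Prop.~\ref{conicbundle}, in the toric setting $X$ admits a rational toric $\pr^1$-fibration $\psi\colon X\dasharrow Y$ with $Y$ smooth toric and $\rho_X-\rho_Y=2$. Combinatorially $\psi$ corresponds to the projection of $N_X$ along the rank-$2$ sublattice $N'$ spanned by the primitive generators of the rays associated with $E$ and $\wi{E}$ (see \ref{typeb}). I then define $Y_0$ to be the toric variety whose fan is the image of $\Sigma_X$ under this projection, so that $g\colon X\to Y_0$ is automatically a regular toric morphism; its general fibre is the smooth rational curve $\ell\equiv e+\wi{e}$ of \ref{typeb}, and the natural refinement of $\Sigma_{Y_0}$ recovering $\Sigma_Y$ yields a toric birational morphism $Y\to Y_0$ with $\rho_Y=\rho_X-2$.

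Next, I verify that $Y_0$ is Gorenstein Fano with canonical singularities and that $Y\to Y_0$ is crepant. Combinatorially, the rays of $\Sigma_{Y_0}$ are the images of the primitive generators of $\Sigma_X$ other than those contracted by $g$, and since $X$ is smooth toric Fano these primitive generators are the vertices of the reflexive polytope associated with $-K_X$. A direct combinatorial check, using that $-K_X\cdot\ell=2$ and the compatibility of the two contracted rays with the anticanonical polarization, shows that the projections are the vertices of a reflexive polytope in the quotient lattice; by the standard reflexive-polytope correspondence, $Y_0$ is then Gorenstein toric Fano with canonical singularities. Since the refinement producing $Y$ introduces no new rays, $Y\to Y_0$ is crepant, and $\rho_Y=\rho_{Y_0}=\rho_X-2$.

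The main obstacle is the reflexivity verification for the projected polytope: this uses in an essential way the explicit form of the basic construction in \cite{fano} and the precise position, within the anticanonical polytope of $X$, of the rays corresponding to $E$ and $\wi{E}$, which ensures that their quotient is compatible with reflexivity. All other assertions follow by direct toric computations with the projected fan.
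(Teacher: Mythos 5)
Your proposal has the right general flavour (everything should be visible combinatorially), but it contains a structural error and leaves the two genuinely hard points unproved. First, the combinatorics of the fibration is misidentified: the rational conic bundle $\psi\colon X\dasharrow Y$ has $\dim Y=n-1$, so the associated lattice projection $N_X\to N_Y$ has a \emph{rank-one} kernel. The rays of $E$ and $\wi{E}$ are not killed by this projection (they both map onto the ray of the divisor $Z=\psi(E)=\psi(\wi E)\subset Y$); the rays that must lie in the kernel are those of the two divisors dominating $Y$, and the whole construction hinges on arranging these to be a pair of \emph{opposite} rays $\alpha,-\alpha$. This is exactly what the paper secures by first producing a torus-invariant $D$ with $c(D)=2$ and then invoking \cite[Lemma 3.3]{fano} to replace $D$ so that $-\alpha$ is also a ray, giving the second section $D'$. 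Projecting along a rank-two sublattice spanned by the rays of $E$ and $\wi E$ would produce an $(n-2)$-dimensional base, which is not the statement being proved.

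Second, the two key assertions are asserted rather than proved. (a) The image of $\Sigma_X$ under the quotient map is \emph{not} automatically a fan, so $g$ is not ``automatically a regular toric morphism''; this is precisely where the paper works, showing via Reid's description of toric extremal rays (torus-invariant divisors meet extremal invariant curves in degree $\le 1$, and the primitive relation $v+(-v)=0$ forces $-K_X\cdot C=2$ when $D\cdot C=D'\cdot C=1$) that $-K_X-D-D'$ is nef, whence $g^*(-K_{Y_0})$ is nef and $g$ is regular. (b) The ``direct combinatorial check'' that the projected generators span a reflexive polytope is the same nefness statement in disguise and is not supplied; the paper instead derives it from $-K_Y=(-K_X-D')|_{D'}$ being nef and big and defines $Y_0$ as the image of the resulting crepant contraction $\beta\colon Y\to Y_0$. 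Finally, your claim that the refinement $Y\to Y_0$ introduces no new rays (hence $\rho_Y=\rho_{Y_0}$) is both unjustified and unnecessary: $\beta$ may well contract divisors, and the proposition only requires $\rho_Y=\rho_X-2$, which holds because $Y\cong D'$ is a torus-invariant divisor of the smooth toric Fano $X$ with $c_X=2$.
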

\begin{proof}
First of all we show that there exists a torus-invariant prime divisor $D\subset X$ with $c(D)=2$. 

Let us start with any prime divisor $B\subset X$ with $c(B)=2$, and run a special MMP for $-B$. If the MMP is of type $(a)$, then we get a fixed prime divisor $E\subset X$ such that $c(E)=2$ (see \ref{typea}). Then $E$ must be a torus-invariant divisor, and we set $D:=E$.

Suppose instead that the MMP is of type $(b)$. We keep the same notation as in \ref{typeb}.
Then $X_k$ is smooth and toric, and $\ph\colon X_k\to Y$ is a $\pr^1$-bundle with two disjoint torus-invariant sections $S_1$ and $S_2$. The transforms $\w{S}_i\subset X$ are disjoint torus-invariant divisors, and since  $\w{S}_i\cdot (e+\wi{e})=\w{S}_i\cdot\ell=1$, we can assume that $\w{S}_1\cap E=\emptyset$. Then $\w{S}_1$ is disjoint from $\w{S}_2$ and $E$, thus $c(\w{S}_1)=2$, and we set $D:=\w{S}_1$.

\medskip

Since $D$ is torus-invariant, it is itself a smooth, projective toric variety, and $\Nu(D)$ is generated by the classes of torus-invariant divisors of $D$. These are all given by restriction to $D$ of  torus-invariant divisors of $X$, so that the restriction map $\Nu(X)\to\Nu(D)$ is surjective. Hence
the map $\N(D)\to\N(X)$ is injective, and $\rho_D=\rho_X-2$. 

Let $\alpha$ be the  one-dimensional cone of the the fan of $X$ corresponding to $D$.
It follows from \cite[Lemma 3.3]{fano} that up to replacing $D$ with another torus-invariant prime divisor, we can assume that $-\alpha$ is again  a one-dimensional cone of the fan of $X$; let $D'\subset X$ be the torus-invariant prime divisor corresponding to $-\alpha$. Then $D\cap D'=\emptyset$, and it follows from
\cite[Lemma 3.3 and Th.~3.4(2)]{fano}
that there is a special MMP of type $(b)$ for $-D$:
$$\xymatrix{X  \ar@{-->}[r]^{\sigma}& {X_k}\ar[r]^{\ph}& Y
}$$
where $\ph$ is a $\pr^1$-bundle, $D'$ is contained in the open subset where $\sigma$ is an isomorphism, and $\sigma(D')$ is a section of $\ph$, so that $D'\cong Y$.

We remark that since $X$ is Fano, the divisors $-K_X-D'$ and $-K_X-D-D'$ are nef. Indeed, any extremal ray of $X$ contains the class of some irreducible torus-invariant curve $C\subset X$. It follows from Reid's description of toric Mori theory (see \emph{e.g.} \cite[\S 2 and references therein, in particular Th.~2.3]{fano}) that $B\cdot C\leq 1$ for every torus-invariant prime divisor $B\subset X$, so that $(-K_X-D')\cdot C\geq 0$.
Moreover, 
since $D$ and $D'$ correspond to opposite one-dimensional cones in the fan of $X$, if
$C$ has positive intersection with both $D$ and $D'$, one necessarily has $-K_X\cdot C=2$ and hence $(-K_X-D-D')\cdot C=0$. 
Indeed, if $D\cdot C=D'\cdot C=1$, then the  ``primitive relation''  (introduced by Batyrev) associated to the class $[C]$ is forced to be $v+(-v)=0$, where $v\in\alpha$ is a primitive generator; this yields $-K_X\cdot C=2$.
We refer the reader to \cite[\S 2]{fano} and references therein for more details.

Since $-K_{D'}=(-K_X-D')_{|D'}$ is nef, so is $-K_Y$. Moreover $-K_Y$ is also big, because $Y$ is toric.
Let $\beta\colon Y\to Y_0$ be the birational contraction induced by $-K_Y$, so that $Y_0$ is  a Gorenstein toric Fano variety, with canonical singularities, of dimension $n-1$, and $\beta$ is a crepant resolution.

Consider the  map $g:=\beta\circ\ph\circ\sigma\colon X\dasharrow Y_0$. One can check that $g^*(-K_{Y_0})=-K_X-D-D'$. Since $-K_X-D-D'$ is nef in $X$, $g$ is regular, and we have the statement.
\end{proof}
Besides the toric case, it would be interesting to understand under which conditions, in the setting of Th.~\ref{main}$(i)$, $Y$ has big and nef anticanonical divisor.
\end{parg}
\providecommand{\bysame}{\leavevmode\hbox to3em{\hrulefill}\thinspace}
\providecommand{\MR}{\relax\ifhmode\unskip\space\fi MR }
\providecommand{\MRhref}[2]{%
  \href{http://www.ams.org/mathscinet-getitem?mr=#1}{#2}
}
\providecommand{\href}[2]{#2}

\end{document}